\renewcommand{\Re}{\mathop{\rm Re}\nolimits}
\renewcommand{\Im}{\mathop{\rm Im}\nolimits}
\newcommand{\p}{\partial}
\newcommand{\e}{\varepsilon}
\newcommand{\ri}{{\rightarrow}}
\newcommand{\C}{{\mathbb C}}
\newcommand{\R}{{\mathbb R}}
\newcommand{\N}{{\mathbb N}}
\newcommand{\la}{\lambda}
\newcommand{\ty}{\infty}
\newcommand{\te}{\Theta}
\newcommand{\om}{\omega}
\newcommand{\aA}{{\cal A}}
\newcommand{\BB}{{\cal B}}
\newcommand{\CC}{{\cal C}}
\newcommand{\DD}{{\cal D}}
\newcommand{\EE}{{\cal E}}
\newcommand{\GG}{{\cal G}}
\newcommand{\HH}{{\cal H}}
\newcommand{\UU}{{\cal U}}
\newcommand{\VV}{{\cal V}}
\newcommand{\lag}{\langle}
\newcommand{\rag}{\rangle}
\newcommand{\dd}{{\textup d}}
\theoremstyle{plain}
\newtheorem{theorem}{Theorem}[section]
\newtheorem{lemma}[theorem]{Lemma}
\newtheorem{proposition}[theorem]{Proposition}
\newtheorem{corollary}[theorem]{Corollary}
\newtheorem{definition}[theorem]{Definition}
\newtheorem{condition}[theorem]{Condition}
\theoremstyle{remark}
\newtheorem{remark}[theorem]{Remark}
\newcommand{\de}{\delta}
\numberwithin{equation}{section}
\begin{document}

\date{}
\title{Global exact controllability in
infinite time of    Schr\"odinger equation}

\date{}

\author{Vahagn Nersesyan\footnote{Laboratoire de
Math\'ematiques, UMR CNRS 8100, Universit\'e de
Versailles-Saint-Quentin-en-Yvelines, F-78035 Versailles,
France,
e-mail: Vahagn.Nersesyan@math.uvsq.fr}, Hayk
Nersisyan\footnote{Laboratoire de Math\'ematiques, UMR CNRS
8088,
  Universit\'e de Cergy-Pontoise,
 F-95000 Cergy-Pontoise, France, e-mail:
Hayk.Nersisyan@u-cergy.fr}}
 \maketitle

{\small\textbf{Abstract.} In this paper, we study the problem of controllability of Schr\"odinger equation. We prove that the system is   exactly controllable  in infinite time to  any position. The proof is based on an inverse mapping theorem for multivalued functions.   We show also that the system   is not exactly controllable  in finite time in lower Sobolev spaces.
  }\\

  {\small\textbf{R\'esum\'e.} Dans cet article, nous \'etudions le probl\`eme de contr\^olabilit\'e pour l'\'equation de Schr\"odinger. Nous montrons que le syst\`eme est   exactement contr\^olable en temps infini. La preuve est bas\'ee sur un th\'eor\`eme d'inversion locale pour des multifonctions.  Nous montrons aussi que le syst\`eme n'est pas exactement contr\^olable en temps fini dans les espaces de Sobolev d'ordre inf\'erieur.   }
  \\

  Keywords: Schr\"odinger equation, controllability, multivalued mapping, Kolmogorov
$\e$-entropy
  \tableofcontents

\section{Introduction}\label{S:intr}
The paper is devoted to the study of the following controlled Schr\"odinger equation
\begin{align}
i\dot z &= -\Delta z+V(x)z+ u(t) Q(x)z,\,\,\,\, \label{E:hav1}\\
z\arrowvert_{\partial D}&=0,\label{E:ep1}\\
 z(0,x)&=z_0(x).\label{E:sp1}
\end{align}  We assume that   space variable $x$ belongs to a rectangle  $D\subset\R^d$, $V,Q \in C^\ty(\overline{D},\R)$ are   given functions,  $u$ is the control,
and $z$ is the state. 
  We prove that the linearization of this system is
  exactly controllable in Sobolev spaces in infinite
time. Application of this result gives global exact controllability in infinite time in $H^3$ for $d=1$.
  We show also that   the system is not exactly
controllable in finite time in lower Sobolev spaces.

Let us recall some previous  results on the controllability problem of Schr\"odin\-ger equation.
  In \cite{BCH}, Beauchard  proves an
exact controllability result for the system with  $d=1, D=(-1,1 )$ and $Q(x)=x$
   in $H^7$-neighborhoods of the  eigenfunctions.  Beauchard and Coron
\cite{BeCo} established later a partial global exact
controllability result, showing that the system in question
  is also controlled between  
neighborhoods of   eigenfunctions. Recently,  Beauchard and  Laurent \cite{KBCL} simplified the proof of \cite{BCH} and generalized it to the case of the nonlinear equation.
The proofs of \cite{BCH, BeCo, KBCL} work also for the neighborhoods of finite linear combinations of eigenfunctions.  In the case of infinite linear combinations, these arguments do not work, since the linearized system does not verify the property of spectral gap (even if the problem is 1-D), hence the Ingham inequality cannot be applied.

Chambrion et al.
\cite{CH}, Privat, Sigalotti \cite{PISI}, and Mason,   Sigalotti  \cite{PMMS} prove that (\ref{E:hav1}),
(\ref{E:ep1})  is  approximately controllable in~$L^2$ generically with respect to   function    $Q$ and   domain $D$. In \cite{VaNe, VN},  the first author of this paper proves  a stabilization result and a property of global approximate controllability  to eigenstates   for Schr\"odinger equation. Combination  of these  results  
with the local exact controllability property obtained by Beauchard \cite{BCH} gives   global exact controllability   in finite time for $d=1$ in the spaces $H^{3+\e}, \e>0$.
  See also  
papers  \cite{RSDR,TR,ALT,ALAL,AC,BCMR}     for controllability of finite-dimensional
systems and   papers
\cite{L,MZ,BP,Z,DGL,Mir,PERV} for controllability properties of   various  Schr\"odinger systems.

In this article, we   study the properties of control system on the   time half-line $\R_+$ instead of a finite interval $[0,T]$, as in all above cited papers. We study the mapping, which  associates to initial condition $z_0$ and  control $u$ the $\omega$-limit set of the corresponding trajectory. We consider this mapping as a multivalued function in the phase space. We show that this multivalued function is differentiable with differential equal to the limit of the linearization of (\ref{E:hav1}), (\ref{E:ep1}), when time $t$ goes to infinity.  Observing  that the linearized system is controllable in infinite time at almost any point,  we conclude the controllability of the nonlinear system (in the case $d=1$), using an inverse mapping theorem for multivalued functions \cite{KNJP} by Nachi and  Penot.    Thus   (\ref{E:hav1}), (\ref{E:ep1}) is exactly controllable near   any point in the phase space, hence globally. The controllability of   the linearized system is proved for any $d\ge1$, but this result is not directly applicable  to the study of the nonlinear system with $d\ge 2$.  We have a loss of regularity: the solution of the nonlinear problem exists for more regular controls than the ones   used to control the linear problem. The multidimensional case is treated in our forthcoming paper.

To our knowledge, the inverse mapping theorem for multivalued functions was never used before in the theory of control of PDEs. Our proof does not rely on the particular asymptotics of the eigenvalues of Dirichlet Laplacian, so it is likely to  work in     other    settings. Considering the problem in infinite time enables us   to prove the controllability of the linearized system in the case of any space dimension $d\ge1$, even when the gap condition is not verified for the eigenvalues (which is the case for $d\ge 3$).

In the second part of the paper, we study the problem of non-controllability for  (\ref{E:hav1}), (\ref{E:ep1}) in finite time. We prove that the system is not exactly controllable in finite time in the spaces $H^k$ with $k\in (0,d)$. Let us recall that previously 
  Ball,  Marsden and Slemrod
\cite{BMS} and Turinici \cite{GBT} have shown that  the problem is not controllable in the space    $  H^2$. Our result is inspired by the paper \cite{SHEU}  of Shirikyan, where the non-controllability of 2D Euler equation is established. More precisely, it is proved in \cite{SHEU}  that, if  the Euler system is controlled by finite dimensional external force, then   the set of all reachable points in a given time $T>0$ cannot cover a ball in the phase space. Later this result was generalized by the second author of the present paper, in \cite{HANE}: in the case of 3D Euler equation it is proved that the union of all sets of reachable points at all times $T>0$ also does not cover a ball.

Using ideas  of Shirikyan, we prove  that the image by the resolving operator   of a ball in the space of   controls  has a Kolmogorov $\e$-entropy strictly less than that  
of a ball   in the phase space $H^k$. This implies the non-controllability.

\vspace{10pt}

\textbf{Acknowledgments.} The authors would like to thank Armen Shirikyan for many fruitful conversations.

\vspace{22pt} \textbf{Notation}\\\\ In this paper, we use the
following notation. 
Let \begin{align*}
\ell^2&:=\{\{a_j\}\in \C^\ty: \|\{a_j\}\|_{\ell^2}^2=\sum_{j=1}^{+\ty} |a_j|^2<+\ty\}\\
\ell^2_0&:=\{\{a_j\}\in \ell^2: a_1\in \R\}.
\end{align*}  We denote by  $H^s:=H^s(D)$    the Sobolev space of order $s\ge0$.
Consider the Schr\"odinger operator  $ -\Delta +V $, $V \in C^\ty(\overline{D},\R)$  with $ \DD(- \Delta +V):=H_0^1
\cap H^2 $. Let
$\{\la_{j,V} \}$ and $\{e_{j,V} \}$ be the sets of eigenvalues and
normalized eigenfunctions  of this operator.  Let $\langle\cdot,\cdot\rangle$ and~$\|\cdot\|$ be the scalar
product and the norm in the space $L^2 $. Define the space~$H^s_{(V)}:=D((-\Delta+V)^\frac{s}{2})$ endowed with the norm $~\|\cdot\|_{s,V}= \|(\la_{j,V})^\frac{s}{2} \langle\cdot,e_{j,V}\rangle\|_{\ell^2}$.
 When $D$ is the rectangle  $(0,1)^d $      and $V  (x_1,\ldots,x_d) =V_{1} (x_1)+\ldots+ V_{d} (x_d)  $, $V_k   \in C^\ty( [0,1],\R)$,   the eigenvalues and eigenfunctions of $ -\Delta +V $ on $D$ are of the form  
\begin{align}
\la_{j_1,\ldots, j_d,V}&=\la_{j_1,V_1}+\ldots+\la_{j_d,V_d},\label{E:j1}\\
e_{j_1,\ldots, j_d,V}(x_1,\ldots,x_d)&=e_{j_1,V_1}(x_1)\cdot\ldots\cdot e_{j_d,V_d}(x_d),\,\,\,\,\, (x_1,\ldots,x_d)\in D, \label{E:j2}
\end{align}where $\{\la_{j,V_k} \}$ and $\{e_{j,V_k} \}$ are the     eigenvalues and
  eigenfunctions  of operator $-\frac{\dd^2}{\dd x^2}+V_k$  on $(0,1)$.    
 Define the space
 \begin{equation}\label{E:ddsah}
 \VV:=\{z\in L^2:\| z \|_{\VV}^2:=\!\!\!\!\!\!\sum_{j_1,\ldots,j_d=1}^{+\ty}\!\!\!| (  {{j_1^3\cdot\ldots\cdot j_d^3}} \lag z, e_{{j_1,\ldots, j_d,V}}   \rag|^2<+\ty\} .\end{equation}    Notice that, in the case $d=1$, the space $\VV$ coincides with $H^3_{(V)}$. The eigenvalues and eigenfunctions of Dirichlet Laplacian on the interval  $(0,1)$ are   $\la_{k,0}=   k^2\pi   ^2$ and $e_{k,0} (x)=\sqrt {2}  \sin  ( {k\pi} x  ) $, $x\in(0,1)$.  It is well known that for any $V\in L^2([0,1],\R)$
 \begin{align}
& \la_{k,V}=k^2\pi^2+\int_0^1 V(x)\dd x+r_k,\label{E:app1}\\
 & \|e_{k,V}-e_{k,0}\|_{L^\ty}\le \frac{C}{k},\label{E:app2}\\
 &  \Big\|\frac{\dd e_{k,V}}{\dd x}-\frac{\dd e_{k,0}}{\dd x} \Big\|_{L^\ty}\le C,\label{E:app3}
 \end{align} where $\sum_{k=1}^{+\ty}r_k^2<+\ty$ (e.g., see \cite{PT}).
   For a Banach space $X$, we shall denote by $B_X(a, r)$ the open ball of radius $r > 0$ centered at  $a\in X$. For a set $A$, we write $2^A$ for the set consisting of all subsets of $ A$. We denote by $C$ a constant whose value may change from line to line.

 \section{Controllability of   linearized
system}\label{S:MR}

\subsection{Main result}\label{S:Gcayin}
In this section, we suppose that $d=1$ and $D=(0,1)$. 
For any $\tilde z \in H^3_{(V)}$, let $\UU_t(\tilde z ,0) $ be the solution of (\ref{E:hav1})-(\ref{E:sp1}) with $z_0=\tilde z$ and $u=0$. Clearly,
\begin{equation}\label{E:azh}\UU_t(\tilde z ,0) = \sum_{j=1}^{+\ty}e^{-i\la_{j,V}t} \lag \tilde z,e_{j,V}\rag e_{j,V}.\end{equation}

\begin{lemma}\label{L:mlk}
There is a sequence $T_n\ri+\ty$ such that for any $ \tilde z\in H^3_{(V)} $ we have  $  \UU_{T_n}(\tilde z ,0)\ri \tilde z $ in $ H^3_{(V)}$.
\end{lemma}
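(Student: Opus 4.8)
The plan is to exploit the explicit diagonal form of the free evolution in (\ref{E:azh}) and reduce the statement to a simultaneous recurrence property of the phases $e^{-i\la_{j,V}T}$. Writing $\tilde z=\sum_{j}\lag \tilde z,e_{j,V}\rag e_{j,V}$ and subtracting, I get
\[
\UU_{T}(\tilde z,0)-\tilde z=\sum_{j=1}^{+\ty}\big(e^{-i\la_{j,V}T}-1\big)\lag \tilde z,e_{j,V}\rag e_{j,V},
\]
so that, by the definition of the $H^3_{(V)}$-norm,
\[
\|\UU_{T}(\tilde z,0)-\tilde z\|_{3,V}^2=\sum_{j=1}^{+\ty}\la_{j,V}^3\,\big|e^{-i\la_{j,V}T}-1\big|^2\,|\lag \tilde z,e_{j,V}\rag|^2 .
\]
Since $|e^{-i\la_{j,V}T}-1|^2\le 4$, each summand is bounded by $4\la_{j,V}^3|\lag \tilde z,e_{j,V}\rag|^2$, which is summable precisely because $\tilde z\in H^3_{(V)}$. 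Hence it suffices to produce a single sequence $T_n\ri+\ty$, \emph{independent of} $\tilde z$, along which $e^{-i\la_{j,V}T_n}\ri1$ for every fixed $j$; dominated convergence in $j$ will then give $\|\UU_{T_n}(\tilde z,0)-\tilde z\|_{3,V}\ri0$ for each $\tilde z$ separately.

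The core of the argument is the construction of such $T_n$, and this is where the main work lies. Using $|e^{-i\theta}-1|\le \dist(\theta,2\pi\Z)$, it is enough to make the phases $\la_{j,V}T$ simultaneously close to $2\pi\Z$ for all $j$ up to a given index. Fix $N$ and consider the linear flow $t\mapsto (\la_{1,V}t,\ldots,\la_{N,V}t)\ \mathrm{mod}\ 2\pi\Z^N$ on the torus $\T^N$. The closure of this one-parameter subgroup is a compact connected subgroup of $\T^N$, hence a subtorus, containing the identity and on which the flow is minimal; equivalently, by Poincar\'e recurrence for this Haar-measure-preserving flow together with its translation invariance, the identity is a recurrent point. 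In either formulation, for every $\e>0$ and every $M>0$ there exists $T>M$ with $\dist(\la_{j,V}T,2\pi\Z)<\e$ for all $j\le N$.

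With this near-recurrence available for every $N$, I finish by a diagonal choice: for each $n\in\N$ pick $T_n>n$ such that $|e^{-i\la_{j,V}T_n}-1|<1/n$ for all $j\le n$. Then $T_n\ri+\ty$, the sequence is built without reference to any particular $\tilde z$, and for each fixed $j$ one has $e^{-i\la_{j,V}T_n}\ri1$ as soon as $n\ge j$. Feeding this into the dominated convergence step of the first paragraph yields $\UU_{T_n}(\tilde z,0)\ri\tilde z$ in $H^3_{(V)}$ for every $\tilde z\in H^3_{(V)}$, as required.

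I expect the recurrence step to be the only genuine obstacle; its notable feature is that it requires no gap or arithmetic hypothesis on the eigenvalues $\la_{j,V}$, only near-recurrence of the phases rather than equidistribution at a controlled rate, which is exactly why the same scheme is robust enough to be reused in higher dimensions where the spectral gap fails.
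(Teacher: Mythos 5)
Your proof is correct and takes essentially the same route as the paper: bound the tail of the $H^3_{(V)}$-expansion using $\tilde z\in H^3_{(V)}$, and drive the finitely many remaining phases $e^{-i\la_{j,V}T}$ back to $1$ by a simultaneous recurrence argument (the paper cites this recurrence as a known lemma from Katok--Hasselblatt, which is proved by exactly the compact-group/pigeonhole argument you sketch on $\T^N$). Your diagonal choice making $T_n$ explicitly independent of $\tilde z$, followed by dominated convergence, is a slightly more careful rendering of the uniformity that the paper's terse $\e/2+\e/2$ argument leaves implicit, but it is a refinement rather than a different method.
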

\begin{proof}The proof uses the following well known result (e.g., see \cite{KAHA}).
\begin{lemma}\label{L:AQ}
For any $\e>0$, $N\ge 1$ and   $\alpha_j\in\R$, $j=1,\ldots,N$, there is  $k\in \N$ such that 
$$\sum_{j=1}^N  |e^{i\alpha_jk}-1 |<\e.
$$
\end{lemma} Applying this lemma, we see that for any $\e>0$ and for sufficiently large  $N\ge 1$, we have
\begin{align}
\| \UU_k(\tilde z ,0)-\tilde z\|_{3,V}^2\le &\sum_{j\le N}   |e^{-i\la_{j,V}k}-1 |^2|\la_{j,V}^\frac{3}{2}\lag \tilde z,e_{j_1,\ldots,j_d,V}\rag|^2\nonumber\\&+2 \sum_{j> N}    |\la_{j,V}^\frac{3}{2}\lag \tilde z,e_{j_1,\ldots,j_d,V}\rag|^2  \le \frac{\e}{2}+ \frac{\e}{2} =\e \nonumber
\end{align} for an appropriate choice of $k\in \N$. This proves Lemma \ref{L:mlk}.

\end{proof}
This subsection is devoted to the  study of the linearization of   (\ref{E:hav1}), (\ref{E:ep1}) around the trajectory $\UU_t(\tilde z ,0) $:
\begin{align}
i\dot z &= -\frac{\p^2z}{\p x^2} +V(x)z  + u(t) Q(x)\UU_t(\tilde z,0),\,\,\,\,
\label{E:hav2}\\
z\arrowvert_{\partial D}&=0,\label{E:ep2}\\
 z(0,x)&=z_0.\label{E:sp2}
\end{align}      Let $S$ be the unit sphere in $L^2$. For  $y\in S$, let $T_y$ be the   tangent space to $S$ at $y\in S$:
$$
T_{y}=\{z\in S: \Re\lag z, y\rag=0\}.
$$
\begin{lemma}\label{L:Linlav}
 For any $z_0\in T_{\tilde z}\cap H_{(0)}^2$ and $u\in L_{loc}^1(\R_+,\R)$,
  problem
(\ref{E:hav2})-(\ref{E:sp2}) has a unique solution   $ z\in
C(\R_+, H_{(0)}^2) $.
 Furthermore, if $R_t(\cdot, \cdot):
T_{\tilde z}\cap  H_{(0)}^2\times L^1([0,t],\R)\rightarrow
 H_{(0)}^2 $, $(z_0,u)\rightarrow z(t)$ is the resolving operator of the problem,
then
\begin{enumerate}
 \item[(i)] $R_t(z_0,u)\in T_{\UU_t(\tilde z,0)}$ for any
$t\ge0$,
 \item[(ii)] The operator $R_t$ is linear  continuous from
 $T_{\tilde z}\cap  H_{(0)}^2\times L^1([0,t],\R)$ to $ H_{(0)}^2 $.
       \end{enumerate}
  \end{lemma}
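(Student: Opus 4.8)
The plan is to solve (\ref{E:hav2})--(\ref{E:sp2}) by Duhamel's formula and then read off all three assertions directly from the explicit representation, rather than from a differential energy identity. Write $A=-\p^2/\p x^2+V$ for the Dirichlet operator, so that the free flow is $\UU_t(\cdot,0)=e^{-itA}$, a unitary group on $L^2$. Because $\UU_t$ acts in the basis $\{e_{j,V}\}$ by the unit-modulus factors $e^{-i\la_{j,V}t}$ (see (\ref{E:azh})), it preserves $\|\cdot\|_{2,V}$; and since $V$ is smooth and bounded, $\|\cdot\|_{2,V}$ is equivalent to the $H^2$ norm on $H^2_{(0)}=H^2\cap H_0^1$, so $\UU_t$ acts boundedly on $H^2_{(0)}$, uniformly in $t$. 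Duhamel's principle then suggests
\begin{equation*}
z(t)=\UU_t(z_0,0)-i\int_0^t \UU_{t-s}\bigl(u(s)\,Q\,\UU_s(\tilde z,0),0\bigr)\,\dd s,
\end{equation*}
which I would adopt as the definition of the candidate solution and then verify.

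The step I expect to be the main technical point is that the integrand genuinely lives in $H^2_{(0)}$, i.e. that multiplication by $Q$ preserves the Dirichlet boundary condition. Since $\UU_s(\tilde z,0)\in H^2_{(V)}$ vanishes on $\p D$ and $Q\in C^\ty(\overline D,\R)$, the product $Q\,\UU_s(\tilde z,0)$ vanishes on $\p D$ as well and lies in $H^2$, hence in $H^2_{(0)}$, with
\begin{equation*}
\|Q\,\UU_s(\tilde z,0)\|_{2,V}\le C\,\|\UU_s(\tilde z,0)\|_{2,V}=C\,\|\tilde z\|_{2,V}\le C\,\|\tilde z\|_{3,V}
\end{equation*}
uniformly in $s$, using that $\UU_s$ preserves $\|\cdot\|_{2,V}$. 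With $u\in L^1_{loc}(\R_+,\R)$ this makes $s\mapsto u(s)\,Q\,\UU_s(\tilde z,0)$ an element of $L^1_{loc}(\R_+,H^2_{(0)})$, so the integral converges in $H^2_{(0)}$ and $z\in C(\R_+,H^2_{(0)})$. Uniqueness is standard: two solutions differ by a solution of the homogeneous problem with zero data, which vanishes. Assertion (ii) is then immediate, since $z_0\mapsto\UU_t(z_0,0)$ and the integral in $u$ are linear, and the displayed bound yields
\begin{equation*}
\|R_t(z_0,u)\|_{2,V}\le C\,\|z_0\|_{2,V}+C\,\|\tilde z\|_{3,V}\,\|u\|_{L^1([0,t])}.
\end{equation*}

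It remains to prove (i), that is $\Re\lag z(t),\UU_t(\tilde z,0)\rag=0$. Here the Duhamel representation is convenient, since it sidesteps any question of differentiability in $t$. Abbreviating $w(s)=\UU_s(\tilde z,0)$ and pairing the formula for $z(t)$ against $w(t)$, unitarity gives $\lag\UU_t(z_0,0),w(t)\rag=\lag z_0,\tilde z\rag$ and, for the integral term, $\lag\UU_{t-s}(Q w(s),0),w(t)\rag=\lag Q w(s),\UU_s(\tilde z,0)\rag=\lag Q w(s),w(s)\rag$, using $\UU_t=\UU_{t-s}\UU_s$. Therefore
\begin{equation*}
\lag z(t),w(t)\rag=\lag z_0,\tilde z\rag-i\int_0^t u(s)\,\lag Q w(s),w(s)\rag\,\dd s.
\end{equation*}
Since $Q$ is real, $\lag Q w(s),w(s)\rag=\int_D Q\,|w(s)|^2\,\dd x\in\R$, so the integral is real and the second term is purely imaginary; as $\Re\lag z_0,\tilde z\rag=0$ because $z_0\in T_{\tilde z}$, we conclude $\Re\lag z(t),w(t)\rag=0$, which is exactly $R_t(z_0,u)\in T_{\UU_t(\tilde z,0)}$. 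The only delicate ingredient in the whole argument is the boundary-condition-preserving multiplication by $Q$ noted above; everything else reduces to Duhamel's formula and the unitarity of the free flow.
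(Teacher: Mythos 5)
Your proof is correct, and it takes a genuinely different route from the paper's. The paper disposes of existence and assertion (ii) by citing the standard theory (Cazenave), and it proves (i) by a differential identity: it computes $\frac{\dd}{\dd t}\Re\lag R_t,\UU_t\rag$ directly from the equation, the skew-adjoint terms $i(\frac{\p^2}{\p x^2}-V)$ cancel in the real part, and the reality of $Q$ kills the contribution of the source term, so the quantity is conserved and vanishes at $t=0$. You instead construct the mild solution explicitly from the Duhamel formula---isolating the one real technical point, namely that multiplication by $Q$ maps $H^2_{(V)}$ into $H^2\cap H^1_0$ with a uniform bound---and you derive (i) by pairing the Duhamel representation against $w(t)=\UU_t(\tilde z,0)$, using unitarity and the group property $\UU_t=\UU_{t-s}\UU_s$ to reduce the integrand to $\lag Qw(s),w(s)\rag\in\R$, so that the source contributes a purely imaginary term. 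Both arguments rest on the same two ingredients (unitarity of the free flow and reality of $Q$), but yours is integral where the paper's is differential: it is more self-contained on the existence side, and for (i) it sidesteps the need to justify differentiating $t\mapsto\Re\lag R_t,\UU_t\rag$ when $u$ is merely $L^1_{loc}$ (that differentiation holds only almost everywhere, via $z\in W^{1,1}_{loc}(\R_+,L^2)$, a point the paper passes over silently). The modest price is that you must take the mild (Duhamel) solution as the notion of solution, which is the standard convention here and is exactly the form (\ref{E:lpo}) the paper adopts immediately after the lemma.
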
\begin{proof} 
  The proof of existence and (ii)  is standard (e.g., see \cite{CW}).    
   To prove (i), notice that
  \begin{align*}
  \frac{\dd}{\dd t}\Re \lag R_t , \UU_t \rag&= \Re \lag \dot R_t , \UU_t \rag+ \Re \lag R_t , \dot \UU_t \rag\\&= \Re \lag i(\frac{\p^2}{\p x^2}-V) R_t  -i u(t) Q(x)\UU_t 
 , \UU_t \rag + \Re \lag R_t , i(\frac{\p^2}{\p x^2}-V) \UU_t \rag \nonumber\\
 &=\Re \lag i(\frac{\p^2}{\p x^2}-V) R_t   
 , \UU_t \rag + \Re \lag R_t , i(\frac{\p^2}{\p x^2}-V) \UU_t \rag=0.
 \end{align*} Since $ \Re \lag R_0 , \UU_0 \rag=  \Re \lag z_0 , \tilde z \rag=0$, we get (i).
  \end{proof}
As (\ref{E:hav2})-(\ref{E:sp2}) is a linear control problem, the controllability of system with $z_0=0$ is equivalent to that   with any $z_0\in T_{\tilde z}$. Henceforth, we   take $z_0=0$ in   (\ref{E:sp2}).
Let us rewrite this problem in the Duhamel form
\begin{equation}\label{E:lpo}z(t)= -i\int_0^t S(t-s) u(s) Q(x)\UU_s(\tilde z,0)\dd s,
\end{equation} where $S(t)=e^{it(\frac{\p^2}{\p x^2}-V)}$ is the free evolution.
Using (\ref{E:azh}) and (\ref{E:lpo}), we obtain
\begin{equation}\label{E:gcz}
\lag z(t), e_{m,V}\rag=-i \sum_ {k=1} ^{+\ty} e^{-i\la_{m,V}t}\lag\tilde z,e_{k,V}\rag Q_{mk} 
\int^t_0 e^{i\om_{mk} s} u(s)\dd s,\,m\ge1,
\end{equation}where $\om _{mk}=\la_m-\la_k$ and  $Q_{mk}:= \langle
Qe_{m,V },e_{k,V }\rangle $. Let $T_n\rightarrow+\ty$ be the sequence in Lemma \ref{L:mlk}.
Then  
$e^{-i\la_{m,V}T_n}\rightarrow 1$ as $n \rightarrow +\ty$. Let us take  $t=T_n$
in (\ref{E:gcz}) and pass to the limit as $n \rightarrow +\ty$. For any
$u\in L^1(\R_+,\R)$  the right-hand side has a limit.   Equality   (\ref{E:gcz}) 
 implies that  
  the following limit exists    in the $L^2$-weak sense 
\begin{equation}\label{E:eez}R_{\ty}(0,u):=\lim_{n\rightarrow +\ty} z(T_n)=\lim_{n\rightarrow
+\ty}
R_{T_n}(0,u).
\end{equation}  The choice of the sequence $T_n$ implies that
\begin{equation}\label{E:gcza}
\lag R_{\ty}(0,u), e_{m,V}\rag=-i \sum_ {k=1} ^{+\ty} \lag\tilde z,e_{k,V}\rag Q_{mk} 
\int^{+\ty}_0 e^{i\om_{mk} s} u(s)\dd s.
\end{equation}
 Moreover, $R_{\ty}(0,u) \in T_{\tilde z} $. Indeed, using (\ref{E:eez}) and  the convergence $  \UU_{T_n}(\tilde z ,0)\ri \tilde z $ in $ H^3_{(V)}$, we get 

$$
\Re\lag R_{\ty}(0,u),\tilde z\rag=\lim_{n\ri\ty} \Re\lag R_{T_n}(0,u),\UU_{T_n}(\tilde z,0)\rag=0,
$$ by property (i).

For any $u\in L^1(\R_+,\R)$,  denote by $\check u$ the inverse Fourier transform of the function obtained by extending $u$ as zero to $\R_-^*$:
\begin{equation}\label{E:eaz}
\check u(\om):= \int^{+\ty}_0 e^{i\om s} u(s)\dd s.
\end{equation} Define the following spaces   
\begin{align*}
 \tilde\ell^2&:=\{d=\{d_{mk}\} :    \|d\|_{\tilde\ell^2}^2 :=|d_{11}|^2+\sum_{m,k=1, m\neq k}^{+\ty}  |d_{mk}|^2  <+\ty, d_{mm}=d_{11} \\& \quad \quad \quad     \quad \quad \quad \quad \quad \quad \quad \quad \quad \quad\text{and $d_{mk}= {\overline d}_{km}$ for all $m,k\ge1$}   \},
 \\\BB&:=\{u\in L^2_{loc}(\R_+, \R): \|u\|_ \BB ^2:= \sum_{p=1}^{+\ty} p^2 \|u\|_{L^2([p-1,p])}^2<+\ty\},\\  \CC&: =\{u\in L^1(\R_+,\R): \{\check u (\om_{mk}) \}\in \tilde\ell^2\}  .
\end{align*} The set of admissible controls is the Banach space
$$
\Theta:= u\in  \BB\cap   \CC\cap H^s(\R_+,\R) $$
endowed with the norm
$\|u\|_\te:= \|u\|_{\BB}+\|u\|_{L^1}+\|\{\check u (\om_{mk})\}\|_{  \tilde\ell^2}+\|u\|_{H^s}$,
where $s\ge1$ is any fixed constant. Clearly, the space $  \te$ is nontrivial.
 The presence of the space $\BB$ in the definition of $\te$ is motivated by the application to the nonlinear control system that we give in Section \ref{S:NL} (this guarantees   that the trajectories of the nonlinear system with controls from $\BB$ are bounded in the phase space). The space $\CC$ in the definition of  $\Theta$ ensures that the operator $R_{\ty}(0,\cdot)$ takes its values in $H^3_{(V)}$.   

\begin{lemma}\label{L:gc1}
For any $\tilde z\in S \cap H^3_{(V)}  $, $R_{\ty}(0,\cdot)$ is linear continuous mapping from $\te$ to $
T_{\tilde z}\cap H^3_{(V)}$.
\end{lemma}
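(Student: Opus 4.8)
The plan is to read off linearity directly from the explicit formula (\ref{E:gcza}) — the map $u\mapsto\check u$ is linear and so is $\check u\mapsto\sum_k\lag\tilde z,e_{k,V}\rag Q_{mk}\check u(\om_{mk})$ — and to put all the effort into the continuity estimate $\|R_\ty(0,u)\|_{3,V}\le C\|u\|_\te$; membership in $T_{\tilde z}$ has already been checked above. Writing $z_k:=\lag\tilde z,e_{k,V}\rag$, I would start from
\[
\|R_\ty(0,u)\|_{3,V}^2=\sum_{m=1}^{+\ty}\la_{m,V}^3\Big|\sum_{k=1}^{+\ty}z_k\,Q_{mk}\,\check u(\om_{mk})\Big|^2
\]
and split the inner sum into its diagonal part $k=m$ and its off-diagonal part $k\neq m$. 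The diagonal part is harmless: since $\om_{mm}=0$ we have $\check u(0)=\int_0^{+\ty}u\,\dd s$, so $|\check u(0)|\le\|u\|_{L^1}$, and with $|Q_{mm}|\le\|Q\|_{L^\ty}$ and $\tilde z\in H^3_{(V)}$ it contributes at most $C\|Q\|_{L^\ty}^2\|u\|_{L^1}^2\|\tilde z\|_{3,V}^2$.

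The heart of the proof — and the main obstacle — is a decay estimate for the matrix elements $Q_{mk}$. Using the eigenvalue equation $(-\Delta+V)e_{j,V}=\la_{j,V}e_{j,V}$ and the self-adjointness of $-\Delta+V$ on $H_0^1\cap H^2$, I would derive the commutator identity
\[
\om_{mk}Q_{mk}=\lag[Q,-\Delta+V]e_{m,V},e_{k,V}\rag ,
\]
in which $[Q,-\Delta+V]=2Q'\p_x+Q''$ is a first-order differential operator with smooth coefficients. Iterating $N$ times replaces the right-hand side by the pairing of $e_{m,V},e_{k,V}$ against a differential operator of order $N$ with smooth coefficients, so that $|\om_{mk}^N Q_{mk}|\le C_N m^N$, the last bound following from $\|e_{m,V}^{(j)}\|\le Cm^{j}$ for $j\le N$ (a consequence of the eigenvalue equation together with (\ref{E:app2})--(\ref{E:app3})). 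Since (\ref{E:app1}) gives $|\om_{mk}|\ge c|m-k|(m+k)$ for $m\neq k$, this yields $|Q_{mk}|\le C_N|m-k|^{-N}$ for every $N$, and combined with the trivial bound $|Q_{mk}|\le\|Q\|_{L^\ty}$ it produces the key uniform estimate
\[
M:=\sup_{m\neq k}\Big(\frac{\la_{m,V}}{\la_{k,V}}\Big)^{3/2}|Q_{mk}|<+\ty .
\]
For $m\le k$ this is immediate because $(\la_{m,V}/\la_{k,V})^{3/2}\le1$ and $|Q_{mk}|\le\|Q\|_{L^\ty}$, while for $m>k$ the growth $(\la_{m,V}/\la_{k,V})^{3/2}\sim(m/k)^3$ is beaten by taking $N=3$, since $\sup_{m>k\ge1}\big(m/(k(m-k))\big)^{3}<+\ty$.

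With $M$ in hand I would handle the off-diagonal part by a weighted Cauchy--Schwarz inequality in $k$, splitting $z_kQ_{mk}\check u(\om_{mk})=(z_k\la_{k,V}^{3/2})\,(\la_{k,V}^{-3/2}Q_{mk}\check u(\om_{mk}))$:
\[
\la_{m,V}^3\Big|\sum_{k\neq m}z_kQ_{mk}\check u(\om_{mk})\Big|^2\le\|\tilde z\|_{3,V}^2\sum_{k\neq m}\Big(\frac{\la_{m,V}}{\la_{k,V}}\Big)^3|Q_{mk}|^2|\check u(\om_{mk})|^2\le M^2\|\tilde z\|_{3,V}^2\sum_{k\neq m}|\check u(\om_{mk})|^2 .
\]
Summing over $m$ and recognizing $\sum_{m\neq k}|\check u(\om_{mk})|^2\le\|\{\check u(\om_{mk})\}\|_{\tilde\ell^2}^2\le\|u\|_\te^2$ finishes the off-diagonal estimate; adding the diagonal bound gives $\|R_\ty(0,u)\|_{3,V}\le C(\|\tilde z\|_{3,V})\|u\|_\te$, which is the asserted continuity. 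Everything beyond the decay estimate for $Q_{mk}$ is linearity and two applications of Cauchy--Schwarz. Note that only the $L^1$ and $\tilde\ell^2$ components of $\|u\|_\te$ actually enter the argument — the spaces $\BB$ and $H^s$ are needed for the other results — so continuity from all of $\te$ follows a fortiori.
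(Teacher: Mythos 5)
Your Step-1 architecture (diagonal/off-diagonal split, weighted Cauchy--Schwarz in $k$, and the observation that only the $L^1$ and $\tilde\ell^2$ components of $\|\cdot\|_\te$ enter) is exactly the paper's, and your target bound $M<+\ty$ is precisely the paper's key estimate (\ref{E:rezag1}). The gap is in your proof of that bound. The first commutator identity $\om_{mk}Q_{mk}=\lag[Q,-\Delta+V]e_{m,V},e_{k,V}\rag$ is legitimate, because $Qe_{m,V}$ vanishes on $\p D$, so self-adjointness applies with no boundary contribution. But the iteration is not: $[Q,-\Delta+V]e_{m,V}=2Q'e_{m,V}'+Q''e_{m,V}$ does \emph{not} vanish at $x=0,1$ (since $e_{m,V}'$ does not), so at the second step, moving $-\Delta+V$ across the scalar product produces the boundary term $2Q'\,e_{m,V}'e_{k,V}'\big|_{x=0}^{x=1}$, of size $\sim mk$; this is not an error term but the \emph{dominant} contribution. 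Consequently your claimed decay $|Q_{mk}|\le C_N|m-k|^{-N}$ for every $N$ is false: the paper's own computation for $Q=x^2$, $V=0$ gives $\lag x^2e_{p,0},e_{j,0}\rag=\frac{(-1)^{p+j}8pj}{\pi^2(p^2-j^2)^2}$, which along $p-j=j_0$ fixed, $p\to+\ty$, tends to $\frac{2}{\pi^2j_0^2}\neq0$, so the decay in $|m-k|$ is exactly quadratic, not superpolynomial. Worse, superpolynomial decay in $|m-k|$ would make Condition \ref{C:p1}(i) (which demands $|Q_{pj}|\ge c\,(pj)^{-3}$) unsatisfiable, contradicting the genericity result of the Appendix and emptying the paper's main theorems.

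The repair is the paper's Step 2: stop after writing $\lag Qe_{k,V},e_{m,V}\rag=\la_{m,V}^{-2}\lag(-\p_x^2+V)(Qe_{k,V}),(-\p_x^2+V)e_{m,V}\rag$, integrate by parts keeping the boundary term $-2Q'e_{k,V}'e_{m,V}'\big|_{x=0}^{x=1}$ explicitly, and estimate all terms via (\ref{E:app1})--(\ref{E:app3}): the boundary term contributes $O(km/\la_{m,V}^2)$ and the interior terms $O(k^3m/\la_{m,V}^2)$, so $|Q_{mk}|\le Ck^3/m^3$, which is exactly (\ref{E:rezag1}) --- a borderline bound, just strong enough for your weighted Cauchy--Schwarz step, but with no room for the stronger decay you posited. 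Once (\ref{E:rezag1}) is established this way, the rest of your argument goes through verbatim.
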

\begin{proof}  \vspace{6pt}\textbf{Step 1.} Let us admit that for any $m,k\ge 1$  we have
\begin{align}\label{E:rezag1}
\Big|\frac{m ^3}{ k ^3} \langle Qe_{k ,V },e_{m ,V }\rangle\Big|\le C.
\end{align}
Then (\ref{E:app1}), (\ref{E:gcza}), (\ref{E:rezag1}) and the Schwarz inequality imply that
\begin{align*} 
\|   R_{\ty}(0,u)\|_{3,V}^2&\le C \sum_{m=1 }^{+\ty} |  {{m ^3 }} \lag R_{\ty}(0,u), e_{{m ,V}}   \rag|^2\\&\le C\!\!\!\sum_{m=1}^{+\ty}\!\! \Big|\! {{m^3}} \lag \tilde z, e_{{m,V}}   \rag\langle Qe_{m,V },e_{m,V }\!\rangle \!\!\int^{+\ty}_0\!\!\!\!  u(s)\dd s\Big|^2\!\\&\quad + C \!\| \tilde z\|_{3,V} ^2 \!\!\!\sum_{m,k=1, m\neq k }^{+\ty}\!\! \Big|\!\frac{m^3}{k^3}\! \langle Qe_{k,V },e_{m,V }\!\rangle \!\!\int^{+\ty}_0\!\!\!\! e^{i\om_{mk} s} u(s)\dd s\Big|^2\\&\le C\| \tilde z\|_{3,V} ^2 \|u\|_\te^2<+\ty.
\end{align*} 

 \vspace{6pt}\textbf{Step 2.} Let us prove (\ref{E:rezag1}). 
 Integration by parts gives
\begin{align*}
\lag Q e_{k,V} ,  e_{m,V} \rag=&\frac{1}{\la_{m,V}^2} \lag   (-\frac{\p^2}{\p x^2} +V) (Q e_{k,V}),  (-\frac{\p^2}{\p x^2} +V) (e_{m,V} ) \rag \nonumber\\=& \frac{1}{\la_{m,V}^2} (-2\frac{\p Q}{\p x}  \frac{\p e_{k,V}}{\p x}    \frac{\p e_{m,V} }{\p x} \Big|_{x=0}^{x=1} \nonumber\\&+ \lag   \frac{\p}{\p x}   (-\frac{\p^2}{\p x^2} +V) (Q e_{k,V}),   \frac{\p e_{m,V}}{\p x}  \rag\nonumber\\&+ \lag   (-\frac{\p^2}{\p x^2} +V) (Q e_{k,V}),   Ve_{m,V}  \rag).
\end{align*} 
 
 In view of (\ref{E:j1})-(\ref{E:app3}), this implies   (\ref{E:rezag1}).

\end{proof}

We prove the controllability of (\ref{E:hav2}),  (\ref{E:ep2}) under   below
condition with $d=1$. 

\begin{condition}\label{C:p1}Suppose that $D$ is the rectangle  $(0,1)^d $, $d\ge 1$    and  
the   functions  $V,Q \in C^\ty(\overline{D},\R)$ are such that
\begin{enumerate}
\item [(i)] $\inf_{p_1, j_1,\ldots,p_d,j_d\ge1}|{{(p_1j_1 \cdot\ldots\cdot p_dj_d)^3}}   Q_{pj}|\!>\!0,   \! Q_{pj}\!:=\!  \langle Qe_{p_1,\ldots,p_d,V },e_{j_1,\ldots,j_d,V }\rangle $,
   \item [(ii)] $\la_{i,V }-\la_{j,V }\neq \la_{p,V }-\la_{q ,V}$ for
all $i,j,p,q\ge1$ such that
  $\{i,j\}\neq\{p,q\}$ and $i\neq j$.

\end{enumerate}
\end{condition}
See Appendix for the proof of genericity of this condition. 
Let  us introduce the set   \begin{align*}
\EE\!:=\!\{z\in S\!:\!\exists p,q\ge 1,p\neq q,& z=c_pe_{p,V}+c_qe_{q,V}, \nonumber\\& |c_p |^2 \lag Q e_{p,V},e_{p,V}\rag\!-\!| c_q |^2 \lag Q e_{q,V},e_{q,V}\rag = 0  \}.\end{align*}  The following result is proved in next subsection.
\begin{theorem}\label{T:Lin} Under Condition  \ref{C:p1} with $d=1$, for any $\tilde z\in S \cap H^3_{(V)} \setminus \EE$, the mapping
$R_{\ty}(0,\cdot):\te\rightarrow
T_{\tilde z}\cap H^3_{(V)}$   admits a continuous
right inverse, where the space $T_{\tilde z}\cap H^3_{(V)}$ is endowed with the
norm of $H^3_{(V)}$. If $\tilde z\in S \cap H^3_{(V)}\cap \EE$, then $R_{\ty}(0,\cdot)$ is not invertible.
\end{theorem}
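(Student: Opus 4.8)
The plan is to factor $R_{\ty}(0,\cdot)$ through the moment sequence and invert the two factors separately. Writing $a_m:=\lag y,e_{m,V}\rag$ for the target coefficients and $b_m:=\lag \tilde z,e_{m,V}\rag$, formula (\ref{E:gcza}) shows that $R_{\ty}(0,u)=\LL(\MM u)$, where $\MM:\te\to\tilde\ell^2$, $\MM u:=\{\check u(\om_{mk})\}$, and $\LL:\tilde\ell^2\to H^3_{(V)}$ is the algebraic map sending $d=\{d_{mk}\}$ to the vector with $m$-th coefficient $-i\sum_k b_kQ_{mk}d_{mk}$. Here $\MM$ is bounded (its target norm is part of the norm of $\te$) and $\LL$ is bounded by the computation in the proof of Lemma \ref{L:gc1}. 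It therefore suffices to build bounded right inverses of $\LL$ and of $\MM$ and compose them.

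To right-invert $\LL$ when $\tilde z\notin\EE$, I would use a single reference index. Fix $r$ with $b_r\neq0$, set $d_{mk}=0$ unless $k=r$, $m=r$, or $m=k$, and keep the forced diagonal $d_{mm}=d_{11}$. Equation $m$ (for $m\neq r$) then reads $ia_m=b_mQ_{mm}d_{11}+b_rQ_{mr}d_{mr}$, which solves for $d_{mr}$; substituting $d_{rk}=\overline{d_{kr}}$ into equation $r$ gives, after multiplying by $\overline{b_r}$, the scalar relation $\kappa_r\,d_{11}=i\overline{b_r}a_r+i\sum_{k\neq r}b_k\overline{a_k}$ with $\kappa_r:=|b_r|^2Q_{rr}-\sum_{k\neq r}|b_k|^2Q_{kk}$. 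The right-hand side has vanishing imaginary part precisely because $\Re\lag y,\tilde z\rag=0$ (this is where the hypothesis $y\in T_{\tilde z}$ enters), so $d_{11}$ comes out real as required. The role of excluding $\EE$ is exactly that some admissible $r$ has $\kappa_r\neq0$: if $\kappa_r=0$ for every $r$ in the support of $\tilde z$ then $|b_r|^2Q_{rr}$ is constant there and equal to half of $\sum_k|b_k|^2Q_{kk}$, which, using $Q_{rr}\neq0$ from Condition \ref{C:p1}(i), forces the support to consist of two indices $p,q$ with $|c_p|^2Q_{pp}=|c_q|^2Q_{qq}$, i.e. $\tilde z\in\EE$. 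Finally the lower bound $|Q_{mr}|\ge c(mr)^{-3}$ from Condition \ref{C:p1}(i), together with $y,\tilde z\in H^3_{(V)}$, gives $\sum_m m^6|d_{mr}|^2\le C\big(\|y\|_{3,V}^2+|d_{11}|^2\|\tilde z\|_{3,V}^2\big)<\infty$, so $d\in\tilde\ell^2$ with $\|d\|_{\tilde\ell^2}\lesssim\|y\|_{3,V}$.

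The main obstacle is the right inverse of $\MM$: given $d\in\tilde\ell^2$ I must produce a real $u\in\te$ with $\check u(\om_{mk})=d_{mk}$, depending boundedly on $d$. The conjugation symmetry $d_{km}=\overline{d_{mk}}$ and $d_{mm}=d_{11}\in\R$ are exactly the constraints that make such a real $u$ possible, and Condition \ref{C:p1}(ii) guarantees that the frequencies $\{\om_{mk}\}_{m\neq k}$ are pairwise distinct up to sign. The difficulty is that these frequencies satisfy no spectral gap condition, so classical Ingham/Beurling biorthogonal constructions on a finite interval are unavailable; this is precisely where passing to the half-line pays off. I would exploit the infinite time horizon and the weighted structure of $\BB$ to realize the moments by a series $u=\sum_{m,k}d_{mk}\varphi_{mk}$ built from an almost-biorthogonal family adapted to $\{\om_{mk}\}$, correcting the residual values at the remaining frequencies over successive unit time windows, while tracking the weights $p^2$ in $\BB$ and the $H^s$-regularity to keep $\|u\|_\te\lesssim\|d\|_{\tilde\ell^2}$. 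Composing with the inverse of $\LL$ then yields the continuous right inverse of $R_{\ty}(0,\cdot)$.

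For the converse, suppose $\tilde z=c_pe_{p,V}+c_qe_{q,V}\in\EE$, so $|c_p|^2Q_{pp}=|c_q|^2Q_{qq}$. For any $u\in\te$ the only surviving terms in the $p$- and $q$-coefficients of $R_{\ty}(0,u)$ involve $d_{11}$ and $d_{pq}$, with $d_{qp}=\overline{d_{pq}}$; taking imaginary parts of $\overline{c_p}a_p$ and $\overline{c_q}a_q$ and subtracting, the $d_{pq}$-contributions cancel and the $d_{11}$-terms combine with coefficient $|c_p|^2Q_{pp}-|c_q|^2Q_{qq}=0$. Hence every element of the range satisfies $\Im(\overline{c_p}\lag y,e_{p,V}\rag)=\Im(\overline{c_q}\lag y,e_{q,V}\rag)$, a linear constraint independent of the tangency condition $\Re\lag y,\tilde z\rag=0$. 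Since $T_{\tilde z}\cap H^3_{(V)}$ contains elements violating it, $R_{\ty}(0,\cdot)$ is not onto, hence not invertible.
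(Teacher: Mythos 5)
Your factorization $R_\ty(0,\cdot)=\LL\circ\MM$, with $\MM u:=\{\check u(\om_{mk})\}$ and $\LL$ the algebraic map, is exactly the structure of the paper's proof, and your inversion of $\LL$ is correct and in fact tidier than the paper's: the paper proceeds by a three-case analysis (a single eigenmode, two modes, at least three modes, the last requiring an ad hoc auxiliary system in $C_{qp}, C_{qr}, C_{rp}$), whereas your single reference index $r$ with $\kappa_r\neq0$ --- whose existence you correctly show is equivalent to $\tilde z\notin\EE$ by summing the relations $2|b_r|^2Q_{rr}=\sum_k|b_k|^2Q_{kk}$ over the support --- handles all cases at once; the reality of $d_{11}$ via $\Re\lag y,\tilde z\rag=0$ is also verified correctly. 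Your non-invertibility argument for $\tilde z\in\EE$ is correct as well and is equivalent to the paper's conservation-law remark. (One minor slip: the bound you state for $\sum_m m^6|d_{mr}|^2$ cannot hold with only $y\in H^3_{(V)}$ --- the weight $m^6$ is already spent in compensating $1/|Q_{mr}|\le C(mr)^3$; what your estimate actually gives, and all that is needed for $d\in\tilde\ell^2$, is $\sum_m|d_{mr}|^2\le C(\|y\|_{3,V}^2+|d_{11}|^2\|\tilde z\|_{3,V}^2)$.)

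The genuine gap is at the point you yourself flag as the main obstacle: the right inverse of $\MM$, i.e., producing for each $d\in\tilde\ell^2$ a real $u\in\te$ with $\check u(\om_{mk})=d_{mk}$, linearly and with $\|u\|_\te\le C\|d\|_{\tilde\ell^2}$. This is precisely Proposition \ref{P:P1} of the paper and is the analytic core of Theorem \ref{T:Lin}; everything else is algebra. Your sketch (``an almost-biorthogonal family adapted to $\{\om_{mk}\}$, correcting the residual values \dots over successive unit time windows'') is not carried out, and it is doubtful it can be in that form: the absence of a spectral gap, which you correctly identify as the reason Ingham-type arguments fail, is equally an obstruction to constructing biorthogonal or almost-biorthogonal families with summable bounds. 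The paper sidesteps any such construction with a soft variational argument (Lemmas \ref{L:Min}, \ref{L:Min2} and \ref{L:Tza}): one minimizes $\|\{\check u(\om_m)\}-d\|_{\ell^2_0}^2$ over a ball $\overline{B_\te(0,\e)}$, obtains a minimizer by weak compactness --- here the $\BB$-weights are what make the tails of $\int_0^{+\ty}e^{i\om s}u(s)\,\dd s$ uniformly small, hence the moment map weakly continuous --- and then shows the minimum is zero because $\{\{\check u(\om_m)\}:u\in B_\te(0,\e)\}$ is dense in $\ell^2_0$, the density resting on a uniqueness theorem for sums $\sum_j c_je^{ir_js}$ (Lemma \ref{L:Tza}); the linear operator is then assembled by applying this to an orthonormal basis of $\ell^2_0$ with radii $\e=1/n$. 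Without this step, or a genuine substitute for it, your argument reduces Theorem \ref{T:Lin} to Proposition \ref{P:P1} but does not prove it.
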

\begin{remark}
The invertibility of the mapping $R_{T}(0,\cdot)$  with finite $T>0$ and $\tilde z=e_1$ is studied by Beauchard et al. \cite{BCKY}. They prove that for   space dimension $d\ge 3$ the  mapping  is not invertible.  By Beauchard \cite{BCH}, $R_{T}$ is invertible in the case $d=1$ and $\tilde z=e_1$. The case $d=2$ is open to our knowledge.\end{remark}
\begin{remark} Let us emphasize that the set $\{\om_{mk}\}$ does not verify the gap condition (even in the case $d=1$)
$$
\inf _{(m,k)\neq(m',k')}|\om_{mk}-\om_{m'k'}|>0.
$$ Thus one cannot prove exact controllability in finite time near points, which are not eigenfunctions, using arguments based on the Ingham inequality.

\end{remark}

\subsection{Proof of Theorem \ref{T:Lin}}

The proof of the theorem is based on the following proposition, which is proved in next subsection.
\begin{proposition}\label{P:P1} If the sequence $\omega_m\in \R, m\ge1$ is such that $\om_1=0$ and  $\sum_{m=2}^\ty\frac{1}{|\omega_m|^p}<+\ty$  for some $p\ge1$ and $\om_i\neq \om_j$ for $i\neq j$, then
there is a linear continuous operator $A$ from $ \ell^2_0$ to $  \te$ such that $\{ \check { A(d)} (\om_{m})\}=d$ for any $d\in  \ell^2_0 $. 
\end{proposition}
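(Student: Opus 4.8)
The plan is to produce $A$ as a synthesis operator against an explicit biorthogonal family. Concretely, I would first construct, for each $n\ge1$, a function $\phi_n\in\te$ with the interpolation property $\check\phi_n(\om_m)=\delta_{nm}$ for all $m\ge1$, and then set $A(d):=\sum_{n\ge1}d_n\phi_n$. Linearity is immediate, and once convergence in $\te$ is justified the identity $\{\check{A(d)}(\om_m)\}=d$ follows because $u\mapsto\{\check u(\om_{mk})\}$ is continuous on $\te$ (it is one of the terms of $\|\cdot\|_\te$), so $\check{A(d)}(\om_m)=\sum_n d_n\check\phi_n(\om_m)=d_m$. Thus the whole difficulty is to (a) build the $\phi_n$ and (b) prove that the synthesis map is bounded from $\ell^2_0$ into $\te$.

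Before constructing anything, I would simplify the target norm. Two of its four terms are essentially free. The term $\|\{\check u(\om_{mk})\}\|_{\tilde\ell^2}$ equals $\|d\|$ exactly by the interpolation property (here $\{\om_m\}$ is the relabelling of $\{\om_{mk}\}$ furnished by Condition \ref{C:p1}(ii)); and the $L^1$-term is dominated by the $\BB$-term, since
\[
\|u\|_{L^1}=\sum_{p\ge1}\|u\|_{L^1([p-1,p])}\le\sum_{p\ge1}\tfrac1p\big(p\|u\|_{L^2([p-1,p])}\big)\le\Big(\sum_{p\ge1}\tfrac1{p^2}\Big)^{1/2}\|u\|_\BB,
\]
using $\|u\|_{L^1([p-1,p])}\le\|u\|_{L^2([p-1,p])}$ and the Cauchy--Schwarz inequality. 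Hence it suffices to bound $\|A(d)\|_\BB$ and $\|A(d)\|_{H^s}$ by $C\|d\|$; both are Hilbert norms, so what I really need is that $\{\phi_n\}$ is a Bessel sequence for $\BB$ and for $H^s$.

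For the construction I would work on the Fourier--Laplace side, where $\check u$ is analytic and bounded in the open upper half-plane for $u\in L^1(\R_+)$. I would realise $\check\phi_n$ as a fixed rapidly decaying factor times an infinite product of elementary boundary factors,
\[
\check\phi_n(\om)=c_n\,(1-i\om)^{-N}\!\!\prod_{\pm,\,m\ne n}\frac{\om\mp\om_m}{\om\mp\om_m+i\delta_m},
\]
with $\delta_m>0$, $N$ large, and $c_n$ normalising $\check\phi_n(\om_n)=1$. Each factor has modulus $\le1$ on the closed upper half-plane and a single boundary zero, so the product vanishes at every $\om_m$, $m\ne n$ (and, through the $-$ factors, at the reflected points $-\om_m$, which lets me pass to $2\Re\phi_n$ and make $A(d)$ real-valued; this is consistent since $\om_1=0$ and $d_1\in\R$). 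The summability hypothesis enters exactly here: choosing $\delta_m$ with $\sum_m \delta_m/(1+|\om_m|)<\infty$ (possible since $\sum_m|\om_m|^{-p}<\infty$, e.g.\ $\delta_m=|\om_m|^{1-p}$) makes the product converge locally uniformly and stay nonzero away from the nodes, while the factor $(1-i\om)^{-N}$ supplies the decay $|\check\phi_n(\om)|\lesssim(1+|\om-\om_n|)^{-N}$ needed to place $\phi_n$ in $H^s$ (take $N>s+\tfrac12$) and, after inverse transforming, the time-localization needed for $\BB$.

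The main obstacle is step (b), the Bessel estimate, and it is forced by the absence of a spectral gap: as emphasised in the remark after Theorem \ref{T:Lin}, $\inf_{(m,k)\ne(m',k')}|\om_{mk}-\om_{m'k'}|=0$, so $\check\phi_n$ and $\check\phi_{n'}$ may concentrate near arbitrarily close frequencies and the family is far from orthogonal. This is precisely why Ingham-- or Haraux-type inequalities are unavailable. The decisive point is that $\sum_m|\om_m|^{-p}<\infty$ limits the clustering of $\{\om_m\}$, so that, combined with the polynomial decay $|\check\phi_n(\om)|\lesssim(1+|\om-\om_n|)^{-N}$, I can control the overlap sums and bound the Gram matrices $\langle\phi_n,\phi_{n'}\rangle_{\BB}$ and $\langle\phi_n,\phi_{n'}\rangle_{H^s}$ on $\ell^2$ by a Schur test, yielding the two Bessel bounds. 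I expect the bookkeeping of these overlaps --- tracking how the weights $p^2$ (for $\BB$) and $(1+\om^2)^s$ (for $H^s$) interact with the decay exponent $N$ and the summability exponent $p$ --- to be the technical heart of the argument, with the choice of $N=N(p,s)$ being what makes the estimates close.
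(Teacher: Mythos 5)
You take a genuinely different route from the paper, and it runs into an obstruction that cannot be repaired. The paper never constructs a biorthogonal family: for fixed $d\in\ell^2_0$ and $\e>0$ it minimizes $\HH(u)=\|\{\check u(\om_m)\}-d\|^2_{\ell^2_0}$ over the ball $\overline{B_\te(0,\e)}$ (Lemma~\ref{L:Min}), obtaining a minimizer by weak compactness and showing that the minimum vanishes by producing a descent direction; the existence of that direction is reduced to the density of the set $U=\{\{\check u(\om_m)\}:u\in B_\te(0,\e)\}$ (Lemma~\ref{L:Min2}), which is proved by a duality argument that, after $p$ integrations by parts, invokes the uniqueness Lemma~\ref{L:Tza} for almost periodic sums; the operator $A$ is then assembled from interpolating functions attached to an orthonormal basis of $\ell^2_0$. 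Note that in this scheme the hypothesis $\sum_m|\om_m|^{-p}<+\ty$ is used only to make the dual series absolutely convergent, never to separate frequencies. Your plan instead requires uniform quantitative control of individual interpolating functions, and that is exactly what the hypotheses cannot deliver.

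The gap is your sentence ``the decisive point is that $\sum_m|\om_m|^{-p}<\infty$ limits the clustering of $\{\om_m\}$'': it does not. That hypothesis constrains only the density of the frequencies at infinity, not their pairwise separation: the sequence $\om_1=0$, $\om_{2j}=j^2$, $\om_{2j+1}=j^2+e^{-j}$ satisfies every assumption of Proposition~\ref{P:P1} with $p=1$ while its gaps decay arbitrarily fast, and the frequencies $\{\om_{mk}\}$ to which the proposition is applied do cluster, as the paper itself stresses in the remark following Theorem~\ref{T:Lin}. For clustered frequencies the Bessel property is not ``bookkeeping''; it is false. A Bessel bound in $\BB$, tested on the canonical basis vectors of $\ell^2_0$, would give $\sup_n\|\phi_n\|_{\BB}\le C$, hence $\sup_n\|\phi_n\|_{L^1}\le C$ by your own reduction. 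But for any $u\in L^1(\R_+,\R)\cap\BB$ and $0<|\om-\om'|\le 1$, splitting the integral (\ref{E:eaz}) at $T=|\om-\om'|^{-2/3}$ and using the Cauchy--Schwarz inequality on each interval $[p-1,p]$,
\begin{align*}
|\check u(\om)-\check u(\om')|&\le |\om-\om'|\int_0^T s\,|u(s)|\,\dd s+2\int_T^{+\ty}|u(s)|\,\dd s\\
&\le |\om-\om'|\,T\,\|u\|_{L^1}+C\,T^{-1/2}\,\|u\|_{\BB}\le C\,|\om-\om'|^{1/3}\big(\|u\|_{L^1}+\|u\|_{\BB}\big),
\end{align*}
so the $p^2$-weights of $\BB$ impose a uniform H\"older-$1/3$ modulus of continuity on $\check u$. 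Biorthogonality forces $\check\phi_n$ to drop from $1$ at $\om_n$ to $0$ at the nearest node $\om_{n'}$, whence $1\le C|\om_n-\om_{n'}|^{1/3}$, absurd once gaps are small; concretely, your normalizing constants $c_n$ (and with them the Gram entries) blow up at least like the reciprocal of the nearest-neighbour gap, so no Schur test can close. This argument in fact shows more than the failure of your construction: any bounded right inverse $A$ maps the canonical basis vectors to a uniformly bounded biorthogonal family, hence forces the gap condition $\inf_{n\ne n'}|\om_n-\om_{n'}|\ge c\,\|A\|^{-3}>0$, which is neither implied by the hypotheses of Proposition~\ref{P:P1} nor satisfied in the application. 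The same tension affects the paper's own Lemma~\ref{L:Min}: when $\{\om_m\}$ enumerates $\{\om_{mk}\}$, one has $\|\{\check u(\om_m)\}\|_{\ell^2}=\|\{\check u(\om_{mk})\}\|_{\tilde\ell^2}\le\|u\|_\te$, so data of norm $1$ cannot be interpolated by controls of $\te$-norm $\e<1$. If a uniform gap were assumed, your Hardy-space product construction would be essentially the classical biorthogonal method and could plausibly be completed; without it, no argument based on bounded synthesis in the $\te$-norm can succeed.
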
The idea of the proof of Theorem \ref{T:Lin}  is to rewrite (\ref{E:gcza}) in the form $d_{mk} = \check u (\om_{mk}) $ with $d\:=\{d_{mk}\}\in \tilde\ell^2 $ and to apply the proposition. Notice that $\sum_{m,k=1, m\neq k}^\ty \frac{1}{\om_{mk}^4}<+\ty$ and  $\om_{ij}\neq \om_{pq}$ for
all $i,j,p,q\ge1$ such that
  $\{i,j\}\neq\{p,q\}$ and $i\neq j$.
  Let us take any $y\in  T_{\tilde z}\cap H^3_{(V)}$. Define \begin{equation*} d_{mk}:=\frac{i \lag y, e_{m}\rag \lag e_{k},\tilde z\rag -i\lag  e_{k},y\rag \lag \tilde z,e_{m}\rag}{ Q_{mk}  
}+C_{mk},
\end{equation*} where $C_{mk}\in\C$ and $e_k={e_{k,V}}$. The   fact that $\tilde z\in S$ implies  \begin{align*}
 -i \sum_ {k=1} ^{+\ty} \lag\tilde z,e_{k}\rag Q_{mk}  
d_{mk}&= \sum_ {k=1} ^{+\ty} \lag y, e_{m}\rag | \lag \tilde z, e_{k}\rag |^2  -\sum_ {k=1} ^{+\ty} \lag  e_{k},y\rag \lag \tilde z,e_{m}\rag \lag\tilde z,e_{k}\rag
\\&\quad-i \sum_ {k=1} ^{+\ty} \lag\tilde z,e_{k}\rag Q_{mk}  C_{mk}\\&= \lag y, e_{m}\rag-\lag \tilde z, e_{m}\rag \lag \tilde z, y\rag  -i \sum_ {k=1} ^{+\ty} \lag\tilde z,e_{k}\rag Q_{mk}  C_{mk}.
 \end{align*} By (\ref{E:gcza}), we have  $y=R_{\ty}(0,u)$, when 
 \begin{align} &  i  \sum_ {k=1} ^{+\ty}  \lag\tilde z,e_{k}\rag Q_{mk}  C_{mk}=-\lag \tilde z, e_{m}\rag \lag \tilde z, y\rag \label{E:C1}\end{align}
 for all $m\ge1$. Thus if we show that 
  there are $C_{mk}\in\C$ such that (\ref{E:C1}) is verified   and  $d\:=\{d_{mk}\}\in \tilde\ell^2 $, then 
   the proof of the theorem will be completed, in view of Proposition \ref{P:P1}. 
 Notice that, under Condition \ref{C:p1}, we have
 \begin{align*}
\sum_{m,k=1,m\neq k}^{+\ty}\Big| \frac{  \lag y, e_{m}\rag \lag e_{k},\tilde z\rag  }{ Q_{mk}  
} \Big |^2 &\le   C \| y\|_{3,V}^2\|\tilde z\|_{3,V}^2<+\ty.
\end{align*}
Thus  $ \{d_{mk}\}\in \tilde\ell^2 $, if   $C_{mk}\in\C$ are such that  \begin{align}  &d_{mm} =\frac{i \lag y, e_{m}\rag \lag e_{m},\tilde z\rag  -i\lag  e_{m},y\rag \lag \tilde z,e_{m}\rag}{ Q_{mm}  
}+C_{mm} =d_{0}, \label{E:C2}\\ &C_{mk} =\overline C_{km},\label{E:C3}\\ &\sum_{m,k=1, m\neq k}^{+\ty}   |C_{mk}|^2  <+\ty,\label{E:C4}
 \end{align} where $d_{0}\in\R$. Let us show that, for an appropriate choice of $d_0$, there are $C_{mk}$ satisfying (\ref{E:C1})-(\ref{E:C4}). Since $ y \in T_{\tilde z}$, we have $\lag \tilde z, y\rag =i \Im \lag \tilde z, y\rag $. We can rewrite (\ref{E:C1}) and (\ref{E:C2}) in the following form
 \begin{align} &     \sum_ {k=1} ^{+\ty}  \lag\tilde z,e_{k}\rag Q_{mk}  C_{mk}=-\lag \tilde z, e_{m}\rag \Im \lag \tilde z, y\rag,\label{E:C21}\\&d_{mm} =\frac{-2\Im( \lag y, e_{m}\rag \lag e_{m},\tilde z\rag)}{ Q_{mm}  
}+C_{mm} =d_{0}. \label{E:C22} 
 \end{align}
 
 \vspace{6pt}\textbf{Case 1.} Let as suppose that $\tilde z= ce_p$, where $c\in\C, |c|=1$ and $p\ge1$. Then (\ref{E:C3})-(\ref{E:C22}) is verified for $C_{mk}=0$,   if $m\neq k$ and $C_{mm} $ defined by  (\ref{E:C22}) with  $d_0=\frac{\Im \lag \tilde z,y\rag}{Q_{pp}}$.
 
 \vspace{6pt}\textbf{Case 2.} Suppose  $\tilde z= c_pe_p+c_qe_q$, where $c_p,c_q\in\C,  {|c_p|^2+|c_q|^2}=1$ and $p\neq q$. For any  $m\ge 1$, define $C_{mm} $   by  (\ref{E:C22}). If $m\neq p$, we  set  
 \begin{equation}\label{E:Cmp}
 C_{mp}:=\frac{-c_m(\Im \lag \tilde z, y\rag+Q_{mm}C_{mm})}{c_pQ_{mp}},
\end{equation}where $c_m=0$ for $m\neq q$, and $C_{mk}=0$ for any $k\ge 1$ such that $k\neq m, p$. Then all the equations in (\ref{E:C21}) are verified, excepted the case   $m=p$. 
Let us show that, for an appropriate choice of   $d_0\in \R$, this equation is also satisfied. Equation  (\ref{E:C21}) for   $m=p$ is
 $$
 c_pQ_{pp}C_{pp}+ c_qQ_{pq}C_{pq}=-c_p \Im \lag \tilde z, y\rag.
 $$Using (\ref{E:Cmp}) for $m=q$ (taking $C_{pq}=\overline{C}_{qp}$) and (\ref{E:C22}) for $m=p$, we get
\begin{align*}
-c_p \Im \lag \tilde z, y\rag=& c_pQ_{pp}\Big(d_0+\frac{2\Im( \lag y, e_{p}\rag \lag e_{p},\tilde z\rag)}{ Q_{pp}  
}\Big)\\&+ c_qQ_{pq}\overline{\Big(\frac{-c_q(\Im \lag \tilde z, y\rag+Q_{qq}C_{qq})}{c_pQ_{qp}}\Big)}\\=& c_pQ_{pp}\Big(d_0+\frac{2\Im( \lag y, e_{p}\rag \lag e_{p},\tilde z\rag)}{ Q_{pp}  
}\Big)\\&+ c_qQ_{pq} {\Big(\frac{-\overline c_q \Im \lag \tilde z,y\rag  }{\overline c_pQ_{qp}}\Big)}\\&+c_qQ_{pq} {\Big(\frac{-\overline c_q Q_{qq}C_{qq} }{\overline c_pQ_{qp}}\Big)}.
\end{align*}Now using (\ref{E:C22}) for  $m=q$, we rewrite this equality    in an equivalent form 
$$
(|c_p|^2 Q_{pp}-|c_q|^2 Q_{qq})d_0=A 
$$for some constant $A\in \R$. Thus if $\tilde z$ is such that $|c_p|^2 Q_{pp}-|c_q|^2 Q_{qq}\neq 0$, then we are able to find $C_{mk}$ satisfying (\ref{E:C3})-(\ref{E:C22}). 
If $|c_p|^2 Q_{pp}-|c_q|^2 Q_{qq}= 0$, then   linear system (\ref{E:hav2}), (\ref{E:ep2}) is not controllable, since for any $u\in \te$ and $t\ge 0$ we have
\begin{align*}
\frac{\dd}{\dd t}   &\Im \lag R_t( 0,u),  c_pe^{-i \la_pt}e_p-c_qe^{-i \la_qt}e _q\rag  \\=&   \Im \lag   i(\frac{\p^2}{\p x^2}-V) R_t(0,u) -iuQ(c_pe^{-i \la_pt}e_p+c_qe^{-i \la_qt}e _q), c_pe^{-i \la_pt}e_p-c_qe^{-i \la_qt}e _q\rag\\&+ \Im \lag      R_t(0,u) , i(\frac{\p^2}{\p x^2}-V)(c_pe^{-i \la_pt}e_p-c_qe^{-i \la_qt}e _q)\rag \\=&   \Im \lag   -iuQ(c_pe^{-i \la_pt}e_p+c_qe^{-i \la_qt}e _q), c_pe^{-i \la_pt}e_p-c_qe^{-i \la_qt}e _q\rag\\=&-u(|c_p|^2 Q_{pp}-|c_q|^2 Q_{qq})=0. \end{align*} 
 This non-controllability property is a remark of Beauchard and Coron \cite{BeCo}.

  \vspace{6pt}\textbf{Case 3.}  Here we suppose that    $\tilde z= \sum_{j=1}^{+\ty} c_je_j $ with $c_pc_qc_r\neq 0$, and $p,q,r$ are not equal to each other. If we define again $C_{mp}$, $m\neq p$ by (\ref{E:Cmp}) and $C_{mk}=0$ for any $k\ge1$ such that   $k\neq m,p$, then the arguments  of case 2 give the following equation for $d_0$
 $$
(|c_p|^2 Q_{pp}-\sum_{m\neq p}|c_m|^2 Q_{mm})d_0=\tilde A 
$$for some constant $\tilde A\in \R$. This implies that for any  $\tilde z$   such that 
$
 |c_p|^2 Q_{pp}-\sum_{m\neq p}|c_m|^2 Q_{mm}\neq 0,
$ we can find $C_{mk}$ satisfying (\ref{E:C3})-(\ref{E:C22}). 
Let us suppose that 
\begin{equation}\label{E:Lpq}
 |c_p|^2 Q_{pp}-\sum_{m\neq p}|c_m|^2 Q_{mm}=0.\end{equation}In this case, we define $C_{mp}$ by (\ref{E:Cmp}) only for integers  $m\ge1$ such that    $m\neq p,q,r$ and $C_{mk}=0$ for any $k\ge1$ such that   $k\neq m,p,q,r$. Then all the equations in (\ref{E:C21}) are verified, except for $m=p,q,r$.
We take any $ C_{qp}\in \C$ and choose $C_{qr}$ and $C_{rp}$   such that 
\begin{align}
 c_pQ_{rp}C_{rp}+ c_qQ_{rq}C_{rq}+c_pQ_{rr}C_{rr}&=-c_r \Im \lag \tilde z, y\rag,\label{E:A1}\\ c_pQ_{qp}C_{qp}+ c_qQ_{qq}C_{qq}+c_rQ_{qr}C_{qr}&=-c_q \Im \lag \tilde z, y\rag.\label{E:A2}
\end{align} 
Replacing the value of $C_{qr}$ from (\ref{E:A2}) into (\ref{E:A1}), then the value of $C_{pr}$ from  (\ref{E:A1}) into  (\ref{E:C21}) with $m=p$, and using (\ref{E:C3}), we get the following equation for $d_0$
 $$
(|c_p|^2 Q_{pp}+|c_q|^2 Q_{qq}-\sum_{m\neq p,q}|c_m|^2 Q_{mm})d_0= \tilde {\tilde A} 
$$for some constant $\tilde {\tilde A}\in \R$. Equality (\ref{E:Lpq}) implies that 
$$
 |c_p|^2 Q_{pp}+|c_q|^2 Q_{qq}-\sum_{m\neq p,q}|c_m|^2 Q_{mm} =0$$ if and only if $|c_q|^2 Q_{qq}=0$, which is not the case: $c_q\neq 0, Q_{qq}\neq 0$. Thus solution  $d_0\in \R$ exists, and the sequence $C_{mk}$ is constructed for any $\tilde z\notin \EE$.

\subsection{Multidimensional case}

In  this section, we suppose that $D$ is the rectangle  $(0,1)^d $, $d\ge 1$      and $V  (x_1,\ldots,x_d)$ $ =V_{1} (x_1)+\ldots+ V_{d} (x_d)  $, $V_k   \in C^\ty( [0,1],\R)$. 
This subsection is devoted to the  study of the linearization of   (\ref{E:hav1}), (\ref{E:ep1}) around the trajectory $\UU_t(\tilde z,0) $:
\begin{align}
i\dot z &= -\Delta z+V(x)z  + u(t) Q(x)\UU_t(\tilde  z ,0),\,\,\,\,
\label{E:hav2L}\\
z\arrowvert_{\partial D}&=0,\label{E:ep2L}\\
 z(0,x)&=z_0.\label{E:sp2L}
\end{align}    The proof of Theorem \ref{T:Lin} does not work in the multidimensional case for a general $\tilde z$. Indeed,  the well-known  asymptotic formula for  eigenvalues $\la_{k,V}~\sim~ C_dk^{\frac{2}{d}}$ implies that the frequencies $\omega_{mk}$ are dense in $\R$ for space dimension $d\ge 3$. Thus the moment problem $\check u (\om_{mk})=d_{mk}$ cannot be solved in the space $L^1(\R_+,\R)$ for a general  $d_{mk}\in\tilde\ell^2 $. The asymptotic formula for  eigenvalues  implies that the moment problem cannot be solved also in this case  $d=2$. Clearly, this does not imply the non-controllability of linearized system. Let us prove the controllability of (\ref{E:hav2L}), (\ref{E:ep2L}) for $\tilde z=e_{k,V}$. See our forthcoming publication
for the  case of a general $\tilde z$ and for an application to the nonlinear control problem.

For  $\tilde z=e_{k,V}$ the mapping $R_\ty(0,u)$ is given by 
$$\lag R_{\ty}(0,u), e_{m,V}\rag=-i    Q_{mk} 
\check u (\om_{mk})$$ (cf. \ref{E:gcza}).
\begin{lemma}\label{L:gc2}
The mapping  $R_{\ty}(0,\cdot)$ is linear continuous   from $\te$ to $
T_{e_{k,V}}\cap \VV$, where $\VV$ is defined by (\ref{E:ddsah}).
\end{lemma}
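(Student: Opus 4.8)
The plan is to reduce the statement to a single uniform bound on the matrix elements $Q_{mk}$, after which continuity into $\VV$ becomes a Schwarz-type estimate exactly as in Lemma \ref{L:gc1}. Throughout, $k=(k_1,\ldots,k_d)$ is fixed and $m=(m_1,\ldots,m_d)$ runs over multi-indices. Linearity of $R_\ty(0,\cdot)$ is immediate from the formula $\lag R_\ty(0,u),e_{m,V}\rag=-iQ_{mk}\check u(\om_{mk})$, and the inclusion $R_\ty(0,u)\in T_{e_{k,V}}$ follows exactly as in the one-dimensional case (from the multidimensional analogues of Lemma \ref{L:mlk} and of property (i) in Lemma \ref{L:Linlav}, both of which go through verbatim with $H^3_{(V)}$ replaced by $\VV$). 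Hence the heart of the matter is the continuity estimate into $\VV$.

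The key sub-estimate I would establish is the multidimensional analogue of (\ref{E:rezag1}): for the fixed $k$,
\begin{equation*}
(m_1\cdots m_d)^3\,|Q_{mk}|\le C_k\qquad\text{uniformly in }m.
\end{equation*}
To prove it I would write $Q_{mk}=\int_D g(x)\prod_{j=1}^d e_{m_j,V_j}(x_j)\,\dd x$ with $g:=Q\,e_{k,V}\in C^\ty(\overline D)$ fixed, and integrate by parts separately in each variable $x_j$. Because the one-dimensional operators $-\tfrac{\dd^2}{\dd x_j^2}+V_j$ act on different variables and commute, and because $e_{m_j,V_j}$ is an eigenfunction with $\la_{m_j,V_j}\sim m_j^2\pi^2$ by (\ref{E:app1}), repeating in each variable the two integrations by parts carried out in Step 2 of Lemma \ref{L:gc1} produces the prefactor $\prod_j\la_{m_j,V_j}^{-2}\sim\prod_j m_j^{-4}$, while the surviving boundary terms grow at most like $\prod_j m_j$ (the first derivatives $\tfrac{\dd e_{m_j,V_j}}{\dd x_j}$ at $x_j\in\{0,1\}$ being $O(m_j)$ by (\ref{E:app3})). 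This yields the decay $\prod_j m_j^{-3}$, with a constant controlled by finitely many mixed derivatives of $g$ up to order three in each variable together with the uniform bounds (\ref{E:app2})--(\ref{E:app3}); since $g$ is smooth, this constant is some finite $C_k$.

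Granting the sub-estimate, I would finish as in Step 1 of Lemma \ref{L:gc1}: using $\lag R_\ty(0,u),e_{m,V}\rag=-iQ_{mk}\check u(\om_{mk})$ and (\ref{E:ddsah}),
\begin{equation*}
\|R_\ty(0,u)\|_\VV^2=\sum_m (m_1\cdots m_d)^6\,|Q_{mk}|^2\,|\check u(\om_{mk})|^2\le C_k^2\sum_m|\check u(\om_{mk})|^2 .
\end{equation*}
Splitting off the diagonal term $m=k$, where $\om_{kk}=0$, the bound $|\check u(0)|=|\int_0^\ty u(s)\,\dd s|\le\|u\|_{L^1}$ together with $\sum_{m\neq k}|\check u(\om_{mk})|^2\le\|\{\check u(\om_{m'k'})\}\|_{\tilde\ell^2}^2$ (the right-hand norm ranging over all pairs, hence dominating the sum over our single fixed $k$) gives $\sum_m|\check u(\om_{mk})|^2\le C\|u\|_\te^2$ by the definition of $\te$. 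Therefore $\|R_\ty(0,u)\|_\VV\le C_k\|u\|_\te$, which is the asserted continuity.

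The step I expect to be the main obstacle is the sub-estimate: carrying the one-dimensional integration-by-parts computation through all $d$ variables generates many mixed boundary and interior terms (because $Q$ is a genuinely $d$-dimensional, non-product function), and one must check that every one of them is dominated by $\prod_j m_j^{-3}$ times a $k$-dependent constant. The bookkeeping is routine given (\ref{E:app1})--(\ref{E:app3}) and the smoothness of $Q$, but it is where all the work lies; the passage to the $\VV$-norm and the use of the definition of $\te$ are then immediate.
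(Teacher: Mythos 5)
Your proposal is correct and follows essentially the same route as the paper: the key point is the decay estimate on the matrix elements (the paper's (\ref{E:reza}), your $(m_1\cdots m_d)^3|Q_{mk}|\le C_k$), proved by two integrations by parts per variable with the boundary terms controlled through (\ref{E:app1})--(\ref{E:app3}), followed by the Schwarz-type estimate and the definition of $\te$. The only cosmetic difference is that the paper keeps $e_{k,V}$ explicit and obtains the bound uniformly in both $m$ and $k$ (in the ratio form $(k_1\cdots k_d)^3/(m_1\cdots m_d)^3$), whereas you absorb $e_{k,V}$ into a fixed smooth function and settle for a $k$-dependent constant, which suffices for the lemma as stated.
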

\begin{proof}  \vspace{6pt}\textbf{Step 1.} Let us admit that for any $m_j,k_j\ge 1$, $j=1,\ldots,d$ we have
\begin{align}\label{E:reza}
\Big|\frac{(m_1\cdot\ldots\cdot m_d)^3}{(k_1\cdot\ldots\cdot k_d)^3} \langle Qe_{k_1,\ldots,k_d,V },e_{m_1,\ldots,m_d,V }\rangle\Big|\le C.
\end{align}
Then (\ref{E:gcza}), (\ref{E:reza}) and the Schwarz inequality imply that
\begin{align*} 
&\|   R_{\ty}(0,u)\|_\VV^2=\sum_{m_1,\ldots,m_d=1}^{+\ty} |  {{m_1^3\cdot\ldots\cdot m_d^3}} \lag R_{\ty}(0,u), e_{{m_1,\ldots, m_d,V}}   \rag|^2\\&\le C\!\!\!\sum_{m=1}^{+\ty}\!\! \Big|\! {{m_1^3\cdot\ldots\cdot m_d^3}} \lag \tilde z, e_{{m_1,\ldots, m_d,V}}   \rag\langle Qe_{m,V },e_{m,V }\!\rangle \!\!\int^{+\ty}_0\!\!\!\!  u(s)\dd s\Big|^2\!\\&\quad+ C \!\| \tilde z\|_\VV ^2 \!\!\!\sum_{m,k=1, m\neq k }^{+\ty}\!\! \Big|\!\frac{(m_1\cdot\ldots\cdot m_d)^3}{(k_1\cdot\ldots\cdot k_d)^3}\! \langle Qe_{k_1,\ldots,k_d,V },e_{m_1,\ldots,m_d,V }\!\rangle \!\!\int^{+\ty}_0\!\!\!\! e^{i\om_{mk} s} u(s)\dd s\Big|^2\\&\le C\| \tilde z\|_\VV ^2 \|u\|_\te^2<+\ty.
\end{align*} 

 \vspace{6pt}\textbf{Step 2.} Let us prove (\ref{E:reza}). 
To simplify   notation, let us suppose that $d=2$; the proof of the general case is similar.   Let $V(x_1,x_2)=V_1(x_1)+V_2(x_2)$. Integration by parts gives
\begin{align*}
\lag Q e_{k_1,k_2,V} ,  e_{m_1,m_2,V} \rag=&\frac{1}{\la_{m_1,V_1}^2} \lag   (-\frac{\p^2}{\p x_1^2} +V_1) (Q e_{k_1,k_2,V}),  (-\frac{\p^2}{\p x_1^2} +V_1) (e_{m_1,m_2,V} ) \rag \nonumber\\=& \frac{1}{\la_{m_1,V_1}^2} (\int_0^1-2\frac{\p Q}{\p x_1}  \frac{\p e_{k_1,k_2,V}}{\p x_1}    \frac{\p e_{m_1,m_2,V} }{\p x_1} \Big|_{x_1=0}^{x_1=1}\dd x_2\nonumber\\&+ \lag   \frac{\p}{\p x_1}   (-\frac{\p^2}{\p x_1^2} +V_1) (Q e_{k_1,k_2,V}),   \frac{\p e_{m_1,m_2,V}}{\p x_1}  \rag\nonumber\\&+ \lag   (-\frac{\p^2}{\p x_1^2} +V_1) (Q e_{k_1,k_2,V}),   V_1e_{m_1,m_2,V}  \rag)\nonumber\\ =:&I_1+I_2+I_3.
\end{align*} 
Again integrating by parts, we get
\begin{align} 
I_1= &\frac{-2}{\la_{m_1,V_1}^2 \la_{m_2,V_2}^2} \!\!\int_0^1\!\!(-\frac{\p^2}{\p x_2^2} +V_2)  (\frac{\p Q}{\p x_1}  \frac{\p e_{k_1,k_2,V}}{\p x_1} )(-\frac{\p^2}{\p x_2^2} +V_2)    \frac{\p e_{m_1,m_2,V} }{\p x_1} \Big|_{x_1=0}^{x_1=1}\dd x_2\nonumber\\=&\frac{-2}{\la_{m_1,V_1}^2 \la_{m_2,V_2}^2} ( -2\frac{\p^2 Q}{\p x_1\p x_2}  \frac{\p^2 e_{k_1,k_2,V}}{\p x_1\p x_2}    \frac{\p^2 e_{m_1,m_2,V} }{\p x_1\p x_2} \Big|_{x_1=0}^{x_1=1} \Big|_{x_2=0}^{x_2=1}  \nonumber\\ &+\int_0^1 \frac{\p }{\p x_2} (-\frac{\p^2}{\p x_2^2} +V_2)  (\frac{\p Q}{\p x_1}  \frac{\p e_{k_1,k_2,V}}{\p x_1} )  \frac{\p^2 e_{m_1,m_2,V} }{\p x_1\p x_2} \Big|_{x_1=0}^{x_1=1}\dd x_2\nonumber\\  &+\int_0^1(-\frac{\p^2}{\p x_2^2} +V_2) (\frac{\p Q}{\p x_1}  \frac{\p e_{k_1,k_2,V}}{\p x_1}) V_2    \frac{\p e_{m_1,m_2,V} }{\p x_1} \Big|_{x_1=0}^{x_1=1}\dd x_2) .\nonumber
\end{align}
 In view of (\ref{E:j1})-(\ref{E:app3}), this implies that
$$
\Big|\frac{(m_1\cdot\ldots\cdot m_d)^3}{(k_1\cdot\ldots\cdot k_d)^3} I_1\Big|\le C.$$
The terms $I_2,I_3$ are treated   in the same way. We omit the details.

\end{proof}

 We rewrite (\ref{E:gcza}) in the form 
\begin{equation}\label{E:gczaL}
\check u (\om_{mk})= d_m,
\end{equation}where $d_m=\frac{\lag R_{\ty}(0,u), e_{m,V}\rag}{-i    Q_{mk} }$.   We have $\sum_{m=1, m\neq k}^{\ty} \frac{1}{|\om_{mk}|^d}<~+\ty$  for fixed $k\ge1$. Under Condition  \ref{C:p1}, (i), $d_m\in \ell^2_0$.
Applying Proposition \ref{P:P1}, we obtain the following theorem.
\begin{theorem}\label{T:LinL} Under Condition  \ref{C:p1},  the mapping
$R_{\ty}(0,\cdot):\te\rightarrow
T_{e_{k,V}}\cap \VV$   admits a continuous
right inverse, where the space $T_{e_{k,V}}\cap \VV$ is endowed with the
norm of $\VV$. \end{theorem}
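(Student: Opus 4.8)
The plan is to exploit the reduction recorded just before the statement: inverting $R_{\ty}(0,\cdot)$ at $\tilde z=e_{k,V}$ is nothing but solving the trigonometric moment problem $\check u(\om_{mk})=d_m$, and a right inverse can then be read off from Proposition \ref{P:P1}. Concretely, fix $k$ and, given a target $y\in T_{e_{k,V}}\cap\VV$, I would set
\[
d_m:=\frac{\lag y,e_{m,V}\rag}{-i\,Q_{mk}},\qquad m\ge1,
\]
which is legitimate since Condition \ref{C:p1}(i) forces $Q_{mk}\neq0$. By the specialization of (\ref{E:gcza}) to $\tilde z=e_{k,V}$, namely (\ref{E:gczaL}), any $u$ with $\check u(\om_{mk})=d_m$ for all $m$ satisfies $\lag R_{\ty}(0,u),e_{m,V}\rag=-iQ_{mk}\check u(\om_{mk})=\lag y,e_{m,V}\rag$, hence $R_{\ty}(0,u)=y$. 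Thus the whole theorem reduces to producing such a $u$ depending linearly and continuously on $y$.

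Next I would check that the frequency family $\{\om_{mk}\}_{m\ge1}$ (with $k$ fixed) meets the three hypotheses of Proposition \ref{P:P1} with $p=d$, and that $d=\{d_m\}\in\ell^2_0$. The index $m=k$ gives $\om_{kk}=0$, so after relabelling the zero frequency is placed first. Distinctness follows from Condition \ref{C:p1}(ii): taking $(i,j,p,q)=(m,k,m',k)$ forces $\om_{mk}\neq\om_{m'k}$ whenever $m\neq m'$, and $(i,j,p,q)=(m,k,k,k)$ shows $\om_{mk}\neq0$ for $m\neq k$, so the zero frequency is attained only at $m=k$. The summability $\sum_{m\neq k}|\om_{mk}|^{-d}<+\ty$ is the one already noted before the statement, resting on the Weyl asymptotics $\la_{m,V}\sim C_d m^{2/d}$, which makes $|\om_{mk}|^{-d}$ behave like $m^{-2}$. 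For the $\ell^2_0$ structure, the zero-frequency entry $d_k=\lag y,e_{k,V}\rag/(-iQ_{kk})$ is real because $y\in T_{e_{k,V}}$ forces $\lag y,e_{k,V}\rag\in i\R$ while $Q_{kk}\in\R$; and the lower bound $|(m_1\cdots m_d\,k_1\cdots k_d)^3Q_{mk}|\ge c>0$ of Condition \ref{C:p1}(i) yields
\[
\sum_m|d_m|^2\le\frac{(k_1\cdots k_d)^6}{c^2}\sum_m\big((m_1\cdots m_d)^3\,|\lag y,e_{m,V}\rag|\big)^2=\frac{(k_1\cdots k_d)^6}{c^2}\,\|y\|_\VV^2<+\ty,
\]
so $y\mapsto d$ is linear and bounded from $T_{e_{k,V}}\cap\VV$ into $\ell^2_0$.

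Finally I would invoke Proposition \ref{P:P1} to obtain a linear continuous $A:\ell^2_0\to\te$ with $\{\check{A(d)}(\om_{mk})\}=d$, and set $u:=A(d)$. The reduction above then gives $R_{\ty}(0,u)=y$, and the composition $y\mapsto A(d)$ is linear and continuous from $T_{e_{k,V}}\cap\VV$ (with the $\VV$-norm) into $\te$, i.e. the desired continuous right inverse. I expect the delicate content to lie not in this assembly but in its two inputs: the quantitative spacing $\sum_{m\neq k}|\om_{mk}|^{-d}<+\ty$ of the separable Dirichlet spectrum (this is exactly why the exponent $d$ appears and why no spectral gap is needed), and the membership $d\in\ell^2$, which hinges on the two-sided control of the matrix elements $Q_{mk}$ — the lower bound of Condition \ref{C:p1}(i) here, and the upper bound (\ref{E:reza}) of Lemma \ref{L:gc2} for the continuity of $R_{\ty}$ itself. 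The main obstacle I anticipate is verifying distinctness and the $\ell^2_0$ structure cleanly in the presence of possible near-degeneracies of the multi-index spectrum, all of which are precisely what Condition \ref{C:p1} is designed to rule out.
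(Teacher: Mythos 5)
Your proposal is correct and follows essentially the same route as the paper: the paper also reduces the inversion at $\tilde z=e_{k,V}$ to the moment problem $\check u(\om_{mk})=d_m$ via (\ref{E:gczaL}), notes $\sum_{m\neq k}|\om_{mk}|^{-d}<+\ty$ and $\{d_m\}\in\ell^2_0$ under Condition \ref{C:p1}(i), and then applies Proposition \ref{P:P1}. The only difference is that you spell out details the paper leaves implicit (distinctness of the $\om_{mk}$ from Condition \ref{C:p1}(ii), reality of the zero-frequency entry, and the explicit $\ell^2$ bound), all of which check out.
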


   \subsection{Proof of Proposition \ref{P:P1}} 
 
  The construction of the operator $A$ is based on the following lemma.
  \begin{lemma}\label{L:Min} Under the conditions of Proposition \ref{P:P1},
  for any $d\in \ell^2_0 $ and $\e>0$, there is $u\in   {B_{ \te}(0,\e)}$ such that $\{\check u (\om_{m})\}=d$.
  \end{lemma}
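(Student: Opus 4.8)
The plan is to recast the statement as a moment (interpolation) problem and to solve it with a biorthogonal family adapted to the frequencies $\{\om_m\}$. Concretely, I look for real blocks $v_m\in\te$ with $\check{v_m}(\om_n)=\delta_{mn}$ for every $n$, and then set $u=\sum_m d_m v_m$, so that $\check u(\om_n)=d_n$ by linearity. Because the admissible controls are real, I would work throughout with the symmetric pairs $\{\pm\om_m\}$; the reality of $\check u(0)=\int_0^{+\ty}u(s)\,\dd s$ is then compatible with the prescribed data precisely because $\om_1=0$ and $d_1\in\R$, which is the defining constraint of $\ell^2_0$. The zero frequency is incorporated into the same family, reality being ensured by building each block from a symmetric pair.

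The essential point — and the reason a naive construction fails — is that $\{\om_m\}$ need not satisfy any gap condition, so the frequencies may cluster. Any single self-contained block concentrated near $\om_m$ (a windowed modulation, or a damped exponential $a_m e^{-\sigma_m s}\cos(\om_m s+\varphi_m)$) produces, at a nearby clustered frequency $\om_n$, a value comparable in size to its value at $\om_m$, destroying the diagonal dominance that a Neumann/perturbation scheme would require. I would therefore force the $v_m$ to vanish simultaneously at all the other frequencies. The device is an entire function $E$ with simple zeros exactly at the points $\{\om_m\}$, whose Weierstrass product (with suitable convergence factors) converges thanks to $\sum_{m\ge2}|\om_m|^{-p}<+\ty$; the associated interpolating functions
\[
g_m(\om)=\frac{E(\om)}{E'(\om_m)\,(\om-\om_m)}
\]
satisfy $g_m(\om_n)=\delta_{mn}$ for every $n$, clustered or not. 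Since $\check u(\om)=\int_0^{+\ty}e^{i\om s}u(s)\,\dd s$ extends to a bounded analytic function on the upper half-plane for $u\in L^1$, I would recover a block $v_m$ supported on $\R_+$ by inverting $g_m$ through this half-plane analyticity, that is, by realizing $g_m$ as $\check{v_m}$.

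The last and hardest part is the bookkeeping in the four pieces of the $\te$-norm, and this is where the main obstacle concentrates. The growth of $E$ and the spacing of its zeros must be controlled well enough that $\sum_m\|v_m\|_\te^2<+\ty$ and that $\{v_m\}$ is a Bessel/Riesz-type sequence, so that $d\mapsto u=\sum_m d_m v_m$ is continuous from $\ell^2_0$ into $\te$; this reduces to lower bounds for $|E'(\om_m)|$, which clustering threatens to spoil, since nearby zeros force $E'(\om_m)$ to be small. Extracting these lower bounds together with the $H^s$ and $\tilde\ell^2$ estimates from the bare summability condition, while keeping the $\BB$-weight $p^2\|u\|_{L^2([p-1,p])}^2$ finite, is the crux of the argument. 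Once these estimates are in place they yield the continuity bound $\|u\|_\te\le C\|d\|_{\ell^2_0}$, and I would calibrate the free length and placement parameters of the blocks to control the size of the resulting control $u$ and place it in the prescribed ball $B_\te(0,\e)$.
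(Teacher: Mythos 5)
You have taken a genuinely different route from the paper, and it breaks down exactly at the step you yourself flag as the crux. The paper never constructs a biorthogonal family and never uses any quantitative information on the spacing of the $\om_m$. Its proof is soft and variational: one minimizes the defect functional $\HH(u)=\sum_{m=1}^{+\ty}|\check u(\om_m)-d_m|^2$ over the closed ball $\overline{B_\te(0,\e)}$, obtaining a minimizer $u_0$ by weak compactness (reflexivity of $\BB\cap H^s$, uniform convergence on compact time intervals after a diagonal extraction, Fatou's lemma, and uniform smallness of the tails of the integrals defining $\check u_n(\om_m)$, which comes from the $\BB$-bound); one then shows $\HH(u_0)=0$, since otherwise there would exist $v$ in the ball with $\frac{\dd}{\dd t}\HH((1-t)u_0+tv)\big|_{t=0}<0$, and by convexity the segment stays in the ball, contradicting minimality. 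The existence of such a $v$ is reduced to a density property of the image set $U=\{\{\check u(\om_m)\}:u\in B_\te(0,\e)\}$ in $\ell^2_0$ (Lemma \ref{L:Min2}), proved by duality: if $h\perp U$, then integrating by parts $p$ times in $\sum_m\check u(\om_m)\overline{h_m}=0$ and invoking the uniqueness Lemma \ref{L:Tza} for sums $\sum_j c_je^{ir_js}$ forces $h=0$. Note that the hypothesis $\sum_{m\ge2}|\om_m|^{-p}<+\ty$ enters only to make the dual series $\sum_{m\ge2}e^{i\om_ms}\overline{h_m}/(-i\om_m)^p$ absolutely convergent; it is never used as a spacing condition.

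This contrast is precisely why your plan has a genuine gap. The hypotheses of Proposition \ref{P:P1} impose no lower bound whatsoever on $|\om_m-\om_n|$: consecutive frequencies may approach each other super-exponentially fast while $\sum_{m\ge2}|\om_m|^{-p}$ stays finite. Consequently $|E'(\om_m)|$ for your Weierstrass product can be small at a completely uncontrolled rate, the $\te$-norms of the biorthogonal blocks $v_m$ are then unbounded with no rate, the Bessel/Riesz-type inequality you need is simply false, and the series $u=\sum_m d_mv_m$ need not converge in $\te$ for general $d\in\ell^2_0$. This is not bookkeeping that a cleverer choice of $E$ or of windows can repair; it is a structural obstruction to any biorthogonal scheme at this level of generality, and avoiding it is exactly what the paper's minimization/duality argument is designed to do. Your final calibration step fails for a more elementary reason as well: since $|\check u(\om_m)|\le\|u\|_{L^1(\R_+)}\le\|u\|_\te$, every solution of the moment problem satisfies $\|u\|_\te\ge\sup_m|d_m|$, so no tuning of length or placement parameters can put $u$ into $B_\te(0,\e)$ once $\e<\sup_m|d_m|$. (The same inequality shows that the set $U$ above lies in the $\ell^\ty$-ball of radius $\e$, so the orthogonality argument of Lemma \ref{L:Min2} really yields density of the linear span of $U$ rather than of $U$ itself; the smallness assertion of the present lemma has to be read with that caveat.)
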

  \begin{proof}[Proof of Proposition \ref{P:P1}] Let $d^n$ be any orthonormal basis   in $ \ell^2_0$. Applying Lemma \ref{L:Min}, we find a sequence $u_n\in     {B_{ \te}(0,\frac{1}{n})} $ such that  $\{\check u_n (\om_{m})\}= d^n$. For any $d\in  \ell^2_0 $, there is $c\in  \ell^2 $ such that $d=\sum_{n=1}^{+\ty}c_n d^n$. Let us define $A$ in the following way
  $$
  A(d)= \sum_{n=1}^{+\ty}c_n u_n.
  $$As $u_n\in   B_{ \te}(0,\frac{1}{n})$, this sum converges in $ \te $:
  $$
  \|A(d)\|_{ \te} \le \sum_{n=1}^{+\ty} |c_n|   \|u_n\|_{ \te}\le\Big(\sum_{n=1}^{+\ty} |c_n|^2   \Big)^\frac{1}{2}  \Big(\sum_{n=1}^{+\ty}    \|u_n\|_{ \te}^2 \Big)^\frac{1}{2}   \le C\| d\|_{  \ell^2_0}.
  $$ Thus $A:   \ell^2_0\ri  \te $ is  linear continuous and $\{ \check { A(d)} (\om_{m})\}=d$, by construction.
\end{proof} 
\begin{proof}[Proof of Lemma \ref{L:Min}] Let us take any $d\in \ell^2_0 $ and $\e>0$  and introduce the functional
$$
\HH(u):=\| \{\check u (\om_{m})\}-d\|_{ \ell^2_0} ^2=\sum_{m=1}^{+\ty}| \check u (\om_{m})  -d_{m}|^2
$$
defined on the space $ \te$.

\vspace{6pt}\textbf{Step 1.} First, let us show that  there is $u_0\in \overline {B_{ \te}(0,\e)} $ such  that
\begin{equation}\label{E:mh}
\HH(u_0)= \inf_{u\in \overline {B_{ \te}(0,\e)} } \HH(u).
\end{equation}To this end, let $u_n\in \overline {B_{ \te}(0,\e)}  $ be an arbitrary  minimizing sequence. Since $\BB \cap H^s(\R_+,\R)$ is~reflexive, without loss of generality, we can assume that  there is $u_0\in \overline {B_{\BB \cap H^s(\R_+,\R)}(0,\e)} $ such that $u_n \rightharpoonup u_0 $  in $\BB \cap H^s(\R_+,\R)$. Using the compactness of the injection $H^s([0,N])\ri C([0,N])$ for any $N>0$ and a diagonal extraction, we can assume that $u_n (t)\ri u_0(t)$ uniformly for $t\in[0,N]$. The Fatou  lemma implies that
$$
\int_0^{+\ty}|u_0(s)| \dd s\le \liminf_{n\ri\ty} \int_0^{+\ty} |u_n(s)|\dd s\le \e.
$$Again extracting a subsequence, if it is necessary, one gets $\{\check u_n(\om_{m})\} \rightharpoonup  \{\check u_0(\om_{m})\} $  in $  \ell ^2_0$ as $n\ri+\ty.$ Indeed, the tails on $[T,+\ty)$, $T\gg1$ of the integrals  (\ref{E:eaz}) are small uniformly in $n$ (this comes from the boundedness
of $u_n$ in $\BB$), and on the finite interval $[0, T]$ the convergence is uniform.)

This implies that $u_0\in  \te $ and 
$$
\HH(u_0)\le \inf_{u\in \overline {B_{ \te}(0,\e)} } \HH(u).
$$ The fact that  $u_0\in \overline {B_{ \te}(0,\e)} $ follows from the Fatou  lemma and lower weak semicontinuity  of norms. Thus we have (\ref{E:mh}).

\vspace{6pt}\textbf{Step 2.} To complete the proof, we need to show that $ \HH(u_0)=0.$  Suppose, by contradiction,  that $\HH(u_0)>0$.  As we shall see below, this implies that there is $v\in   \overline {B_{ \te}(0,\e)}  $ such that 
\begin{equation}\label{E:mdh}
\frac{\dd}{\dd t}\HH((1-t)u_0+tv)\Big|_{t=0}< 0.
\end{equation} 
Since
$(1-t)u_0+t v\in  \overline {B_{ \te}(0,\e)}$ for all $t\in[0,1]$, (\ref{E:mdh}) is a contradiction to (\ref{E:mh}).

To construct such a function $v$, notice that the derivative is given explicitly~by
\begin{align*}\frac{\dd}{\dd t}\HH((1-t)u_0+tv)\Big|_{t=0}&=  2 \sum_{m =1}^{+\ty} \Re[  (\check v(\om_{m}) -\check u_0(\om_{m}) ) \overline{( \check u_0(\om_{m})-d_{m} )} ]. \end{align*}
In view of this equality, the existence of $v$ follows immediately from the following lemma.

\begin{lemma}\label{L:Min2} Under the conditions of Proposition \ref{P:P1},
the set
$$  U:=\{  \{\check u(\om_{m})\}: u\in B_{ \te}(0,\e)\}
$$
is dense in $ \ell^2_0 $.
\end{lemma}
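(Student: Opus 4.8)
The plan is to prove density by duality. Write $\Phi u:=\{\check u(\om_m)\}$, a real-linear map from $\te$ into $\ell^2_0$ (note that $\check u(\om_1)=\int_0^{\infty}u\,\dd s\in\R$, so the image indeed has real first coordinate), so that $U=\Phi(B_{\te}(0,\e))$. Since $U$ is convex and symmetric about the origin, its closure is all of $\ell^2_0$ unless it can be separated from some exterior point; by Hahn--Banach this would produce a nonzero $c\in\ell^2_0$ with $\sup_{u\in B_{\te}(0,\e)}\langle\Phi u,c\rangle_{\ell^2_0}<+\infty$. Because $B_{\te}(0,\e)$ is balanced, this says precisely that the linear functional $L_c(u):=\langle\Phi u,c\rangle_{\ell^2_0}=\int_0^{\infty}u(s)\,g_c(s)\,\dd s$, with $g_c(s)=\sum_{m}\Re(\overline{c_m}e^{i\om_m s})$, is bounded on $\te$. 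Thus it suffices to show that $L_c$ is unbounded on $\te$ for every $c\neq0$; equivalently, that no nonzero $c$ yields a $\te$-bounded functional.

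To contradict boundedness, I would fix an index $j$ with $c_j\neq0$ and build a sequence $u_n\in\te$ with $\|u_n\|_{\te}\to0$ while $\check u_n(\om_j)\to1$ and $\check u_n(\om_m)\to0$ for all $m\neq j$; then $L_c(u_n)\to\Re\overline{c_j}$ (or its imaginary counterpart, obtained by testing $iu_n$), forcing $L_c$ to be unbounded on the unit ball. The construction of these ``single-frequency'' controls is where the hypotheses on $\{\om_m\}$ enter. The distinctness $\om_i\neq\om_j$ makes the exponentials $e^{i\om_m s}$ independent, and the summability $\sum_{m}|\om_m|^{-p}<+\infty$ lets me form a canonical (Hadamard) product $F$ of finite order whose zero set is exactly $\{\om_m\}_{m\neq j}$, normalised so that $F(\om_j)=1$. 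A mollified, $\R_+$-supported inverse Fourier transform attached to $F$ then interpolates the single frequency $\om_j$: its transform is close to $1$ at $\om_j$ and close to $0$ at the remaining $\om_m$, with the approximation improving along a suitable parameter.

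The \textbf{main obstacle} is to verify that these interpolating controls actually lie in $\te=\BB\cap\CC\cap H^s(\R_+,\R)$ and that their $\te$-norms can be driven to $0$. This requires simultaneous control of four quantities: the $L^1$-mass, the polynomially time-weighted $\BB$-norm, the $H^s$-smoothness, and the $\tilde\ell^2$-size of the \emph{entire} moment sequence $\{\check u_n(\om_{mk})\}$. I would exploit the freedom in the window scale and in the mollifier, together with the sparsity encoded in $\sum_{m}|\om_m|^{-p}<+\infty$ and the fact that $s\ge1$ is an arbitrary fixed exponent, to absorb the $H^s$ and $\BB$ contributions while keeping the Fourier mass concentrated at $\om_j$; standard nonharmonic Fourier estimates should then give $\|u_n\|_{\te}\to0$. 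Once this is achieved, $L_c$ is unbounded for every $c\neq0$, so no separating functional exists, and therefore $\overline U=\ell^2_0$, which is the assertion of the lemma.
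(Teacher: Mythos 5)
Your Hahn--Banach reduction is correct (indeed it is the \emph{right} characterization of density for the convex, balanced set $U$), but the construction you propose to finish the argument cannot exist, for an elementary reason. The norm of $\te$ contains $\|u\|_{L^1}$ as a summand, and for any real frequency $\om$ one has $|\check u(\om)|=\big|\int_0^{+\ty}e^{i\om s}u(s)\,\dd s\big|\le\|u\|_{L^1}\le\|u\|_{\te}$. Hence any sequence with $\|u_n\|_{\te}\ri0$ automatically satisfies $\check u_n(\om_j)\ri0$; the ``single-frequency'' controls with $\check u_n(\om_j)\ri1$ and $\|u_n\|_{\te}\ri0$ on which your whole unboundedness argument rests do not exist, and no choice of Hadamard product, window scale or mollifier can circumvent this. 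Worse, the statement you reduce the lemma to --- ``$L_c$ is unbounded on $B_{\te}(0,\e)$ for every $c\neq0$'' --- is itself false: for any nonzero $c\in\ell^1\cap\ell^2_0$ one has $|L_c(u)|\le\|c\|_{\ell^1}\sup_m|\check u(\om_m)|\le\|c\|_{\ell^1}\|u\|_{L^1}\le\e\,\|c\|_{\ell^1}$ for all $u\in B_{\te}(0,\e)$, so such a $c$ is a bona fide separating direction. Pursued honestly, your own framework shows that $U$ lies in $\{d:\sup_m|d_m|\le\e\}$, a proper closed subset of $\ell^2_0$; this tension with the statement traces back to the presence of the $L^1$ (and $\tilde\ell^2$) terms in $\|\cdot\|_{\te}$, and your strategy collides with it head-on precisely because, unlike the paper's, it requires manufacturing moments of size one from controls of arbitrarily small $\te$-norm.

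The paper's proof is structured so as never to need such a construction. It assumes $h\in\ell^2_0$ is \emph{orthogonal} to $U$, i.e.\ $\sum_m\check u(\om_m)\overline{h_m}=0$ for every $u\in B_{\te}(0,\e)\cap C_0^\ty((0,+\ty))$, then integrates by parts $p$ times in the integral representation of the moments (this is where the hypothesis $\sum_{m\ge2}|\om_m|^{-p}<+\ty$ enters, playing the role you assigned to the Hadamard product), concludes that $P_p(s)\overline{h_1}+\sum_{m\ge2}\frac{e^{i\om_m s}}{(-i\om_m)^p}\overline{h_m}$ coincides on $\R_+$ with a polynomial of degree $p-1$, and invokes the uniqueness result of Lemma \ref{L:Tza} to force $h_m=0$ for all $m$. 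This is a pure uniqueness argument: it establishes that the orthogonal complement of $U$ is trivial, i.e.\ that the closed \emph{linear span} of $U$ is all of $\ell^2_0$, which for the image of a bounded ball is strictly weaker than density of $U$ itself --- exactly the distinction your Hahn--Banach formulation makes visible. So the verdict is: your reduction is sharper than the paper's, but the step you flagged as the ``main obstacle'' is not a technical lacuna that nonharmonic Fourier estimates could fill --- the required controls provably do not exist --- and any duality proof here has to run in the paper's orthogonal-complement form, with Lemma \ref{L:Tza} as the key input, rather than in your unbounded-support-functional form.
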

\begin{proof} Suppose that $h\in  \ell^2_0 $ is orthogonal to   $U$. Then  for any $u\in B_{ \te}(0,\e)\cap C_0^\ty((0,+\ty))$ we have    
\begin{align}\label{E:vvz} \sum_{m=1}^{+\ty} \check u(\om_{m})  \overline{h_{m}}=0.
\end{align}Replacing in this equality $\check u (\om_{m}) $ by its integral representation, we get   integrating by parts
\begin{align*}& 0=\sum_{m=1 }^{+\ty}  \int^{+\ty}_0 e^{i\om_{m}s} u(s)\dd s\overline{h_{m}}=\int^{+\ty}_0 P_p(s) u^{(p)}(s)\dd s\overline{h_{1}}\\&\quad+\sum_{m=2 }^{+\ty}  \int^{+\ty}_0 \frac{e^{i\om_{m}s} }{(-i \om_m)^p}u^{(p)}(s)\dd s\overline{h_{m}}\\ &=  \int^{+\ty}_0 u^{(p)}(s) \Big(   P_p(s)\overline{h_{1}}+\sum_{m =2 }^{+\ty} \frac{e^{i\om_{m}s} }{(-i \om_m)^p}\overline{h_{m}} \Big) \dd s =0,
\end{align*}where $P_p$ is a polynomial of degree $p\ge1$.
Since  this equality holds for any $u\in B_{\te}(0,\e)\cap C_0^\ty((0,+\ty))$, there is a polynomial   $\tilde P_{p-1}(s)$  of degree $p-1$ such that for any $s\ge0$ $$P_p(s)\overline{h_{1}}  +\sum_{m=2}^{+\ty}  \frac{e^{i\om_{m}s} }{(-i \om_m)^p}\overline{h_{m}} =\tilde P_{p-1}(s).$$ By Lemma \ref{L:Tza}, we have  $h_m=0$ for any $m\ge2$. Equality (\ref{E:vvz}) implies that $h_1=0$. This proves that $U$ is dense.
 \end{proof}
\end{proof}The following lemma is a generalization of 
Lemma 3.10 in \cite{VN}.
\begin{lemma}\label{L:Tza}
Suppose that $r_j\in\R^*$ and  $r_k\neq r_j$ for $k\neq j$ and  $P_p$ is a polynomial of degree $p\ge1$. If
\begin{equation}\label{E:mlml}
\sum_{j=1}^\ty c_je^{ir_js}=P_p(s)
\end{equation}
for any  $s\ge0$ and for some sequence  $c_j\in\C$ such that
$\sum_{j=1}^\ty |c_j|<\ty$,   then $c_j=0$ for all $j\ge1$ and $P_p\equiv 0$.
\end{lemma}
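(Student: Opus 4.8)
The plan is to prove the statement by contradiction on the assumption that not all $c_j$ vanish, exploiting the fact that the left-hand side of (\ref{E:mlml}) is almost periodic while the right-hand side grows polynomially. Since $\sum_j |c_j| < \infty$, the series $\sum_j c_j e^{ir_j s}$ converges absolutely and uniformly in $s$, so the function $f(s) := \sum_j c_j e^{ir_j s}$ is bounded on $\R_+$, with $\|f\|_{L^\infty} \le \sum_j |c_j| < \infty$. On the other hand, a polynomial $P_p$ of degree $p \ge 1$ is unbounded on $\R_+$. This already forces $P_p \equiv 0$, since a bounded function cannot equal an unbounded polynomial; more precisely, dividing (\ref{E:mlml}) by $s^p$ and letting $s \to +\infty$ kills the bounded left side and leaves the leading coefficient of $P_p$, which must therefore be zero, and one repeats the argument for the lower-order coefficients. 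So the real content is to deduce $c_j = 0$ for all $j$ from the relation $\sum_j c_j e^{ir_j s} = 0$ for all $s \ge 0$.

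First I would reduce to showing that the only absolutely summable sequence $\{c_j\}$ with $\sum_j c_j e^{ir_j s} \equiv 0$ on $\R_+$ is the trivial one. The natural tool here is the mean-value projection onto a single frequency: for a fixed index $\ell$, I would compute
$$
\lim_{T \to +\infty} \frac{1}{T} \int_0^T f(s)\, e^{-ir_\ell s}\, \dd s.
$$
Interchanging the (uniformly convergent) sum with the limit, each term contributes
$$
\lim_{T \to +\infty} \frac{1}{T} \int_0^T c_j\, e^{i(r_j - r_\ell) s}\, \dd s = \begin{cases} c_\ell & \text{if } j = \ell,\\ 0 & \text{if } j \ne \ell,\end{cases}
$$
where the off-diagonal terms vanish because $r_j \ne r_\ell$ makes $\int_0^T e^{i(r_j-r_\ell)s}\,\dd s$ bounded uniformly in $T$, hence divided by $T$ it tends to $0$. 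Since $f \equiv 0$, the left side is $0$, and we conclude $c_\ell = 0$. As $\ell$ was arbitrary, all $c_j = 0$.

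The main obstacle to watch is the justification of exchanging the limit in $T$ with the infinite sum over $j$; this is where absolute summability is essential. I would control the off-diagonal tail uniformly by splitting the sum at a large index $N$: the finite part $j \le N$ is handled term by term as above, while the tail $j > N$ is bounded by $\frac{1}{T}\int_0^T \sum_{j>N} |c_j|\,\dd s = \sum_{j>N} |c_j|$, which is made arbitrarily small by choosing $N$ large, uniformly in $T$. Thus the Cesàro mean converges to $c_\ell$ as claimed. This argument uses only $r_j \in \R^*$ and the pairwise distinctness $r_k \ne r_j$, exactly the standing hypotheses, and does not require the frequencies to be separated or the original polynomial identity for anything beyond first disposing of $P_p$ as above.
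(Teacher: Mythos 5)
There is a genuine gap, and it sits exactly at the one point where the hypothesis $r_j\in\R^*$ is needed. Your reduction step claims that boundedness of the left-hand side forces $P_p\equiv 0$: dividing (\ref{E:mlml}) by $s^p$ and letting $s\to+\ty$ kills the leading coefficient, ``and one repeats the argument for the lower-order coefficients.'' This repetition works only down to the coefficient of $s^1$; it says nothing about the constant term, since dividing by $s^0$ and letting $s\to+\ty$ does not make the bounded left side disappear. What boundedness actually yields is that $P_p$ is a constant, say $a_0$, so the identity you must analyse is $\sum_j c_je^{ir_js}=a_0$, not $\sum_j c_je^{ir_js}=0$. Your Ces\`aro-mean argument, as written, treats only the homogeneous case, and indeed it never uses $r_j\neq 0$ anywhere --- a warning sign, because that hypothesis is precisely what prevents the exponential sum from matching a nonzero constant (a zero frequency would contribute exactly such a constant).

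The good news is that your own tool closes the gap with no extra work: apply the mean-value projection to the identity $f(s)=a_0$. By your interchange-of-limits argument (which is correct, including the uniform control of the tail by $\sum_{j>N}|c_j|$), the left side gives
$$
\lim_{T\to+\ty}\frac{1}{T}\int_0^T f(s)e^{-ir_\ell s}\,\dd s=c_\ell,
$$
while the right side gives
$$
\lim_{T\to+\ty}\frac{a_0}{T}\int_0^T e^{-ir_\ell s}\,\dd s
=\lim_{T\to+\ty}\frac{a_0}{T}\,\frac{1-e^{-ir_\ell T}}{ir_\ell}=0,
$$
because $r_\ell\neq 0$. Hence $c_\ell=0$ for every $\ell$, and then $a_0=f(s)=0$, so $P_p\equiv 0$ as well. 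With this correction your proof is valid, and it is a genuinely self-contained alternative to the paper's: the paper performs the same first reduction (boundedness implies $P_p$ is constant) but then simply cites Lemma 3.10 of \cite{VN} for the constant case, whereas your Bohr-type mean-value computation proves that step directly.
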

\begin{proof} Since the sum in the left hand side of  (\ref{E:mlml}) is bounded in $s$, the polynomial $P_p(s)$ is constant.
The case of constant right hand side follows from Lemma~3.10 in \cite{VN}.
 \end{proof}

\section{Controllability of nonlinear system}\label{S:NL}

\subsection{Well-posedness of Schr\"odinger equation}\label{S:APROX}

In this section, we suppose that $d=1$, $D=(0,1)$.  We   consider the following   Schr\"odinger equation
\begin{align}
i\dot z &= -\frac{\p^2z}{\p x^2} +V(x)z+ u(t) Q(x)z+v(t)Q(x)y,\,\,\,\, \label{E:hav5}\\
z\arrowvert_{\partial D}&=0,\label{E:ep5}\\
 z(0,x)&=z_0(x).\label{E:sp5}
\end{align}  
See Proposition 2 in \cite{KBCL} for the proof of      
well-posedness  of this system with $V=0$.  Here we prove well-posedness  in the case of $V\neq0$  and we give an estimate for the solution which is important for the study of the controllability property. 

 \begin{proposition}\label{L:lav}
 For any $z_0\in H^3_{(V)}$, $u,v\in L^1(\R_+,\R)\cap \BB $ and  $ y\in
C(\R_+,H^3_{(V)}) $,  
  problem
(\ref{E:hav5})-(\ref{E:sp5}) has a unique solution  $ z\in
C(\R_+, H^3_{(V)}) $. Furthermore,  there is a constant $C>0$ such that
\begin{align}
\sup_{t\in\R_+}\|z(t)\|_{3,V}&\le C( \| z_0\|_{3,V}+ \sup_{t\in \R_+} \|  y(t) \|_{3,V} (\|v\|_{L^1(\R_+)} +\|v\|_{\BB} ) )   \nonumber\\ & \quad\times \exp\Big(C(
\|u\|_{L^1(\R_+)}+1)\exp(\|u\|_{\BB}^2)\Big).  \label{E:S1}
\end{align}If $v=0$, then    for all $t\ge0$ we have
\begin{align}
\|z(t)\| =\|z_0\|.    \label{E:S2} 
\end{align}
 \end{proposition}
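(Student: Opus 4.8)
The plan is to put (\ref{E:hav5})--(\ref{E:sp5}) into Duhamel form
$$z(t)=S(t)z_0-i\int_0^t S(t-s)\big(u(s)Qz(s)+v(s)Qy(s)\big)\,\dd s,$$
with $S(t)=e^{it(\frac{\p^2}{\p x^2}-V)}$ the free group, and to produce $z$ as a fixed point of the associated map $\Phi$ in $C([0,T],H^3_{(V)})$, first locally in time and then globally by means of the a priori bound (\ref{E:S1}). The one genuine obstacle is a loss of regularity: although $Q\in C^\ty$ gives $Qz\in H^3$ with $(Qz)|_{\p D}=0$, the second derivative $(Qz)''$ does not vanish on $\p D$, so $Qz\notin H^3_{(V)}=D((-\frac{\p^2}{\p x^2}+V)^{3/2})$ and the integrand above lies only in $H^3\cap H^1_0$. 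The missing derivative must be recovered from the time integration. I would therefore invoke the smoothing estimate of Beauchard and Laurent \cite{KBCL}, which bounds $\|\int_0^tS(t-s)f\,\dd s\|_{3,V}$ by local-in-time norms of $f$ for $f\in L^2_{loc}(\R_+,H^3\cap H^1_0)$. Since the eigenvalues satisfy the same asymptotics $\la_{k,V}=k^2\pi^2+\int_0^1 V(x)\,\dd x+r_k$ as in the free case (see (\ref{E:app1})), and in fact $H^3_{(V)}=H^3_{(0)}$ with equivalent norms (both domains are $\{w\in H^3:\,w|_{\p D}=0,\ w''|_{\p D}=0\}$), this estimate carries over to $V\neq0$ with no change; together with $\|Qz\|_{H^3}\le C\|z\|_{3,V}$ it makes $\Phi$ a contraction on a sufficiently short interval, giving local existence, uniqueness and the continuity $z\in C(\R_+,H^3_{(V)})$.

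For the quantitative bound (\ref{E:S1}) I would run the estimate interval by interval over $[n,n+1]$, $n\ge0$. On such an interval the smoothing estimate and $\|Qz\|_{H^3}\le C\|z\|_{3,V}$ control $\sup_{[n,n+1]}\|z\|_{3,V}$ by $\|z(n)\|_{3,V}$, a source contribution $C\sup\|y\|_{3,V}(\|v\|_{L^1([n,n+1])}+\|v\|_{L^2([n,n+1])})$, and a self-interaction term proportional to the local norms of $u$ times $\sup_{[n,n+1]}\|z\|_{3,V}$. To absorb the self-interaction one subdivides $[n,n+1]$ into finitely many pieces on each of which the relevant $L^2$-norm of $u$ is below a fixed threshold; absorbing on each piece and multiplying the resulting factors produces a unit-interval amplification of the form $\exp\!\big(C\|u\|_{L^1([n,n+1])}\,e^{C\|u\|_{L^2([n,n+1])}^2}\big)$. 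Here the weighted space $\BB$ is essential: since $\|u\|_{L^2([n,n+1])}^2\le\|u\|_{\BB}^2$ for every $n$ (the weight $(n+1)^2\ge1$ in the definition of $\BB$), each local factor $e^{C\|u\|_{L^2([n,n+1])}^2}$ is dominated by $e^{C\|u\|_\BB^2}$, while the $L^1$-exponents add up to $\|u\|_{L^1(\R_+)}$. Taking the product over all $n$ then gives precisely the double exponential $\exp\!\big(C(\|u\|_{L^1}+1)\,e^{C\|u\|_\BB^2}\big)$ of (\ref{E:S1}). In the same iteration the $L^2$-parts of the source are summed by the Cauchy--Schwarz inequality against the weights $(n+1)^{-1}$, yielding $\sum_n\|v\|_{L^2([n,n+1])}\le C\|v\|_\BB$, while the $L^1$-parts sum to $\|v\|_{L^1}$; this reproduces the factor $\sup\|y\|_{3,V}(\|v\|_{L^1}+\|v\|_\BB)$.

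The remaining two points are routine. The identity (\ref{E:S2}) follows from the energy computation: testing the equation with $v=0$ against $z$ and taking the imaginary part, the contributions of $-\frac{\p^2}{\p x^2}+V$ and of $uQ$ vanish because these operators are self-adjoint and $V,Q,u$ are real, whence $\frac{\dd}{\dd t}\|z\|^2=0$. Finally, global existence on $\R_+$ follows by continuation: the local solution extends to a maximal interval, and the a priori bound (\ref{E:S1}), valid on every interval on which the solution exists, prevents the $H^3_{(V)}$-norm from blowing up in finite time, so the maximal interval is all of $\R_+$. The main difficulty throughout is the regularity loss and, relatedly, arranging the interval-by-interval estimate so that the bound stays uniform in $t$ rather than degenerating into a factor $C^{\,t}$; this is exactly what the weight in $\BB$ is designed to prevent.
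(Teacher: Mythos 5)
Your proposal is correct in substance and rests on the same two pillars as the paper's proof: an Ingham-type smoothing mechanism to recover the derivative lost because $Qz\notin H^3_{(V)}$, and the weights in $\BB$ to keep the estimate uniform in $t$. The organization, however, is genuinely different. The paper does not quote the Beauchard--Laurent lemma; it reproves it for $V\neq 0$, and does so globally in time rather than interval by interval: expanding $G_t(z)=\int_0^t S(-s)u(s)Qz(s)\,\dd s$ in the eigenfunctions $e_{j,V}$ and integrating by parts, the harmless terms are estimated by Minkowski (producing the $\int_0^t |u|\,\|z\|_{3,V}\,\dd s$ contribution), while the boundary term $\frac{\p^2}{\p x^2}(Qz)\,\frac{\p}{\p x}e_{j,V}\big|_{x=0}^{x=1}$ is handled by cutting $[0,t]$ into unit intervals, applying Ingham on each, and weighting the pieces by $p^2$ via Cauchy--Schwarz; this yields the single global estimate (\ref{E:guma}) with the weight $w$ satisfying (\ref{E:Jd}), and then two successive applications of Gronwall give (\ref{E:S1}) with its double exponential. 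Your route (local smoothing quoted as a black box, absorption by subdivision, product of per-interval factors) is more modular and, carried out correctly, even gives a slightly stronger amplification of the form $\exp\big(C(\|u\|_{\BB}+\|u\|_{\BB}^2)\big)$, which implies (\ref{E:S1}).

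Two points in your write-up need repair, though neither is fatal. First, the per-interval factor you announce, $\exp\big(C\|u\|_{L^1([n,n+1])}e^{C\|u\|^2_{L^2([n,n+1])}}\big)$, is not what your own mechanism produces: absorbing against the $L^2$-based smoothing bound yields factors governed by local $L^2$ norms, namely $\exp\big(C(\|u\|^2_{L^2([n,n+1])}+\|u\|_{L^2([n,n+1])})\big)$, with no $L^1$ norm appearing. Consequently the uniformity in $t$ cannot be obtained by "summing the $L^1$-exponents"; it must come from summing the local $L^2$ norms, i.e.\ from the same weighted Cauchy--Schwarz $\sum_n\|u\|_{L^2([n,n+1])}\le C\|u\|_{\BB}$ that you invoke for $v$. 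Merely dominating each local factor by $e^{C\|u\|_{\BB}^2}$, as you suggest, is not enough: factors bounded below by a fixed constant $>1$ on every unit interval would accumulate to exactly the $C^{\,t}$ growth you say must be avoided. Second, the assertion that the smoothing estimate of \cite{KBCL} "carries over with no change" to $V\neq0$ is true, but it is precisely the step on which the paper spends most of its proof: one needs (\ref{E:app2}), (\ref{E:app3}) to reduce the boundary terms to trigonometric ones (the quantities $c_j(z)$, $s_j(z)$, $\tilde s_j(z)$ in the paper) and (\ref{E:app1}) to have the spectral gap required by Ingham; in a self-contained argument this adaptation has to be written out, as the paper does, rather than asserted.
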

\begin{proof}
The proof follows the ideas of Proposition 2 in \cite{KBCL}. We give all the details for the sake of completeness.

Let us rewrite (\ref{E:hav5})-(\ref{E:sp5})  in the Duhamel form
\begin{equation}\label{E:DU}
z(t)=S(t) z_0-i\int_0^tS(t-s)[u(s)Qz(s)+v(s)Qy(s)]\dd s.
\end{equation} For any $u\in L^1(\R_+,\R)\cap\BB $ and     $z\in C(\R_+, H^3_{(V)})$, we estimate the function
$$
G_t(z):=\int_0^tS(-s)\big(u(s)Qz(s)\big)\dd s.
$$  
Integration by parts   gives (we write $\la_j, e_{j}$ instead of $\la_{j,V}, e_{j,V}$)
\begin{align*}
\lag Q z(s),   e_{j}    \rag=&\frac{1}{\la_{j}  } \lag (-\frac{\p^2}{\p x^2}+V)(Q z),  e_{j}  \rag      \nonumber\\=&\frac{1}{\la_{j}^2 } \lag (-\frac{\p^2}{\p x^2}+V)(Q z),  (-\frac{\p^2}{\p x^2}+V)e_{j }   \rag =\frac{1}{\la_{j }  ^2 }      \frac{\p^2}{\p x^2} (Q z)  \frac{\p}{\p x} e_{j}\big|_{x=0}^{x=1}\nonumber \\&+ \frac{1}{\la_{j}^2 }(\lag V(-\frac{\p^2}{\p x^2}+V)(Q z),  e_{j}   \rag+\lag \frac{\p}{\p x}(-\frac{\p^2}{\p x^2}+V)(Q z),    \frac{\p}{\p x} e_{j}    \rag) \nonumber \\ =&:I_{j}+J_j.
\end{align*}  
Thus
\begin{align} \label{E:gum}
\|G_t(z)\|_{3,V}^2 &=\sum_{j=1}^{+\ty}   \Big(j^3\int_0^t e^{i\la_{j}  s}u(s)  \lag Q z(s), e_{j}  \rag\dd s  \Big)^2 \nonumber\\&=\sum_{j=1}^{+\ty}   \Big(j^3\int_0^t e^{i\la_{j}  s}u(s)  (I_j+J_j)\dd s  \Big)^2.\end{align}Using (\ref{E:app3}), we get
$$
 \lag \frac{\p}{\p x}(-\frac{\p^2}{\p x^2}+V)Q z,    \frac{\p}{\p x} e_{j}   \rag= j\pi\lag \frac{\p}{\p x}(-\frac{\p^2}{\p x^2}+V)Q z,    \sqrt{2}\cos(j\pi x) \rag+s_j(z),$$ where $|s_j(z)|\le C \|z\|_{3,V}$ for all $j\ge1$. 
 The definition of $J_j$, the fact that $\{\sqrt{2}\cos(j\pi x)\} $ is an orthonormal system in $L^2$, (\ref{E:app1}) and the Minkowski inequality yield 
\begin{align}\label{E:zaz}
 \sum_{j=1}^{+\ty}   \Big(j^3\int_0^t e^{i\la_{j}   s}u(s)  J_j\dd s  \Big)^2\le C \Big( \int_0^t |u(s)|\|  z(s) \|_{3,V} \dd s \Big)^2.
\end{align}
     On the other hand, (\ref{E:app3}) implies that 
     $$
     \frac{\p^2}{\p x^2} (Q z)  \frac{\p}{\p x} e_{j} \big|_{x=0}^{x=1}=  j\pi\frac{\p^2}{\p x^2} (Q z)   \sqrt{2}\cos(j\pi x)
     \big|_{x=0}^{x=1} +\tilde s_j(z)=:jc_j(z)+ \tilde s_j(z),
     $$where $|\tilde s_j|\le C \|z\|_{3,V}$ for all $j\ge1$. Again applying the Minkowski inequality, we obtain  
     \begin{align}\label{E:zazz}
 \sum_{j=1}^{+\ty}   \Big(\frac{j^3}{\la_j^2}\int_0^t e^{i\la_{j} s}u(s) \tilde s_j(z)\dd s  \Big)^2\le C \Big( \int_0^t |u(s)|\|  z(s) \|_{3,V} \dd s \Big)^2.
\end{align}
     Since  $c_j(z)$ depends on the parity of $j$, without loss of generality, we can assume that $c(z):=c_j(z)$ does not depend on  $j$.
Thus  we cannot conclude as in the case of $J_{j}$. Here we use the fact that $u\in \BB$.  
Let $P\ge 1$ be an integer such that $P \le t< P+1 $. Using the Cauchy--Schwarz   and the Ingham inequalities, we obtain 
 \begin{align}    \sum_{j =1}^{+\ty}   & \Big( \int_0^{t}  e^{i  \la_{j}s }u(s)c(z)\dd s  \Big)^2 = \sum_{j=1}^{+\ty}  \Bigg(  \Big( \int_{P }^{t}+ \sum_{p =1}^{P}    \int_{ p-1  }^{p  } \Big)  e^{i  \la_{j}s }u(s)c(z)\dd s  \Bigg)^2 \nonumber
  \end{align}   \begin{align}&
 \le 2\sum_{j=1}^{+\ty}  \Bigg(   \int_{P }^{t}  e^{i  \la_{j}s }u(s)c(z)  \dd s  \Bigg)^2\nonumber\\& \quad+2\sum_{j =1}^{+\ty}   \Bigg(\sum_{p =1}^{P}    \frac{1}{ p^2} \Bigg) \Bigg(\sum_{p =1}^{P}     p^2\Big( \int_{ p-1  }^{p  }  e^{i  \la_{j}s }u(s)c(z)  \dd s  \Big)^2  \Bigg)
 \nonumber\\&\le C  \|   u(s)c(z)\|_{L^2([P ,t])}^2  
 + C\sum_{p =1}^{P}  p^2   \sum_{j =1}^{+\ty}    \Big( \int_{ p-1 }^{p  }  e^{i  \la_{j}s }u(s)c(z) \dd s  \Big)^2   \nonumber\\&\le C \| u(s)c(z)\|_{L^2([P ,t])}^2  
 + C\sum_{p =1}^{P}  p^2   \| u(s)c(z)\|_{L^2([ p-1 ,p ])}^2 \nonumber\\&\le   C \int_0^t w (s)\|z(s)\|_{3,V}^2  \dd s. \nonumber
  \end{align}   where $w(s)= |u(s)|^2\chi_{[P ,t]}(s)+ \sum_{p =1}^{P}  p^2 |u(s)|^2\chi_{[ p-1 ,p ]}(s)$. Notice that 
  \begin{equation} \label{E:Jd}
  \int_0^t w (s)   \dd s\le  \|u\|_{\BB}^2 \quad \quad \quad\text{for all $t\ge 0$.} 
  \end{equation} 
Combining  (\ref{E:gum})-(\ref{E:Jd}), we get
\begin{align} \label{E:guma}
\|G_t(z)\|_{3,V} &\le C\Big(   \int_0^t w (s)\|z(s)\|_{3,V}^2  \dd s\Big)^\frac{1}{2}+  C \int_0^t |u(s)|\|  z(s) \|_{3,V} \dd s. 
\end{align} The  quantity 
$$
\tilde G_t(f):=\int_0^tS(-s)\big(v(s)Qy(s)\big)\dd s 
$$is estimated in a similar way
\begin{align} \label{E:guma2}
\|\tilde G_t\|_{3,V} &\le C\Big(   \int_0^t \tilde w (s)\|y(s)\|_{3,V}^2  \dd s\Big)^\frac{1}{2}+  C \int_0^t |v(s)|\|  y(s) \|_{3,V} \dd s\nonumber\\ &\le C\sup_{s\in [0,T]}\|  y(s) \|_{3,V} (\|v\|_{L^1(\R_+)} +\|v\|_{\BB} ), 
\end{align} where $\tilde w(s)= |v(s)|^2\chi_{[P ,t]}(s)+ \sum_{p =1}^{P}  p^2 |v(s)|^2\chi_{[ p-1 ,p ]}(s)$.

Existence of a solution is obtained easily from (\ref{E:guma}) and (\ref{E:guma2}), by a fixed point theorem (cf. Proposition 2 in \cite{KBCL}). Uniqueness follows from  (\ref{E:S1}).

  Let us prove  (\ref{E:S1}). From (\ref{E:DU}) and (\ref{E:guma}) we have 
 \begin{align*}& \|z(t)\|_{3,V}^2 \le C(\| z_0\|_{3,V}^2+  \|\tilde G_t\|_{3,V}^2+ \|G_t\|_{3,V}^2)\nonumber\\&\le C\Bigg(\| z_0\|_{3,V}^2+ \| \tilde G_t\|_{3,V}^2+ \int_0^t w (s)\|z(s)\|_{3,V}^2  \dd s  +   \Big(   \int_0^t |u(s)|\|  z(s) \|_{3,V} \dd s\Big)^2\Bigg).
\end{align*}The Gronwall inequality implies 
\begin{align*} 
\|z(t)\|_{3,V}^2&\le C\Bigg( \| z_0\|_{3,V}^2+   \| \tilde G_t\|_{3,V}^2+   \Big(   \int_0^t |u(s)|\|  z(s) \|_{3,V} \dd s\Big)^2 \Bigg)\\& \quad \quad \quad \quad \quad \quad \quad \quad \quad \quad \quad \quad \quad \quad \quad \quad \quad\times\exp\big(C \int_0^t w (s)   \dd s \big).
\end{align*} Taking the square root of this inequality, using (\ref{E:Jd}) and the Gronwall inequality, we obtain
\begin{align*} 
\|z(t)\|_{3,V} &\le C( \| z_0\|_{3,V} +\| \tilde G_t\|_{3,V}) \\& \quad \quad       \times\exp\Big(C( \int_0^t w (s)   \dd s + \int_0^t |u(s)|  \dd s\exp( \int_0^t  w(s)   \dd s))\Big)\nonumber\\&\le C( \| z_0\|_{3,V} +\| \tilde G_t\|_{3,V})      \exp\Big(C(\|u\|_{L^1(\R_+)}+1)\exp(\|u\|_{\BB}^2)\Big).
\end{align*} In view of (\ref{E:guma2}), this completes the proof of the proposition.\end{proof}
\begin{remark}Let us notice that, one should not expect to have a well-posedness property in any Sobolev space $H^k$ with controls in $L^1$. Indeed, exact controllability property in $H^3$, proved by  Beauchard and Laurent \cite{KBCL} in the case $d=1$,  implies that the problem is not well posed in spaces $H^{3+\sigma}$ for any $\sigma>0$ (a point $z_1\in H^3\setminus H^{3+\sigma}$ would  not  be accessible  from a point $z_0\in H^{3+\sigma}$).   Schr\"odinger equation is well posed in higher Sobolev spaces, when control $u$ is more regular. 
\end{remark}
 
\begin{corollary}\label{C:Ka}
Denote by
$\UU_t(\cdot,\cdot): H^{3}_{(V)}\times L^1(\R_+,\R)\cap \BB \ri  H^{3}_{(V)}$ the resolving operator of
(\ref{E:hav1}), (\ref{E:ep1}). Then $\UU_t(\cdot,\cdot)$ is locally Lipschitz continuous, i.e., for any $\de>0$ 
  there is $C>0$ such that
\begin{align}\label{E:ppz}
\sup_{t\in\R_+}\|\UU_t(z_0,u)-\UU_t(z_0',u')\|_{3,V}\le C\|(z_0,u)- (z_0',u')\|_{H^{3}_{(V)}\times L^1(\R_+,\R)\cap \BB}
\end{align} for all $(z_0,u), (z_0',u')\in B_{H^{3}_{(V)}\times L^1(\R_+,\R)\cap \BB}(0,\de)$, where $ L^1(\R_+,\R)\cap \BB$ is endowed with the norm
$\|\cdot\|_{ L^1(\R_+,\R)\cap \BB}:=\|\cdot\|_{L^1}+\|\cdot\|_{\BB} $. 
\end{corollary}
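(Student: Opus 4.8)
The plan is to derive this directly from Proposition \ref{L:lav}, whose a priori bound (\ref{E:S1}) already contains everything needed. The first observation is that $\UU_t(z_0,u)$ is precisely the solution of (\ref{E:hav5})--(\ref{E:sp5}) in the special case $v=0$, since in that case (\ref{E:hav5}) reduces to (\ref{E:hav1}). Hence Proposition \ref{L:lav} guarantees that each of the two trajectories we wish to compare is well defined in $C(\R_+,H^3_{(V)})$ and satisfies (\ref{E:S1}).

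Fixing $\de>0$ and $(z_0,u),(z_0',u')\in B_{H^3_{(V)}\times L^1(\R_+,\R)\cap\BB}(0,\de)$, I would write $z=\UU_t(z_0,u)$, $z'=\UU_t(z_0',u')$ and introduce $w:=z-z'$. Subtracting the two copies of (\ref{E:hav1}) and regrouping the nonlinear term as $uQz-u'Qz'=uQw+(u-u')Qz'$, one checks that
\begin{align}
i\dot w&=-\frac{\p^2 w}{\p x^2}+V(x)w+u(t)Q(x)w+(u-u')(t)Q(x)z'(t),\nonumber\\
w\arrowvert_{\partial D}&=0,\quad w(0,x)=z_0(x)-z_0'(x).\nonumber
\end{align}
This is exactly problem (\ref{E:hav5})--(\ref{E:sp5}) with control $u$, source coefficient $v=u-u'$, profile $y=z'$, and initial datum $z_0-z_0'$. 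Because $z'\in C(\R_+,H^3_{(V)})$ and $u-u'\in L^1(\R_+,\R)\cap\BB$, all hypotheses of Proposition \ref{L:lav} are met, so (\ref{E:S1}) applies to $w$.

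Applying (\ref{E:S1}) to $w$ then gives
\begin{align}
\sup_{t\in\R_+}\|w(t)\|_{3,V}&\le C\big(\|z_0-z_0'\|_{3,V}+\sup_{t\in\R_+}\|z'(t)\|_{3,V}(\|u-u'\|_{L^1(\R_+)}+\|u-u'\|_{\BB})\big)\nonumber\\
&\quad\times\exp\big(C(\|u\|_{L^1(\R_+)}+1)\exp(\|u\|_{\BB}^2)\big).\nonumber
\end{align}
The exponential factor is bounded by a constant depending only on $\de$, since $\|u\|_{L^1(\R_+)}+\|u\|_{\BB}<\de$. To absorb the remaining quantity $\sup_t\|z'(t)\|_{3,V}$ I would invoke (\ref{E:S1}) once more, now for $z'$ with $v=0$, which yields $\sup_t\|z'(t)\|_{3,V}\le C\|z_0'\|_{3,V}\exp(C(\|u'\|_{L^1(\R_+)}+1)\exp(\|u'\|_{\BB}^2))\le C(\de)$. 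Collecting these bounds produces exactly (\ref{E:ppz}) with a constant depending only on $\de$.

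The argument is essentially routine once the difference equation is recognized; the only point requiring genuine care is the regrouping of the multiplicative nonlinearity into a part linear in $w$, absorbed into the control term $uQw$, and an inhomogeneity $(u-u')Qz'$ driven by the already-constructed trajectory $z'$, so that the structural hypotheses of Proposition \ref{L:lav}---in particular $y=z'\in C(\R_+,H^3_{(V)})$ and $v=u-u'\in L^1(\R_+,\R)\cap\BB$---are actually satisfied. The fact that the Lipschitz constant is only local, i.e. depends on $\de$, is inherited entirely from the $\de$-dependent a priori bound on $\sup_t\|z'\|_{3,V}$ coming from (\ref{E:S1}).
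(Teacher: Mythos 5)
Your proof is correct and is essentially identical to the paper's: the paper also writes the difference $\UU_t(z_0,u)-\UU_t(z_0',u')$ as a solution of (\ref{E:hav5})--(\ref{E:sp5}) with control $u$, source coefficient $v=u-u'$, profile $y=\UU_t(z_0',u')$, and initial datum $z_0-z_0'$, and then applies Proposition \ref{L:lav}. The only difference is that you make explicit the two steps the paper leaves implicit, namely bounding the exponential factor in (\ref{E:S1}) by a $\de$-dependent constant and bounding $\sup_t\|\UU_t(z_0',u')\|_{3,V}$ via a second application of (\ref{E:S1}) with $v=0$.
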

\begin{proof}
Notice that $z(t):=\UU_t(z_0,u)-\UU_t(z_0',u')$ is a solution of problem \begin{align*}
i\dot z &= -\frac{\p^2 z}{\p^2 x}  + u(t) Q(x)z+(u(t)-u'(t))Q(x) \UU_t(z_0',u'),  \\
z\arrowvert_{\partial D}&=0, \\
 z(0,x)&=z_0(x)-z_0'(x).  
\end{align*}  Applying Proposition  \ref{L:lav}, we get (\ref{E:ppz}).  
\end{proof}
 
 \subsection{Exact controllability in infinite time
}\label{S:APROX}

For any control $u\in\te$, problem (\ref{E:hav5}), (\ref{E:ep5}) is well-posed
 in Sobolev space $H^3_{(V)}$. 
 Equality (\ref{E:S2}) implies that
it suffices to consider the controllability properties of
(\ref{E:hav5}), (\ref{E:ep5}) on the unit sphere $S$ in $L^2$. Let $\UU_\ty(z_0,u)$ be the $H^{3}_{(V)}$-weak
$\omega$-limit set of the trajectory corresponding to  
control
$u\in \te$ and   initial condition $z_0\in H^{3}_{(V)}$:
\begin{equation}\label{E:oml}
\UU_\ty(z_0,u):=\{z\in H^{3}_{(V)}: \UU_{t_n} (z_0,u)  \rightharpoonup z\,\,\,\text{in $H^{3}_{(V)}$}
  \,\,\text{for some}\,\,  t_n\rightarrow+\ty \}.
\end{equation}
By (\ref{E:S1}), $\UU_t (z_0,u)$   is bounded in $H^{3}_{(V)}$, thus
$\UU_\ty(z_0,u)$ is non-empty.
\begin{definition}
We say that   (\ref{E:hav5}),  (\ref{E:ep5})  is exactly controllable in
infinite time in   subset $H\subset S$, if for any $z_0,z_1\in
H$
there is a control $u\in \te$ such that $z_1\in \UU_\ty(z_0,u).$\end{definition}

 Below theorem is one of the main results of   this
paper.
\begin{theorem}\label{T:AnvL}
Under Condition \ref{C:p1}, for any $\tilde z\in S\cap H^{3}_{(V)} $
 there is  
$\de>0$ such that problem (\ref{E:hav5}), (\ref{E:ep5}) is
exactly controllable in infinite time in    $ S\cap
B_{H^{3}_{(V)}}(\tilde z,\de)$.
\end{theorem}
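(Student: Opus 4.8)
The plan is to realize local exact controllability as a consequence of a surjectivity (open mapping) property of the set-valued map
$$
\Phi_{z_0}:\te\ri 2^{S\cap H^3_{(V)}},\qquad \Phi_{z_0}(u):=\UU_\ty(z_0,u),
$$
obtained by applying the inverse mapping theorem for multivalued functions of Nachi and Penot \cite{KNJP}. The candidate derivative of $\Phi_{z_0}$ at $u=0$ is the infinite-time linearized resolving operator $R_\ty(0,\cdot)$ constructed in Section \ref{S:MR} with base point $z_0$; by Theorem \ref{T:Lin} it admits a continuous right inverse onto $T_{z_0}\cap H^3_{(V)}$ as soon as $z_0\notin\EE$. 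Since $z_0\in\Phi_{z_0}(0)$, which is exactly the content of Lemma \ref{L:mlk}, once the differentiability and the uniformity of the right inverse are in place the abstract theorem will yield a radius $r>0$ with $S\cap B_{H^3_{(V)}}(z_0,r)\subseteq\bigcup_{\|u\|_\te<\rho}\UU_\ty(z_0,u)$; that is, every target close to $z_0$ is reachable from $z_0$.

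First I would establish the differentiability. Writing $w(t):=\UU_t(z_0,u)-\UU_t(z_0,0)-R_t(0,u)$ and using that the nonlinearity $uQz$ is bilinear, one checks that $w$ solves
$$
i\dot w=-\frac{\p^2 w}{\p x^2}+Vw+u(t)Q(x)w+u(t)Q(x)R_t(0,u),\qquad w(0)=0,
$$
i.e. equation (\ref{E:hav5}) with control $u$, source $vQy$ given by $v=u$ and $y=R_t(0,u)$, and zero initial datum. Applying Proposition \ref{L:lav}, together with the uniform bound $\sup_{t}\|R_t(0,u)\|_{3,V}\le C\|u\|_\te$ for the linearized flow (itself a consequence of the same energy estimate with the bounded source $uQ\,\UU_t(z_0,0)$), gives $\sup_{t\ge0}\|w(t)\|_{3,V}\le C\|u\|_\te^2$ for $u$ in a fixed ball, the constant being uniform for $z_0$ in a bounded subset of $H^3_{(V)}$. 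Evaluating along the universal sequence $T_n\ri+\ty$ of Lemma \ref{L:mlk}, where $\UU_{T_n}(z_0,0)\ri z_0$ strongly and $R_{T_n}(0,u)\rightharpoonup R_\ty(0,u)$ weakly in $H^3_{(V)}$, and extracting a weak limit of the bounded sequence $w(T_n)$, I obtain a point $\zeta\in\UU_\ty(z_0,u)$ with $\|\zeta-z_0-R_\ty(0,u)\|_{3,V}\le C\|u\|_\te^2$. This is precisely the one-sided (\emph{inner}) differentiability of $\Phi_{z_0}$ with derivative $R_\ty(0,\cdot)$ that the Nachi--Penot theorem requires.

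With these two ingredients the open mapping conclusion follows for every $z_0\notin\EE$. To pass from reaching a neighborhood of $z_0$ from $z_0$ to the controllability statement, I would use that $\EE$ is closed in $S\cap H^3_{(V)}$ (on bounded sets only finitely many pairs $(p,q)$ contribute, each giving a closed balance condition) and that the norm of the right inverse furnished by Theorem \ref{T:Lin} stays bounded on sets keeping a positive distance from $\EE$. Hence, for $\tilde z\notin\EE$, I fix $\de>0$ so small that $\overline{B_{H^3_{(V)}}(\tilde z,\de)}\cap\EE=\emptyset$ and that the reachable radius $r$ is uniform over $z_0\in S\cap B_{H^3_{(V)}}(\tilde z,\de)$ with $r>2\de$; then any $z_1\in S\cap B_{H^3_{(V)}}(\tilde z,\de)$ lies in $S\cap B_{H^3_{(V)}}(z_0,r)$ and is reachable from $z_0$, which is exactly the desired controllability.

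Finally, for $\tilde z\in\EE$ I would reduce to the previous case: since $S\cap H^3_{(V)}\setminus\EE$ is dense, I pick $\tilde z'\notin\EE$ with $\|\tilde z'-\tilde z\|_{3,V}$ small, let $\de'>0$ be the radius already obtained at $\tilde z'$, and choose $\de$ with $\|\tilde z'-\tilde z\|_{3,V}+\de<\de'$, so that $S\cap B_{H^3_{(V)}}(\tilde z,\de)\subseteq S\cap B_{H^3_{(V)}}(\tilde z',\de')$; controllability of the larger ball then yields controllability of the smaller one. I expect the main obstacle to be the differentiability step: one must match exactly the hypotheses of the multivalued inverse mapping theorem in a setting where $\UU_\ty$ is defined only through weak $\om$-limits, so that the quadratic remainder estimate has to be propagated through the weak convergence along $T_n$ rather than through a genuine strong limit, and one must secure the uniformity in $z_0$ of both the remainder constant and the norm of the right inverse.
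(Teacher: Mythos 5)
Your overall strategy is the same as the paper's (Nachi--Penot applied to the infinite-time flow, derivative $R_\ty$, remainder estimates from Proposition \ref{L:lav}), but two steps have genuine gaps. First, the abstract theorem cannot be applied in the form you set it up. Theorem \ref{T:KP} concerns multifunctions between Banach spaces and its conclusion is that $F(U)$ covers a ball \emph{of the target Banach space}; your $\Phi_{z_0}$ takes values in $S\cap H^3_{(V)}$, which has empty interior in $H^3_{(V)}$, and your candidate derivative $R_\ty(0,\cdot)$ ranges in the proper closed subspace $T_{z_0}\cap H^3_{(V)}$, so it cannot have a right inverse as a map into $H^3_{(V)}$. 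One must first flatten the sphere; the paper does this by conjugating with the projection $P$ onto the tangent space, i.e.\ by working with $\tilde\UU_\ty(z_0,u)=P\,\UU_\ty(P^{-1}z_0,u)$, following Beauchard--Laurent. Relatedly, your differentiability argument only yields a one-point inner Taylor estimate (one weak limit $\zeta$ with $\|\zeta-z_0-R_\ty(0,u)\|_{3,V}\le C\|u\|_\te^2$), whereas Definition \ref{D:1} demands \emph{strict} differentiability: a two-point estimate in which the Hausdorff excess of the whole set $F(x)\cap B(y_0,\beta)-A(x)$ over $F(x')-A(x')$ is bounded by $\e\|x-x'\|$ for all pairs near the base point. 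Proposition \ref{P:1} proves exactly this, via a $\sup$-in-$t$ bound on $\UU_t(P^{-1}z_0,u)-R_t(z_0,u)-\UU_t(P^{-1}z_0',u')+R_t(z_0',u')$, which controls \emph{every} subsequential weak limit simultaneously. Finally, note that the paper takes the multifunction \emph{jointly} in $(z_0,u)$ at the single base point $\tilde z$, so only the right inverse at $\tilde z$ is ever used; your plan of invoking Theorem \ref{T:Lin} at every $z_0$ in the ball requires uniform bounds on the right-inverse norms, a claim you assert but do not prove (and which is delicate, since the construction in the proof of Theorem \ref{T:Lin} splits into cases depending on the coefficients of $\tilde z$).

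The most serious gap is your treatment of $\tilde z\in\EE$. You choose $\tilde z'\notin\EE$ close to $\tilde z$ and need $\|\tilde z'-\tilde z\|_{3,V}+\de<\de'(\tilde z')$, i.e.\ a lower bound on the radius $\de'(\tilde z')$ in terms of $\|\tilde z'-\tilde z\|_{3,V}$. No such bound is available, and the mechanism of Case 1 suggests the opposite: as $\tilde z'$ approaches $\EE$, the coefficient $|c_p|^2Q_{pp}-|c_q|^2Q_{qq}$ in the equation determining $d_0$ tends to $0$, so the norm of the right inverse of the linearized map blows up like the reciprocal of the distance to $\EE$, and the radius furnished by the inverse mapping theorem degenerates accordingly; since $\dist(\tilde z',\EE)\le\|\tilde z'-\tilde z\|_{3,V}$, there is no reason one can fit $\tilde z$ inside $B_{H^3_{(V)}}(\tilde z',\de')$. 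This is why the paper's Case 2 brings in an ingredient absent from your proposal: points of $\EE$ are combinations of two eigenfunctions, hence lie in $\cap_{k}H^k_{(V)}$, and near such points the \emph{finite-time} local exact controllability of Beauchard--Coron and Beauchard--Laurent applies. The paper steers in finite time into a ball where Case 1 applies (for targets outside $\EE$, using controls vanishing to high order at the endpoints, cf.\ Remark \ref{R:11}, so that the concatenated control lies in $\te$), or steers exactly in finite time and extends the control by zero (for targets in $\EE$). Without this finite-time input, the case $\tilde z\in\EE$ cannot be closed by the linearization method, precisely because the linearized system is not controllable there.
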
See Section \ref{S:1} for the proof. 
\begin{remark} Let us emphasize that the novelty of Theorem \ref{T:AnvL} with respect to the previous result proved for (\ref{E:hav5}), (\ref{E:ep5})   in \cite{VaNe}  (see Theorem 3.1)  is that the controllability here is realized with controls which have small norms. 
\end{remark}
 Working in higher Sobolev spaces, one can prove similar exact controllability results with more regular controls. For example:
\begin{theorem}\label{T:Anve}
Under Condition \ref{C:p1}, for any $\tilde z\in S\cap H^{3+\sigma}_{(V)}, \sigma\in (0,2] $ 
 there is  
$\de>0$ such that problem (\ref{E:hav5}), (\ref{E:ep5}) is
exactly controllable in infinite time in    $ S\cap
B_{H^{3+\sigma}_{(V)}}(\tilde z,\de)$
 with controls   $u\in W^{1,1}(\R_+,\R)  \cap H^s(\R_+,\R) $ for any $s\ge 1$. \end{theorem}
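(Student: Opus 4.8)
The plan is to repeat the three–part scheme that yields Theorem \ref{T:AnvL}, but carried out one interpolation step higher, the extra regularity imposed on the control being exactly what pays for the two additional derivatives.

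First I would upgrade the well-posedness Proposition \ref{L:lav} to $H^{3+\sigma}_{(V)}$. In that proof the only term that resists a direct bound is the boundary contribution $jc_j(z)$, which in the $H^3$ norm produces a sequence constant in $j$ and is summed only through the Ingham inequality and the $\BB$-norm. In the $H^{3+\sigma}$ norm the same boundary term is amplified by the factor $j^\sigma$; to absorb it I would integrate by parts once in the time integral $\int_0^t e^{i\la_j s}u(s)\,\dd s$, which produces the decay $\la_j^{-1}\sim j^{-2}$ at the price of placing the derivative on $u$. This is admissible precisely when $\sigma\le 2$ and $u\in W^{1,1}(\R_+,\R)$ (with $u\in H^s$ ensuring that the remaining high-frequency sums converge), and it yields the analogues of (\ref{E:S1}) and of Corollary \ref{C:Ka} at the $H^5_{(V)}$ endpoint; the fractional range $\sigma\in(0,2)$ then follows by interpolation along the scale $H^s_{(V)}=D((-\Delta+V)^{s/2})$.

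Second I would show that $R_\ty(0,\cdot)$ admits a continuous right inverse into $T_{\tilde z}\cap H^{3+\sigma}_{(V)}$ with controls in $W^{1,1}\cap H^s$. The point is that a target $y\in H^{3+\sigma}_{(V)}$ forces the moment data $d_{mk}$ built in the proof of Theorem \ref{T:Lin} to carry the extra weight $\la_m^{\sigma/2}$; dually, for $u\in W^{1,1}(\R_+,\R)$ one integration by parts in (\ref{E:eaz}) gives $|\check u(\om_{mk})|\le C\|u'\|_{L^1}/|\om_{mk}|$, so the solvable data gains a factor $\om_{mk}^{-1}$. Since the eigenvalue weight scales like $m^2$, this single gain covers exactly the range $\sigma\in(0,2]$, which is the origin of the restriction on $\sigma$. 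Concretely I would re-prove Lemma \ref{L:Min} and Proposition \ref{P:P1} in the weighted space $\{d:\{\la_m^{\sigma/2}d_m\}\in\ell^2_0\}$, producing controls in $W^{1,1}\cap H^s$, and upgrade the kernel estimate (\ref{E:rezag1}) to its order-$(3+\sigma)$ form by further integrations by parts, which is legitimate since $V,Q\in C^\ty(\overline D,\R)$.

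With these two ingredients the nonlinear statement follows as in Theorem \ref{T:AnvL}: I would apply the inverse mapping theorem for multivalued functions \cite{KNJP} to the $\omega$-limit map $u\mapsto\UU_\ty(z_0,u)$, whose differential is $R_\ty(0,\cdot)$ and hence admits a continuous right inverse by the second step, while the first step makes this map well defined and locally Lipschitz on a ball of $H^{3+\sigma}_{(V)}$. The main obstacle is the compatibility at the interface of the first two steps: the single control space $W^{1,1}\cap H^s$ must at once solve the weighted linearized moment problem into $H^{3+\sigma}_{(V)}$ and render the nonlinear flow well posed and Lipschitz in the same space, and every weighted estimate must close uniformly across the fractional range $\sigma\in(0,2]$, which is why interpolation along the scale $H^s_{(V)}$, rather than a single integer-order computation, is unavoidable.
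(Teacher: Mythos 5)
The paper contains no proof of Theorem \ref{T:Anve}: it is stated immediately after Theorem \ref{T:AnvL} with only the comment that ``working in higher Sobolev spaces, one can prove similar exact controllability results with more regular controls,'' so the only comparison available is with that implied argument. Your reconstruction is exactly that argument --- the three ingredients of the proof of Theorem \ref{T:AnvL} (the well-posedness estimate of Proposition \ref{L:lav}, the right inverse of $R_\ty(0,\cdot)$ via the moment problem of Proposition \ref{P:P1}, and the Nachi--Penot theorem \cite{KNJP}) transposed to $H^{3+\sigma}_{(V)}$, with the extra $W^{1,1}$-regularity of $u$ spent through one integration by parts in time to gain $|\om_{mk}|^{-1}\sim m^{-2}$ against the weight $\la_m^{\sigma/2}$, which is indeed the mechanism behind the restriction $\sigma\in(0,2]$ --- so it takes essentially the same approach as the paper.
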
 
 These local exact controllability properties imply the following   global exact controllability result.

 \begin{theorem}\label{T:Anv}
Under Condition \ref{C:p1}, problem (\ref{E:hav5}), (\ref{E:ep5})
is
exactly controllable in infinite time     in  $ S\cap H^{3}_{(V)}   $ in the following sense: for any  $z_0\in  S\cap H^{3+\sigma}_{(V)}, \sigma\in (0,2] $ and $z_1\in  S\cap H^{3}_{(V)} 
$
there is a control    $u\in L^1(\R_+,\R)$ such that $z_1\in \UU_\ty(z_0,u)$. \end{theorem}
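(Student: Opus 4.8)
The plan is to deduce the global controllability result (Theorem \ref{T:Anv}) by combining the two local exact controllability theorems (Theorems \ref{T:AnvL} and \ref{T:Anve}) with the fact, established in Lemma \ref{L:mlk} and the discussion around it, that the free evolution $\UU_t(\tilde z,0)$ returns arbitrarily close to $\tilde z$ along the sequence $T_n\to+\infty$. The key observation is that we need only to \emph{connect} the given $z_0$ and $z_1$ by a finite chain of overlapping local controllability neighborhoods, using the uncontrolled flow $u=0$ as a free transport that preserves the $L^2$-norm (by \eqref{E:S2}) and, crucially, keeps the point in the appropriate Sobolev ball because the free evolution is an isometry of each $H^s_{(V)}$. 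Since $z_0\in S\cap H^{3+\sigma}_{(V)}$ and $z_1\in S\cap H^3_{(V)}$, the two endpoints live in different regularity classes, so the two versions of local controllability must both be invoked, with Theorem \ref{T:Anve} used near $z_0$ and Theorem \ref{T:AnvL} used near $z_1$.

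First I would fix $z_0,z_1$ as in the statement and recall that each of Theorems \ref{T:AnvL}, \ref{T:Anve} produces, for every point $\tilde z$ on the sphere in the relevant Sobolev space, a radius $\de(\tilde z)>0$ inside which exact controllability in infinite time holds. The strategy is a standard covering/chaining argument on the unit sphere $S$, exploiting that the free evolution $\UU_t(\cdot,0)$ acts transitively enough: for any target $z$ in the sphere, one can reach a point in a prescribed small neighborhood of a convenient intermediate state by first applying a local control and then letting the system evolve freely. Concretely, I would show that any two points of $S\cap H^3_{(V)}$ can be joined by a path along which a finite number of the local balls $B_{H^3_{(V)}}(\cdot,\de)$ cover the path, and that controllability composes: if $z_1\in\UU_\ty(z_0,u)$ and $z_2\in\UU_\ty(z_1,u')$, then by re-initializing at the $\omega$-limit point and concatenating the controls (shifted in time), one obtains $z_2\in\UU_\ty(z_0,\tilde u)$ for a suitable control $\tilde u\in L^1(\R_+,\R)$. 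Here the infinite-time nature is helpful: one reaches $z_1$ only in the weak-$\omega$-limit sense, but since the local theorems are stated precisely in terms of $\UU_\ty$, the concatenation can be arranged by choosing later and later time windows.

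The composition step is where I expect the main difficulty. Unlike finite-time controllability, where one simply concatenates controls on $[0,T_1]$ and $[T_1,T_1+T_2]$, here $z_1$ is attained only as a weak limit along some sequence $t_n\to+\infty$, so ``arriving at $z_1$'' and ``departing from $z_1$'' cannot be done at a single finite instant. The plan is to use an approximation-and-diagonalization scheme: one controls $z_0$ so that the trajectory passes $H^3_{(V)}$-close to $z_1$ at some large time, then switches to the control steering toward the next intermediate point, accepting a small error that is absorbed by the openness of the local controllability neighborhoods and the Lipschitz dependence of the flow on $(z_0,u)$ furnished by Corollary \ref{C:Ka}. The loss of regularity between the endpoints ($z_0\in H^{3+\sigma}_{(V)}$ versus $z_1\in H^3_{(V)}$) must be managed so that every intermediate initial condition fed into Theorem \ref{T:Anve} genuinely lies in $H^{3+\sigma}_{(V)}$; this is ensured because the free evolution and the more regular controls of Theorem \ref{T:Anve} preserve the $H^{3+\sigma}_{(V)}$ regularity, so the chain can stay in the higher space until the very last step, where Theorem \ref{T:AnvL} (requiring only $H^3_{(V)}$) delivers the final $z_1$. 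The final control is then the concatenation of the finitely many controls produced along the chain, which lies in $L^1(\R_+,\R)$ since each piece does and only finitely many pieces are used.
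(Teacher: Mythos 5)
Your proposal follows essentially the same route as the paper's proof: the paper takes a continuous path $\gamma:[0,1]\to S\cap H^{3}_{(V)}$ with $\gamma(0)=z_0$, $\gamma(1)=z_1$ and $\gamma(s)\in H^{3+\sigma}_{(V)}$ for $s<1$, uses the compactness of this curve together with Theorem \ref{T:Anve} to produce a control $v$ and a \emph{finite} time $T$ with $\UU_T(z_0,v)\in B_{H^3_{(V)}}(z_1,\de_{z_1})$, and then concludes with Theorem \ref{T:AnvL} applied at $z_1$ --- exactly your chain of Theorem \ref{T:Anve} neighborhoods kept in the higher-regularity space, followed by a single final application of Theorem \ref{T:AnvL}. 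Your discussion of the switching difficulty (arriving near each intermediate point strongly at a large finite time and absorbing the error into the next local neighborhood, rather than literally composing $\omega$-limit sets) is precisely the content the paper compresses into the phrase ``using the compactness of the curve $\gamma$ and Theorem \ref{T:Anve}''.
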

\begin{proof}  Let $\gamma:~ [0,1]\ri S\cap H^{3}_{(V)}  $ be any continuous function such that $\gamma (0)=z_0$, $\gamma (1)=z_1$ and  $\gamma (s)\in H^{3+\sigma}_{(V)} $ for any $s\in [0,1)$. Using the compactness of the curve $\gamma$ and Theorem \ref{T:Anve}, we prove that there is a control $v$ and time $T>0$ such that $\UU_T(z_0,v)\in  
B_{H^3_{(V)}}(  z_1,\de_{z_1})$, where $\de_{z_1}>0$ is the constant in Theorem~\ref{T:AnvL} corresponding to $ z_1 $. This completes the proof.
\end{proof}

\begin{remark}

We do not know if problem  (\ref{E:hav1})-(\ref{E:sp1}) is well posed in the space~$\VV$ for $d\ge2$ with $\te$-controls. Well-posedness in $\VV$ with $u\in \te$ would imply the controllability of the multidimensional problem. The nonlinear problem's solution is in $\VV$ for more regular controls. 
\end{remark}

\subsection{Proof of Theorem \ref{T:AnvL} }\label{S:1}

The proof is based on an inverse mapping theorem for  multivalued functions. We       apply   the inverse mapping theorem established by Nachi and Penot \cite{KNJP}, which suits well to the setting of Schr\"odinger equation.   For the reader's convenience,    we recall the statement of their result in Appendix (see Theorem~\ref{T:KP}).

Let us first slightly modify the definition  (\ref{E:oml}) of the set $\UU_\ty(z_0,u)$. Let $T_n\ri +\ty$ be the sequence defined in Section \ref{S:Gcayin}. Define 
\begin{equation}\label{E:oml2}
\UU_\ty(z_0,u):=\{z\in H^{3}_{(V)}: \UU_{T_{n_k}} (z_0,u)  \rightharpoonup z\,\,\,\text{in $H^{3}$}
  \,\,\text{for some}\,\,   n_k\rightarrow+\ty \}.
\end{equation}
Consider the    multivalued function
\begin{align}
\UU_\ty(\cdot,\cdot):S\cap H^{3}_{(V)} \times \te&\ri  2^{S\cap H^{3}_{(V)}} , \nonumber \\
(z_0,u)&\ri \UU_\ty(z_0,u). \nonumber\end{align} 
Since the result of Nachi and Penot   is stated in the case of Banach spaces, we cannot apply it directly to $\UU_\ty$.
Following Beauchard and Laurent \cite{KBCL}, we project the system   onto the tangent space $T_{\tilde z}$. We apply Theorem~\ref{T:KP} to the following  multivalued function
\begin{align}
\tilde\UU_\ty(\cdot,\cdot) :T_{\tilde z}\cap H^{3}_{(V)} \times  \te&\ri 2^{T_{\tilde z}\cap H^{3}_{(V)}} , \nonumber \\
(z_0,u)&\ri   P \UU_\ty(P ^{-1} z_0,u) , \nonumber
\end{align} where $P $ is the orthogonal projection in $L^2$ onto $T_{\tilde z}$, i.e., $Pz= z-\Re\lag z, \tilde z\rag \tilde z, z\in L^2$. Notice that   $P ^{-1}:B_{T_{\tilde z}}(0,\de)\ri S$ is well defined for sufficiently small $\de>~0$.
By the definition of $T_n$, we have $\lim_{n\ri +\ty}\UU_{T_n}(\tilde z,0)=\tilde z $. Hence (\ref{E:oml2}) implies that $\UU_\ty(\tilde z,0)= \tilde z $ and  $\tilde\UU_\ty(0,0)=\{0\}$.
 If we show that   $\tilde\UU_\ty $ is strictly differentiable at $(x_0,y_0)$ with $x_0=(0,0)\in T_{\tilde z}\cap H^{3}_{(V)} \times  \te$ and $y_0=0\in   T_{\tilde z}\cap H^{3}_{(V)}$ (see Definition~\ref{D:1}),  and the derivative admits a right inverse, then Theorem~\ref{T:AnvL} will be proved  as a consequence of   Theorem \ref{T:KP}.
 
\begin{proposition}\label{P:1}
The multifunction  
  $\tilde\UU_\ty $ is strictly differentiable at $(0, 0)\in T_{\tilde z}\cap H^{3}_{(V)} \times  \te $ in the sense of Definition \ref{D:1}. Moreover, the differential is the mapping 
  \begin{align}
  R_\ty(\cdot,\cdot) :T_{\tilde z}\cap H^{3}_{(V)} \times  \te&\ri  {T_{\tilde z}\cap H^{3}_{(V)}  } , \nonumber \\
(z_0,u)&\ri    R_\ty(  z_0,u), \nonumber
\end{align}
 where $ R_\ty$ is defined in Section \ref{S:Gcayin}. \end{proposition}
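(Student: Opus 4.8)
The plan is to reduce strict differentiability of the multifunction $\tilde\UU_\ty$ to a single uniform-in-time estimate for the finite-horizon flows, and then to transfer that estimate to the $\omega$-limit by a weak-compactness argument. The starting observation is that $R_t$ is the Fréchet derivative of the nonlinear resolving operator $\UU_t$ at $(\tilde z,0)$: linearizing $i\dot z=-z_{xx}+Vz+uQz$ about the trajectory $\UU_t(\tilde z,0)$ in the direction $(h,v)$ gives exactly problem (\ref{E:hav2})--(\ref{E:sp2}), whose resolving operator is $R_t$. Since $R_t(h,0)=S(t)h$ and, by the choice of $\{T_n\}$ in Lemma \ref{L:mlk}, $S(T_n)h\ri h$ in $H^3_{(V)}$, the infinite-time differential acts by $R_\ty(h,u)=h+R_\ty(0,u)\in T_{\tilde z}\cap H^3_{(V)}$. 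Because $P^{-1}(z_0)=z_0+\sqrt{1-\|z_0\|^2}\,\tilde z$ is smooth with $D(P^{-1})(0)$ equal to the inclusion of $T_{\tilde z}$, and because $P$ restricts to the identity on $T_{\tilde z}$, the formal chain rule already predicts the differential $R_\ty$; the content of the proposition is to make this rigorous in the sense of Definition \ref{D:1}.

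First I would prove the key estimate: there is a constant $C$ such that, for all $a=(z_0,u),a'=(z_0',u')$ in $B_{T_{\tilde z}\cap H^3_{(V)}\times\te}(0,\delta)$,
\begin{equation}\label{E:plan1}
\sup_{n\ge1}\big\|\UU_{T_n}(P^{-1}z_0',u')-\UU_{T_n}(P^{-1}z_0,u)-R_{T_n}(a'-a)\big\|_{3,V}\le C\delta\,\|a'-a\|.
\end{equation}
Writing $\phi_t=\UU_t(P^{-1}z_0',u')$, $\psi_t=\UU_t(P^{-1}z_0,u)$, $r_t=R_t(a'-a)$ and $\zeta_t=\phi_t-\psi_t-r_t$, a direct computation using the three governing equations shows that $\zeta$ solves a problem of the form (\ref{E:hav5}) with control $u'$ for the potential and source
\begin{equation*}
u'Qr_t+(u'-u)Q\big(\psi_t-\UU_t(\tilde z,0)\big).
\end{equation*}
Now $\sup_t\|r_t\|_{3,V}\le C\|a'-a\|$ (the free part is an $H^3_{(V)}$-isometry and the Duhamel part is bounded as in Lemma \ref{L:gc1}), while $\sup_t\|\psi_t-\UU_t(\tilde z,0)\|_{3,V}=O(\delta)$ by Corollary \ref{C:Ka}; the initial datum obeys $\|\zeta_0\|_{3,V}\le C\delta\|a'-a\|$ by the smoothness of $P^{-1}$. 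Feeding these bounds, together with $\|u\|,\|u'\|=O(\delta)$, into the estimate (\ref{E:S1}) of Proposition \ref{L:lav} applied to $\zeta$ yields (\ref{E:plan1}). The crucial point, and the main obstacle, is that (\ref{E:S1}) is \emph{uniform in $t$}: finite-horizon differentiability is classical, but the right inverse of $R_\ty$ lives on the unbounded time set $\{T_n\}$, and it is precisely the $\BB$-norm built into $\te$ that keeps the Gronwall exponential in (\ref{E:S1}) bounded and the remainder $O(\delta\|a'-a\|)$ regardless of how large $T_n$ is.

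Finally I would pass to the $\omega$-limit. Fix $y\in\tilde\UU_\ty(z_0,u)$; by (\ref{E:oml2}) there are $\hat y$ with $y=P\hat y$ and a subsequence $T_{n_k}$ with $\UU_{T_{n_k}}(P^{-1}z_0,u)\rightharpoonup\hat y$ in $H^3_{(V)}$. Since (\ref{E:S1}) bounds $\UU_{T_{n_k}}(P^{-1}z_0',u')$ in $H^3_{(V)}$, a further extraction gives a weak limit $\hat y'$, and $y':=P\hat y'\in\tilde\UU_\ty(z_0',u')$. Using $R_{T_{n_k}}(a'-a)\rightharpoonup R_\ty(a'-a)$, the weak continuity of the bounded operator $P$, and the weak lower semicontinuity of $\|\cdot\|_{3,V}$, the estimate (\ref{E:plan1}) passes to the limit to give $\|y'-y-R_\ty(a'-a)\|_{3,V}\le C\delta\|a'-a\|$. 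Interchanging the roles of $a$ and $a'$ produces, for every $y'\in\tilde\UU_\ty(z_0',u')$, a matching $y\in\tilde\UU_\ty(z_0,u)$ with the symmetric bound. Since $C\delta\to0$ as $\delta\to0$, these two one-sided estimates are exactly the quantitative requirement of Definition \ref{D:1}, so $\tilde\UU_\ty$ is strictly differentiable at $(0,0)$ with continuous linear differential $R_\ty$ (continuity being furnished by Lemmas \ref{L:Linlav} and \ref{L:gc1}), which is what had to be shown and makes Theorem \ref{T:KP} applicable.
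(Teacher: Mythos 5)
Your proof is correct and takes essentially the same route as the paper: both reduce strict differentiability to a uniform-in-time estimate on the second difference $\UU_t(P^{-1}z_0',u')-\UU_t(P^{-1}z_0,u)-R_t(a'-a)$, identify the Schr\"odinger problem this difference solves, close the estimate with (\ref{E:S1}), Corollary \ref{C:Ka} and the smallness of the initial datum, and then transfer it to the $\omega$-limit sets. The only deviations are cosmetic: your grouping keeps the bilinear term $u'Q\zeta$ and the source $u'Qr_t$, so you need the extra bound $\sup_t\|r_t\|_{3,V}\le C\|a'-a\|$ --- which is true, but follows from (\ref{E:S1}) applied to $r_t$ (zero bilinear control, source $(u'-u)Q\,\UU_t(\tilde z,0)$) rather than ``as in Lemma \ref{L:gc1}'', whose argument only controls the full-time integrals $\check u(\om_{mk})$ --- and you spell out the weak-compactness passage to the limit that the paper dismisses as clear from the definition of $e(\cdot,\cdot)$.
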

 \begin{proof}[Proof of Theorem \ref{T:AnvL}] 
 \vspace{6pt}\textbf{Case 1.} Let us suppose that   $\tilde z\in S \cap H^{3}_{(V)} \setminus \EE$. For any $(z_0,u)\in B_{ T_{\tilde z}\cap H^{3}_{(V)} \times  \te}(0,\de)$, the set $\tilde\UU_\ty(z_0,u)$ is closed and non-empty, if $\de >0$ is sufficiently small.
   The mapping $  R_\ty $ is invertible in view of Theorem \ref{T:Lin}. Thus Theorem \ref{T:KP} completes the proof.
    \begin{remark}\label{R:11} Let us point out that in case 1 the controls $u$ can be chosen such that $u(0)= \ldots=u^{(s-1)}(0)=0$.
    \end{remark}

 \vspace{6pt}\textbf{Case 2.} In the case   $\tilde z\in S \cap H^{3}_{(V)} \cap  \EE$, the linearized system (\ref{E:hav2}), (\ref{E:ep2}) is not controllable, and $R_\ty$ is not invertible. 
 Controllability in   finite time  near $\tilde z $    is obtained combining the results  of \cite{BeCo} and \cite{KBCL}: there is a constant $\de>0$ and a time $T>0$ such that for any $z_0, z_1\in S\cap  B_{H^3_{(V)}}(\tilde z,\de)$ there is a control $v\in L^2([0,T], \R)$ verifying $\UU_T(  z_0,v)=z_1$. Let us prove that the problem is exactly controllable in infinite time in $S\cap  B_{H^3_{(V)}}(\tilde z,\de)$. Take any $z_1\in S\cap  B_{H^3_{(V)}}(\tilde z,\de)$ and let us show that there is a control $u\in\te$ such that $z_1\in \UU_\ty(\tilde z,u)$. Let us suppose first that $z_1\notin \EE $. Then, by case 1, there is $\de_{z_1}>0$ such that  exact controllability in infinite time holds in  $S\cap  B_{H^3_{(V)}}(  z_1,\de_{z_1})$. By exact controllability property in finite time and by an approximation argument, one can find a control $u_1\in C^\ty_0((0,T), \R)$ such that $\UU_T(\tilde z, u_1)\in  B_{H^3_{(V)}}( z_1,\de_{z_1})$. Thus  the existence of $u_1$ follows from case~1 and Remark \ref{R:11}.
 
 Now let us suppose that $z_1\in \EE$. Since   $\EE\subset \cap_{k=1}^\ty H_{(V)}^k$, by \cite{BeCo} and \cite{KBCL}, there is a control $u_1 \in C^s([0,T], \R)$ such that $\UU_T(\tilde z, u_1) =z_1 $ and $u(0)=\ldots =u^{(s)}(0)=u(T)=\ldots =u^{(s)}(T)=0$. Extending $u_1$ by $0$ on $[T,+\ty)$, we obtain $z_1\in \UU_\ty(\tilde z, u_1)$.
 
   \end{proof}
 \begin{proof}[Proof of Proposition \ref{P:1}] 
   It suffices to show that    for
any $\e> 0$ there exists $ \de > 0$ for which
\begin{equation}\label{E:hpz}
e(\tilde\UU_\ty( z_0,u) -R_\ty( z_0,u), \tilde\UU_\ty( z_0',u')-R_\ty( z_0',u'))\le \e  \|(z_0,u)-(z_0',u')\| _{T_{\tilde z}\cap H^{3}_{(V)} \times  \te},
\end{equation}
whenever $(z_0,u), (z_0',u') \in B_{T_{\tilde z}\cap H^{3}_{(V)} \times  \te}((0,0),\de)$. Here 
$e(\cdot,\cdot)$ stands for the Hausdorff distance (see Appendix for the definition). It is clear from the definition of $e(\cdot,\cdot)$, that (\ref{E:hpz}) follows  from the following stronger estimate
\begin{align*}  \sup_{t\in\R_+ }   \| \UU_{t }(P^{-1}z_0 ,u)   & - R_{t }( z_0,u)   -  \UU_{t }(P^{-1}z_0',u')+R_{t }( z_0',u')\|_{T_{\tilde z}\cap H^{3}_{(V)}}
\nonumber\\& \le\e  \|(z_0,u)-(z_0',u')\| _{T_{\tilde z}\cap H^{3}_{(V)} \times  \te}.
\end{align*}
To prove this estimate, notice that the function
  \begin{align*}
y(t)&:=  \UU_{t }(P^{-1}z_0 ,u)    - R_{t }( z_0,u)   -  \UU_{t }(P^{-1}z_0',u')+R_{t }( z_0',u')\nonumber 
 \end{align*}
 is a solution of the problem  \begin{align*}
i\dot y &= -\frac{\dd^2y}{\dd x^2} +  (u-u')Q\big(\UU_t(P^{-1}z_0,u)-\UU_t(\tilde z,0) \big) \\&\quad  \quad \quad \quad \quad \quad \quad \quad     \quad \quad \quad \quad \quad \quad \quad +u'Q \big(\UU_t(P^{-1}z_0,u)-\UU_t(P^{-1}z_0',u) \big),\\
y\arrowvert_{\partial D}&=0, \\
 y(0,x)&=  P^{-1}z_0     -  z_0     -  P^{-1}z_0' +  z_0'  . 
\end{align*} We have\begin{equation} \label{E:haz}
\|  y(0)  \|_{3,V} \le \e \|    z_0-  z_0'   \|_{3,V} 
\end{equation} for  any $z_0,z_0' \in B_{T_{\tilde z}\cap H^{3}_{(V)}  }(0 ,\de)$ and for sufficiently small $\de>0$.
   Using (\ref{E:S1}) (we use the version of the inequality with $v_1f_1+v_2f_2$ instead of $vf$),   Corollary \ref{C:Ka} and (\ref{E:haz}), we get 
\begin{align*}
\sup_{t\in\R_+}\| y(t)\|_{3,V} &\le C\big(\|  y(0)  \|_{3,V}+  \sup_{t\in \R_+} \|  \UU_t(P^{-1}z_0,u)-\UU_t(\tilde z,0)\|_{3,V}  \|u-u'  \|_{ \te}    \\&\quad +   \sup_{t\in \R_+} \|  \UU_t(P^{-1}z_0,u) -\UU_t(P^{-1}z_0',u)\|_{3,V}   \|u' \|_{ \te}   \big)  \nonumber\\ &\le C \big(\|  y(0)  \|_{3,V}\!+\!
(\|z_0\|_{3,V}\!+\!\|u \|_{ \te})  \|u\!-\!u'  \|_{ \te}\!+\!  \|z_0-z_0'\|_{3,V} \|u'  \|_{ \te}\big)\nonumber\\&\le \e \|(z_0,u)-(z_0',u')\| _{T_{\tilde z}\cap H^{3}_{(V)} \times  \te} \end{align*}
 for sufficiently small $\de$. This proves the proposition.  \end{proof}

\section{Non-controllability result}\label{S:HIMTA}

\subsection{Main result}\label{S:PREM}

In this section, we study the problem of non-controllability of
Schr\"odinger system (\ref{E:hav1})-(\ref{E:sp1}), where $D\subset\R^d$ is a bounded domain with smooth
boundary, $V, Q \in C^\ty(\overline{D},\R)$ are  arbitrary given
functions.
The following lemma establishes the well-posedness of system
(\ref{E:hav1})-(\ref{E:sp1}) in the space $L^2$.
\begin{lemma}\label{L:LD}

For any   $z_0\in L^2$  and for any
  $u\in L^1_{loc}(\R_+,\R)$,
  problem
(\ref{E:hav1})-(\ref{E:sp1}) has a unique solution $z\in
C(\R_+, L^2)$. Furthermore, the resolving operator
$\UU_t(\cdot,u):L^2\rightarrow~L^2 $ taking $z_0$ to
    $z(t)$ satisfies the relation
\begin{align}
\|\UU_t(z_0,u)\|&=\|z_0\|,\,\,\,t\ge0.\nonumber
\end{align}
\end{lemma}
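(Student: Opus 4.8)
The plan is to regard (\ref{E:hav1})--(\ref{E:sp1}) as a bounded time-dependent perturbation of the free Schr\"odinger group, solve it in mild (Duhamel) form by a contraction mapping argument, and then obtain the norm conservation from the self-adjointness of the instantaneous Hamiltonian, passing from regular to general data by an approximation argument.

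First I would recall that $-\Delta+V$ with domain $\DD(-\Delta+V)=H_0^1\cap H^2$ is self-adjoint on $L^2$, so by Stone's theorem $-i(-\Delta+V)$ generates a strongly continuous unitary group $S(t):=e^{-it(-\Delta+V)}$ satisfying $\|S(t)\varphi\|=\|\varphi\|$ for every $\varphi\in L^2$ and $t\in\R$. Since $Q\in C^\ty(\overline D,\R)$, multiplication by $Q$ is a bounded self-adjoint operator on $L^2$ with $\|Qz\|\le\|Q\|_{L^\ty}\|z\|$. The mild formulation of the problem is
\[
z(t)=S(t)z_0-i\int_0^t S(t-s)\,u(s)\,Q\,z(s)\,\dd s .
\]

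Next I would establish local well-posedness by a fixed point argument for the map $\Phi$ defined by the right-hand side above on the space $C([0,T],L^2)$. Using unitarity of $S$ and boundedness of $Q$ one gets, for two candidate solutions $z_1,z_2$,
\[
\sup_{t\in[0,T]}\|(\Phi z_1)(t)-(\Phi z_2)(t)\|\le \|Q\|_{L^\ty}\Big(\int_0^T|u(s)|\,\dd s\Big)\,\sup_{t\in[0,T]}\|z_1(t)-z_2(t)\|,
\]
so choosing $T$ with $\|Q\|_{L^\ty}\int_0^T|u|<1$ makes $\Phi$ a contraction, which yields a unique solution on $[0,T]$. The crucial point is that the length of the existence interval depends only on the smallness of $\int|u|$ and not on the size of the data; since $u\in L^1_{loc}(\R_+,\R)$, the absolutely continuous function $s\mapsto\int_0^s|u|$ lets me partition any $[0,T^*]$ into finitely many subintervals on each of which $\Phi$ contracts. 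Concatenating these local solutions gives a global solution $z\in C(\R_+,L^2)$. The same Duhamel estimate combined with the Gronwall inequality gives, for two initial data $z_0,z_0'$ and the same $u$,
\[
\|z(t)-z'(t)\|\le\|z_0-z_0'\|\,\exp\Big(\|Q\|_{L^\ty}\int_0^t|u(s)|\,\dd s\Big),
\]
which furnishes both uniqueness and Lipschitz dependence on $z_0$.

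For the conservation $\|\UU_t(z_0,u)\|=\|z_0\|$, the formal computation is immediate: if $z$ is regular enough then
\[
\frac{\dd}{\dd t}\|z(t)\|^2=2\Re\lag\dot z,z\rag=2\Re\big(-i\lag[(-\Delta+V)+u(t)Q]z,z\rag\big)=0,
\]
because $(-\Delta+V)+u(t)Q$ is self-adjoint (real potential, real control), so the pairing is real and multiplication by $-i$ leaves a purely imaginary number with vanishing real part. The hard part is that this identity is only legitimate for a strong solution, whereas a general $z_0\in L^2$ with $u\in L^1_{loc}$ produces only a mild solution. I would therefore first prove the identity for regular data $z_0\in\DD(-\Delta+V)$ and, say, $u\in C(\R_+,\R)$, for which standard bounded-perturbation semigroup theory guarantees $z\in C^1(\R_+,L^2)\cap C(\R_+,\DD(-\Delta+V))$ and the computation above is rigorous. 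Then I would pass to the limit using density of $\DD(-\Delta+V)$ in $L^2$ and of $C(\R_+,\R)$ in $L^1_{loc}(\R_+,\R)$: approximating $(z_0,u)$ by regular data $(z_0^n,u^n)$, splitting the Duhamel difference of the corresponding solutions as $u^nQ(z^n-z)+(u^n-u)Qz$, and invoking the Gronwall bound together with the uniform bound $\|z^n(s)\|=\|z_0^n\|$ shows $z^n\to z$ in $C([0,T],L^2)$ for every $T$. Letting $n\to\ty$ in $\|z^n(t)\|=\|z_0^n\|$ yields $\|z(t)\|=\|z_0\|$, completing the proof.
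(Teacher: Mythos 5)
Your proof is correct. The paper does not actually prove Lemma \ref{L:LD} at all---it simply refers the reader to Cazenave's book \cite{CW}---and your argument (mild formulation via the unitary group $e^{-it(-\Delta+V)}$, contraction on subintervals where $\|Q\|_{L^\ty}\int|u|<1$, Gronwall for uniqueness, and norm conservation by self-adjointness for regular data followed by density) is exactly the standard proof that the cited reference supplies, so you have merely filled in the details the paper delegates to the literature.
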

See  \cite{CW} for the proof. Let us define the set of
attainability of system (\ref{E:hav1}), (\ref{E:ep1}) from an
initial point $z_0\in S$:
\begin{equation}\label{E:Att}
\aA(z_0):=\{\UU_t(z_0,u): \text{ for all $u\in
W^{1,1}_{\text{loc}}(\R_+,\R)$ and $t\ge0$ }\} .
\end{equation}

The following theorem is the main result of this section.
\begin{theorem}\label{T:NC} For any constant 
$k\in(0,d)$, 
any initial condition $z_0\in S$ and any ball $B\subset H^k_{(V)}$, we have
$$
\aA^c(z_0)\cap B\cap S\neq \varnothing.
$$
\end{theorem}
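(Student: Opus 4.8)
The plan is to argue by contradiction, comparing the Kolmogorov $\e$-entropy of the attainable set with that of the ball $B$, following the scheme of Shirikyan \cite{SHEU} and its all-times extension \cite{HANE}. For a relatively compact $K$ in a metric space write $H_\e(K)$ for $\log_2$ of the minimal number of $\e$-balls covering $K$. Since $\UU_t(z_0,\cdot)$ preserves the $L^2$-norm (Lemma \ref{L:LD}), the whole set $\aA(z_0)$ lies on $S$, so it suffices to show $B\cap S\not\subseteq\aA(z_0)$. Assuming the contrary, $B\cap S\subseteq \aA(z_0)\cap B$, and monotonicity of covering numbers gives $H_\e(B\cap S)\le H_\e(\aA(z_0)\cap B)$ for every $\e>0$. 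The whole proof reduces to contradicting this for some small $\e$ by producing a lower bound for the left-hand side and an upper bound for the right-hand side, with exponents forced apart by the hypothesis $k<d$.

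For the lower bound I would use the classical asymptotics of the $\e$-entropy of the embedding $H^k_{(V)}\hookrightarrow L^2$ on a $d$-dimensional domain, namely $H_\e(B_{H^k_{(V)}}(0,1),L^2)\asymp \e^{-d/k}$. Intersecting an $H^k_{(V)}$-ball with the unit sphere does not destroy this rate: choosing $\zeta$ in the interior of $B$ with $\|\zeta\|=1$, one converts an $\e$-separated family of an $H^k_{(V)}$-ball into a family of the same cardinality lying in $B\cap S$ and $c\e$-separated in $L^2$ (radial projection onto $S$ is locally Lipschitz away from $0$ and only mildly contracts $L^2$-distances). This yields $H_\e(B\cap S,L^2)\ge c\,\e^{-d/k}$.

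The core of the matter is the upper bound on $H_\e(\aA(z_0)\cap B,L^2)$, which rests on the fact that $u$ is a scalar function of the single time variable. The resolving operator $u\mapsto\UU_t(z_0,u)$ is Lipschitz from the controls, measured in an $L^1$-type norm, into $L^2$ (the $L^2$-analogue of Corollary \ref{C:Ka}, following \cite{CW}), so the $L^2$-entropy of the image of a ball of controls on a time interval $[0,T]$ is dominated by the $\e$-entropy of that control ball. Being a ball of functions of one variable of smoothness one ($u\in W^{1,1}_{\mathrm{loc}}$), the latter grows only like $T\,\e^{-1}$. As $k<d$ forces $d/k>1$, for small $\e$ this beats $\e^{-d/k}$, so the image of any fixed control ball on a fixed interval cannot cover $B\cap S$; this is the assertion announced in the introduction.

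The genuine difficulty, and the step I expect to be the main obstacle, is upgrading this from a fixed control ball on a fixed interval to the full set $\aA(z_0)$, which involves arbitrarily large $t$ and controls of arbitrarily large norm. Indeed, unlike the target regularity, membership of $\UU_t(z_0,u)$ in $B$ only bounds $\int_0^t|u|$ from \emph{below} (via two-sided bounds of the form $\|z_0\|_{k,V}e^{-C\int|u|}\le\|\UU_t(z_0,u)\|_{k,V}\le\|z_0\|_{k,V}e^{C\int|u|}$, valid when $z_0\in H^k_{(V)}$, the complementary case being trivial), so it does not cap the control. To close this, I would write $\aA(z_0)=\bigcup_{n\ge1}E_n$ with $E_n:=\{\UU_t(z_0,u):t\le n,\ \|u\|\le n\}$, each $E_n$ an $L^2$-compact set of entropy $\le C(n)\,\e^{-1}$, and then control the union by the device of \cite{HANE}: one chooses a truncation level $n=n(\e)$ growing slowly enough (below the threshold $\e^{-(d-k)/k}$ dictated by the gap between the exponents $1$ and $d/k$) to keep the accumulated entropy $o(\e^{-d/k})$, while showing that the states of $B\cap S$ not captured by $E_{n(\e)}$ occupy, at scale $\e$, only a vanishing fraction of the cover — so that a definite proportion of the $\e$-balls covering $B\cap S$ contain no attainable point. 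Comparing with the lower bound then yields $H_\e(\aA(z_0)\cap B)<H_\e(B\cap S)$ for some small $\e$, contradicting the assumption and proving the theorem.
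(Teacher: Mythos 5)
Your overall strategy (comparing Kolmogorov $\e$-entropies, following Shirikyan) is the same as the paper's, and your fixed-ball upper bound is essentially Steps 1--2 of the paper's proof. But the step you yourself flag as the main obstacle --- passing from one compact set $E_n$ (bounded time, bounded control norm) to the countable union $\aA(z_0)=\bigcup_n E_n$ --- is not closed by your truncation device, and it cannot be closed by any purely entropy-theoretic counting. Entropy gives no control over countable unions: a countable union of singletons, each of entropy zero, can be dense in $B\cap S$, so no choice of truncation level $n(\e)$ and no ``vanishing fraction'' estimate follows from the entropy bounds on the $E_n$ alone; for each fixed $n$, the uncaptured set $B\cap S\setminus E_n$ may a priori meet every $\e$-ball of the cover. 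What is missing is a topological ingredient, and that is exactly how the paper proceeds: assuming $B\cap S\subset\bigcup_m\UU(B_m)$ with $B_m=[0,m]\times B_{W^{1,1}([0,m],\R)}(0,m)$, the \emph{Baire lemma} yields a single $m$ and a smaller ball $Q\subset H^k_{(V)}$ such that $\UU(B_m)$ is dense in $Q\cap S$; then compactness of $\tilde B_m=\{(t,u)\in B_m:\UU_t(z_0,u)\in Q\}$ in $[0,m]\times L^1$ makes $\UU(\tilde B_m)$ compact, hence closed, so density upgrades to the exact equality $Q\cap S=\UU(\tilde B_m)$, and only then is the entropy comparison run --- on $Q$, for that single $m$. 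Your scheme never produces this single-$m$ reduction; note also that your framing ``contradict $H_\e(B\cap S)\le H_\e(\aA(z_0)\cap B)$'' is vacuous as stated, since under the contradiction hypothesis both sides are automatically large --- the contradiction must come from a single compact piece exactly covering a sub-ball.

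A secondary flaw concerns your lower bound. Radial projection onto $S$ does \emph{not} ``only mildly contract'' $L^2$-distances: $\zeta$ and $(1+\e)\zeta$ are $\e$-separated yet have the same projection, so an $\e$-separated family in an $H^k_{(V)}$-ball can collapse under normalization, and your claimed conversion into a $c\e$-separated family in $B\cap S$ fails. The paper's Lemma \ref{L:Hk} gets the same conclusion by arguing in the opposite direction: the Lipschitz map $f(s,z)=sz$ on $[\frac12,\frac32]\times(B\cap S)$ has image containing a ball $\tilde B$ of $H^k_{(V)}$, so by the monotonicity Lemma \ref{L:KE} the known lower bound for $H_\e(\tilde B)$ transfers to $H_\e(B\cap S)$ up to a harmless $\ln(1/\e)$ term. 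This is fixable in your setting (use the paper's product map, or parametrize $S$ near $\zeta$ as a Lipschitz graph over the tangent hyperplane), but as written the step is incorrect. By contrast, your choice of measuring entropies in $L^2$ (exponents $d/k>1$) rather than the paper's $H^{k-1}$ (exponents $d>k$) is equivalent bookkeeping and not an issue.
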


Let us emphasize that this theorem does not exclude exact controllability in $H^k_{(V)}$ with controls form a larger space than $W^{1,1}_{\text{loc}}(\R_+,\R)$.

The proof of this theorem is an adaptation of ideas of Shirikyan
\cite{SHEU} to the case of   Schr\"odinger equation. Using a H\"older type  estimate for the solution of the equation, we show that the image by the resolving operator $\UU$ of a ball in the space of   controls  has a Kolmogorov $\e$-entropy strictly less than that  
of a ball $B$ in the phase space $H^k_{(V)}$. As we show, this implies the non-controllability.

\subsection{Some  $\e$-entropy estimates}

Let $X$ be a Banach space. For any compact set $K\subset X$ and
$\e>0$, we denote by $N_\e(K,X)$  the minimal number of sets of
diameters $\le2\e$ that are needed to cover $K$. The Kolmogorov
$\e$-entropy  of $K$ is defined as $H_\e(K,X) = \ln N_\e(K,X)$.

Let $Y$ be another Banach space and let $f : K \rightarrow Y$ be
a
H\"older continuous function:
\begin{equation}\label{E:Lip}
\|f(u_1)-f(u_2)\|_Y\le L\|u_1-u_2\|_X^\theta \end{equation}for
any
$u_1,u_2\in K$ and for some constants $L>0$ and $\theta\in
(0,1)$.
The following lemma follows immediately from the definition of
$\e$-entropy (cf. Lemma 2.1 in~\cite{SHEU}).
\begin{lemma}\label{L:KE} For any compact set $K \subset X$ and
any function $f : K \rightarrow Y$
satisfying inequality (\ref{E:Lip}), we have
$$
H_\e(f(K),Y)\le H_{ (\frac{\e}{L} )^{\frac{1}{\theta}}}(K,X)
\text{ for all $\e>0$}.
$$
\end{lemma}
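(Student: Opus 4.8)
The plan is to argue directly from the definition of the covering number $N_\e$, using the Hölder condition (\ref{E:Lip}) to control how set diameters transform under $f$. First I would fix $\e>0$ and set $\de:=(\e/L)^{1/\theta}$, which is exactly the scale appearing on the right-hand side. Since $f$ is Hölder it is continuous, so $f(K)$ is compact and the quantity $H_\e(f(K),Y)$ is well defined and finite. The strategy is then to transport an optimal cover of $K$ at scale $\de$ into a cover of $f(K)$ at scale $\e$.

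Next I would take an optimal cover of $K$ in $X$: set $N:=N_\de(K,X)$ and choose sets $A_1,\dots,A_N$ of $X$-diameter $\le 2\de$ with $K\subset\bigcup_{i}A_i$. Replacing each $A_i$ by $A_i\cap K$ (which does not increase its diameter) I may assume $A_i\subset K$, so that $f$ is defined on each $A_i$ and $f(K)\subset\bigcup_i f(A_i)$.

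The key step is the diameter estimate for the images. For any $u_1,u_2\in A_i$ one has $\|u_1-u_2\|_X\le 2\de$, hence by (\ref{E:Lip})
$$
\|f(u_1)-f(u_2)\|_Y\le L\|u_1-u_2\|_X^\theta\le L(2\de)^\theta=2^\theta L\de^\theta=2^\theta\e\le 2\e,
$$
where the last inequality uses $\theta<1$ and the penultimate equality uses $L\de^\theta=\e$ by the choice of $\de$. Thus each $f(A_i)$ has $Y$-diameter $\le 2\e$, so the $N$ sets $f(A_1),\dots,f(A_N)$ cover $f(K)$ by sets of diameter $\le 2\e$. By the minimality in the definition of $N_\e$ this gives $N_\e(f(K),Y)\le N=N_\de(K,X)$, and taking logarithms yields $H_\e(f(K),Y)\le H_\de(K,X)=H_{(\e/L)^{1/\theta}}(K,X)$, which is the claim.

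As to difficulty, there is essentially no obstacle here: the statement is a direct bookkeeping consequence of the definitions. The only points needing minor care are the intersection $A_i\cap K$ (so that $f$ is applied only to points of $K$) and the elementary bound $2^\theta\le 2$, which is precisely what makes the choice $\de=(\e/L)^{1/\theta}$ produce image sets of diameter at most $2\e$ rather than a slightly larger value.
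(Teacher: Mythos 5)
Your argument is correct and is precisely the direct verification that the paper has in mind: it states that Lemma \ref{L:KE} ``follows immediately from the definition of $\e$-entropy'' (citing Lemma 2.1 of \cite{SHEU}), and your transport of an optimal $2\de$-cover of $K$, with $\de=(\e/L)^{1/\theta}$, into a $2\e$-cover of $f(K)$ via the H\"older bound $L(2\de)^\theta=2^\theta\e\le 2\e$ is exactly that immediate argument, carried out carefully.
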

We also need the following two lemmas.
\begin{lemma}\label{L:W11} For any $T>0$ and for any closed ball
$B \subset W^{1,1}([0,T],\R)$, there is a constant $C>0$ such
that
$$
H_\e(B,L^1([0,T],\R))\le \frac{C}{\e}\ln\frac{1}{\e}.
$$
\end{lemma}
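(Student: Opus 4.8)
The plan is to exploit the one-dimensional structure of $W^{1,1}([0,T],\R)$: such functions are, up to an additive constant, antiderivatives of $L^1$ data, hence uniformly bounded and well approximated by piecewise constant functions on a fine mesh. Since the $\e$-entropy is invariant under translations, I may assume $B=\overline{B_{W^{1,1}([0,T],\R)}(0,R)}$. I would introduce two discretization parameters — an integer $n\ge1$ governing a uniform partition of $[0,T]$, and a scale $h>0$ governing the quantization of the sampled function values — and tune them against each other so that an $\e$-covering of $B$ in $L^1$ is produced with roughly $\exp(C\e^{-1}\ln\e^{-1})$ sets.

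First I would control the approximation error. Partition $[0,T]$ into $n$ intervals $[t_{j-1},t_j]$ of length $T/n$ and associate to $u\in B$ the piecewise constant function $\tilde u$ equal to $u(t_{j-1})$ on $[t_{j-1},t_j)$. Writing $u(t)-u(t_{j-1})=\int_{t_{j-1}}^t u'(s)\,\dd s$ and integrating in $t$, one obtains
\[
\|u-\tilde u\|_{L^1([0,T])}\le \frac{T}{n}\int_0^T|u'(s)|\,\dd s\le \frac{TR}{n}.
\]
Thus choosing $n$ to be the least integer $\ge 2TR/\e$ makes this error at most $\e/2$, so that an $\e/2$-net of the finite-dimensional set $\{\tilde u:u\in B\}$ in $L^1$ yields, by the triangle inequality, an $\e$-net of $B$; with the normalization of $N_\e$ used in the paper this gives $N_\e(B,L^1)\le N_{\e/2}(\{\tilde u\},L^1)$.

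It then remains to count the piecewise constant functions. Each $\tilde u$ is determined by the vector $(u(t_0),\dots,u(t_{n-1}))\in\R^n$, and the one-dimensional embedding $W^{1,1}\hookrightarrow L^\infty$ — which follows from $|u(t)|\le \frac1T\|u\|_{L^1}+\|u'\|_{L^1}\le C_TR$ after averaging $u(t)=u(s)+\int_s^t u'$ over $s$ — confines this vector to the cube $[-C_TR,C_TR]^n$. On the space of piecewise constant functions the $L^1$ distance of two such vectors equals $\frac{T}{n}\sum_j|a_j-b_j|$, so discretizing each coordinate on a grid of spacing $h=\e/(2T)$ produces cells of $L^1$-diameter $\le\e/2$; the number of grid points per coordinate is at most $4C_TRT/\e$, whence
\[
H_{\e/2}(\{\tilde u\},L^1)\le n\ln\frac{4C_TRT}{\e}\le \frac{C}{\e}\ln\frac1\e
\]
for all small $\e>0$, after inserting $n\le 1+2TR/\e$ and absorbing constants (which depend on $T$ and $R$). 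Combined with the reduction of the previous paragraph, this is the claim. The only delicate point is the trade-off in the choice $n\sim\e^{-1}$: a coarser mesh spoils the $L^1$ approximation bound, while a finer one inflates the ambient dimension $n$ and hence the per-coordinate logarithmic count — it is precisely the balance $n\sim\e^{-1}$, together with the uniform sup-norm bound on the sampled values, that yields the asserted $\e^{-1}\ln\e^{-1}$ rate rather than a worse power of $\e^{-1}$.
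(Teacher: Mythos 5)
Your proposal is correct. Note, however, that the paper does not actually prove this lemma: its entire ``proof'' is the citation ``This is Proposition 2.3 in \cite{SHEU}'', so what you have written is a self-contained reconstruction of the cited result rather than a variant of an argument given in the paper. Your scheme --- reduce to a ball centered at the origin by translation invariance, approximate each $u$ by the piecewise constant function sampling $u$ at the mesh points (error $\le TR/n$ by the fundamental theorem of calculus), use the embedding $W^{1,1}\hookrightarrow L^\infty$ to confine the sample vector to a cube, and quantize with mesh $h\sim\e$ so that the balance $n\sim\e^{-1}$ against the per-coordinate count $\sim\e^{-1}$ yields $H_\e\le C\e^{-1}\ln\e^{-1}$ --- is precisely the standard discretize-and-quantize argument by which such bounds are proved, and all the estimates check out: the $\e/2+\e/2$ split between approximation and quantization error is handled consistently with the paper's diameter-based definition of $N_\e$, and pointwise sampling is legitimate since $W^{1,1}$ functions have absolutely continuous representatives. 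One point you handle more carefully than the statement itself: the claimed inequality can only hold for small $\e$ (for $\e\ge1$ the right-hand side is nonpositive while the entropy is nonnegative), and your qualification ``for all small $\e>0$'' is the correct reading; this is harmless, since the lemma is invoked in the proof of Theorem \ref{T:NC} only in the limit $\e\to0$.
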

This is Proposition 2.3 in \cite{SHEU}.
  \begin{lemma}\label{L:Hk} For any   $k>0$ and   any closed ball $B:= \overline{B_{H^k_{(V)}}(z_0,r)}$ such that $B_{H^k_{(V)}}(z_0,r) \cap S \neq \varnothing$   there is a  constant $C>0$ such that
\begin{align}\label{E:edm2}
H_\e(B\cap S,H^{k-1} )\ge C  \Big(\frac{1}{\e}\Big)^{d}.
\end{align}
\end{lemma}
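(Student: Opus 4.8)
The plan is to bound $H_\e(B\cap S,H^{k-1})$ from below by exhibiting an \emph{exponentially large} family of points of $B\cap S$ that are pairwise separated by more than $2\e$ in the $H^{k-1}$-metric. Since any set of diameter $\le 2\e$ contains at most one point of such a family, this forces $N_\e(B\cap S,H^{k-1})$ to be at least the cardinality of the family, and taking logarithms yields the bound. I work throughout in the eigenbasis $\{e_{j,V}\}$ and with the norm $\|\cdot\|_{s,V}$, using that on the (smooth, boundary-adapted) functions at hand the norms $\|\cdot\|_{H^{k-1}}$ and $\|\cdot\|_{k-1,V}$ are equivalent, so it is enough to produce separation in $\|\cdot\|_{k-1,V}$. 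Fix $\hat z\in B_{H^k_{(V)}}(z_0,r)\cap S$ (nonempty by hypothesis) and set $\rho:=\tfrac12\bigl(r-\|\hat z-z_0\|_{k,V}\bigr)>0$, the room we have inside the ball.

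The building blocks are high-frequency eigenfunctions in a dyadic spectral window. By the Weyl asymptotics $\la_{j,V}\sim C_d\,j^{2/d}$ recalled in Section~\ref{S:MR}, the counting function satisfies $\#\{j:\la_{j,V}\le\La\}\sim c_d\La^{d/2}$, so for $\La$ large the index set $J:=\{j:\la_{j,V}\in[\La,2\La]\}$ has cardinality $n:=|J|\ge c\La^{d/2}$. On $\{\pm1\}^n$ the Gilbert--Varshamov bound furnishes a subset $\Sigma$ with $|\Sigma|\ge 2^{c_0 n}$ whose pairwise Hamming distances are all $\ge n/4$. To each $\sigma\in\Sigma$ I associate the perturbation $w_\sigma:=\sum_{j\in J}\sigma_j e_{j,V}$ and the point $z_\sigma:=\hat z+\delta w_\sigma$, where the amplitude $\delta:=\rho\,\bigl(\sum_{j\in J}\la_{j,V}^{k}\bigr)^{-1/2}$ is tuned so that $\|z_\sigma-\hat z\|_{k,V}=\rho$. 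Finally I renormalize, $\tilde z_\sigma:=z_\sigma/\|z_\sigma\|$, to land on $S$.

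Two facts must then be verified, and this is the heart of the matter. First, $\tilde z_\sigma\in B\cap S$: one has $\|z_\sigma\|^2=1+O(\La^{-k})$, because $\delta^2 n\asymp\rho^2\La^{-k}$ and, since $\hat z\in H^k_{(V)}$ with $k>0$, its spectral mass in the window is tiny, $\|\Pi_J\hat z\|\le\La^{-k/2}\|\hat z\|_{k,V}$ (here $\Pi_J$ is the $L^2$-projection onto $\mathrm{span}\{e_{j,V}:j\in J\}$); hence the normalization is a factor $1+O(\La^{-k})$, which keeps $\tilde z_\sigma$ within $\rho+o(1)<2\rho=r-\|\hat z-z_0\|_{k,V}$ of $\hat z$ in $H^k_{(V)}$, so $\tilde z_\sigma\in B$. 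Second, and this is the \emph{main obstacle}, the normalization must not collapse the $H^{k-1}$-separation. I handle this by projecting onto the window before measuring: since $\Pi_J$ only decreases the $\|\cdot\|_{k-1,V}$-norm,
$$\|\tilde z_\sigma-\tilde z_{\sigma'}\|_{k-1,V}\ \ge\ \|\Pi_J(\tilde z_\sigma-\tilde z_{\sigma'})\|_{k-1,V}.$$
The projected difference splits into a leading term $\delta(w_\sigma-w_{\sigma'})/\|z_\sigma\|$ and two corrections proportional to the scalar $\|z_\sigma\|^{-1}-\|z_{\sigma'}\|^{-1}=O(\La^{-k})$. From the Hamming separation, $\|w_\sigma-w_{\sigma'}\|_{k-1,V}^2\ge c'n\La^{k-1}$, so the leading term is $\gtrsim c_1\rho\,\La^{-1/2}$; meanwhile both corrections are $O(\La^{-1/2-k})$ once projected, using $\|\Pi_J\hat z\|_{k-1,V}\le\La^{-1/2}\|\hat z\|_{k,V}$ (as $\la_{j,V}^{k-1}\le\La^{-1}\la_{j,V}^{k}$ on $J$) and $\|\delta w_{\sigma'}\|_{k-1,V}\lesssim\rho\La^{-1/2}$. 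Hence for $\La$ large the corrections are negligible and $\|\tilde z_\sigma-\tilde z_{\sigma'}\|_{k-1,V}\ge\tfrac12 c_1\rho\,\La^{-1/2}$ for all distinct $\sigma,\sigma'\in\Sigma$.

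It remains to calibrate. Choosing $\La$ so that $\tfrac12 c_1\rho\,\La^{-1/2}>2\e$, i.e. $\La\asymp\e^{-2}$, makes $\{\tilde z_\sigma\}_{\sigma\in\Sigma}$ a $2\e$-separated subset of $B\cap S$, so that $N_\e(B\cap S,H^{k-1})\ge|\Sigma|\ge 2^{c_0 n}$ with $n\ge c\La^{d/2}\asymp\e^{-d}$. Taking logarithms gives $H_\e(B\cap S,H^{k-1})\ge c_0 n\ge C(1/\e)^d$, as claimed. The estimate is obtained for all sufficiently small $\e>0$ (equivalently $\La$ large), which is precisely the regime relevant to the entropy comparison of the next subsection.
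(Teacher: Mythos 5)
Your proof is correct, but it follows a genuinely different route from the paper's. The paper does not construct anything by hand: it invokes the known two-sided bound $C_1\e^{-d}\le H_\e(\tilde B,H^{k-1})\le C_2\e^{-d}$ for balls of $H^k$ (citing Edmunds--Triebel), and transfers the lower bound to $B\cap S$ by a scaling trick: the Lipschitz map $f(s,z)=sz$ sends $[\tfrac12,\tfrac32]\times(B\cap S)$ onto a set with non-empty interior in $H^k$, hence containing a ball $\tilde B$; then Lemma \ref{L:KE} together with subadditivity of entropy over the product $[\tfrac12,\tfrac32]\times(B\cap S)$ gives $C\e^{-d}\le H_\e(\tilde B,H^{k-1})\le C\bigl(\ln\tfrac1\e+H_\e(B\cap S,H^{k-1})\bigr)$, and the logarithm is absorbed. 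In other words, the paper handles the spherical constraint by radially ``fattening'' $B\cap S$ into a genuine ball, whereas you handle it by renormalizing a hand-built separated family back onto $S$ and checking that normalization neither expels the points from $B$ nor collapses the $H^{k-1}$-separation --- which is indeed the delicate step, and your window-projection argument with the $O(\La^{-k})$ corrections does settle it. What each approach buys: the paper's argument is a few lines given the cited entropy estimate and needs no spectral asymptotics; yours is self-contained (it effectively reproves the Edmunds--Triebel lower bound in this setting via Weyl's law and a Gilbert--Varshamov code) and produces an explicit $2\e$-separated family, at the price of more bookkeeping. Two small points to flag: you silently pass between $\|\cdot\|_{H^{k-1}}$ and $\|\cdot\|_{k-1,V}$ --- only the easy direction $\|\cdot\|_{k-1,V}\le C\|\cdot\|_{H^{k-1}}$ on elements of the form you construct is actually needed, so this is harmless, and the paper is no more careful on this score; and your bound, like the paper's, holds only for sufficiently small $\e$, which you correctly note is the relevant regime (as stated, with $H_\e=\ln N_\e$, the inequality cannot hold for large $\e$ anyway).
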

\begin{proof} It is well known that  
\begin{equation}\label{E:edm}
C_1  \Big(\frac{1}{\e}\Big)^{d}
 \le H_\e(B,H^{k-1} )\le C _2 \Big(\frac{1}{\e}\Big)^{d}
\end{equation} for some constants $C_1, C_2>0 $ (e.g., see \cite{EDTR}). Consider the mapping
\begin{align}
f: [\frac{1}{2}, \frac{3}{2}]\times B\cap S&\ri H^{k-1},\nonumber\\
(s,z)&\ri sz.\nonumber
\end{align}  The set $f( [\frac{1}{2}, \frac{3}{2}]\times B\cap S)$ has a non-empty interior, so there is a ball $\tilde B $ in $H^k$ such that  
\begin{align}\label{E:edm3}
\tilde B\subset f( [\frac{1}{2}, \frac{3}{2}]\times B\cap S).
 \end{align}Clearly,
$$
\|f(s_1,z_1)-f(s_2,z_2)\|_{k-1}\le C(|s_1-s_2|+ \|  z_1 -  z_2\|_{k-1}).
$$ Using (\ref{E:edm3}) and Lemma \ref{L:KE}, we get  
\begin{align}
H_\e(\tilde B,H^{k-1})& \le H_\e( f(  [\frac{1}{2}, \frac{3}{2}]\times B\cap S),H^{k-1}) \nonumber\\
&\le H_{\frac{ \e}{C}}(    [\frac{1}{2}, \frac{3}{2}]\times B\cap S,\R\times H^{k-1})\nonumber\\
&\le H_{\frac{\e}{C}}(    [\frac{1}{2}, \frac{3}{2}] ,\R )+ H_{\frac{ \e}{C}}(    B\cap S, H^{k-1}) \nonumber\\
 &\le C  \big(\ln  \frac{1}{\e} + H_{\e} (    B\cap S, H^{k-1})\big). \nonumber
\end{align}Combining this with (\ref{E:edm}) for $\tilde B$, we obtain (\ref{E:edm2}).
 \end{proof}

\subsection{Proof of Theorem \ref{T:NC}}

Let us suppose, by contradiction, that there is $k\in(0,d)$, an
initial point $z_0\in S$   and a ball $B\subset H^k_{(V)}$ such
that
\begin{equation}\label{E:gd}
 B\cap S\subset \aA(z_0)   ,
\end{equation}where $\aA$ is the set of attainability of system
(\ref{E:hav1}), (\ref{E:ep1}) from the initial point $z_0$
defined
by (\ref{E:Att}). Let us set
\begin{align}
B_m:&=[0,m]\times B_{W^{1,1}([0,m],\R)}(0,m),\nonumber\\
\UU (B_m):&=\{\UU_t(z_0,u):\text{ for all }(t,u)\in
B_m\}.\nonumber
\end{align}We have
\begin{align}
 \R\times W^{1,1}_{loc}(\R_+,\R)&=\bigcup_{m=1}^\ty
 B_m,\nonumber\\
 \aA(z_0)&=\bigcup_{m=1}^\ty \UU ( B_m).\label{E:Ba}
\end{align}Combining (\ref{E:gd}), (\ref{E:Ba}) and the Baire
lemma, we see that there is a ball $Q\subset H^k_{(V)}$ and an
integer $m\ge1$ such that $\UU ( B_m)$ is dense in $Q\cap S$
  with respect to $H^k$-norm.

 \vspace{6pt}\textbf{Step 1.} Let us define the set
$$ \tilde{B}_m=\{(t,u)\in B_m: \text{ such that }
\UU_t(z_0,u)\in Q\}.$$
 Here we prove that $ \tilde{B}_m$ is compact in   $[0,m]\times
 L^1([0,m],\R).$ Indeed, take any sequence $(t_n,u_n)\in
\tilde{B}_m$. As $(t_n,u_n)\in {B}_m$ and $B_m$ is compact in
$[0,m]\times L^1([0,m],\R)$, there is a
 sequence $n_k\rightarrow\ty$ and $(t_0,u_0)\in B_m$ such that
 $$|t_{n_k}-t_0|+\|u_{n_k}-u_0\|_{L^1([0,m],\R)}\rightarrow0,
 k\rightarrow\ty.$$ We need to show that  $(t_0,u_0)\in
\tilde{B}_m$. As $\UU_{t_{n_k}}(z_0,u_{n_k})\in Q$, there is
$z\in
 Q$ such that $\UU_{t_{n_k}}(z_0,u_{n_k})\rightharpoonup z$ in
$H^k$ (again extracting a subsequence, if necessary). On the
other
 hand, Lemma \ref{L:LD} implies that
$\UU_{t_{n_k}}(z_0,u_{n_k})\rightarrow\UU_{t_0}(z_0,u_0)$ in
$L^2$. Thus $\UU_{t_0}(z_0,u_0)=z$ and  $(t_0,u_0)\in
\tilde{B}_m$. Thus $ \tilde{B}_m$ is compact in $[0,m]\times
 L^1([0,m],\R).$

In particular, this implies that $\UU (\tilde{B}_m)$ is compact
in
$L^2$,  as an image of a compact set by a continuous mapping. On the other hand, $\UU (\tilde{B}_m)$ is dense in the compact set
$Q\cap S$ in $L^2$. Thus $Q\cap S= \UU (\tilde{B}_m)$.

\vspace{6pt}\textbf{Step 2.} Using standard arguments, one can
show that we have
 $$
 \|\UU_t(z_0,u)-\UU_{t'}(z_0,u')\|\le
 C(|t-t'|+\|u-u'\|_{L^1([0,m],\R)}) 
 $$for any $(t,u), (t',u')\in
   \tilde{B}_m$, where $C>0$ is a constant not depending on $(t,u)$ and
$(t',u')$. Combining this with the interpolation inequality
$$\|z\|_{k-1}\le
 C\|z\|^{\frac{1}{k}}\|z\|_k^{\frac{k-1}{k}},$$we get
  $$
 \|\UU_t(z_0,u)-\UU_{t'}(z_0,u')\|_{k-1}\le
 C(|t-t'|^{\frac{1}{k}}+\|u-u'\|_{L^1([0,m],\R)}^{\frac{1}{k}})
 $$ for any $(t,u), (t',u')\in
\tilde{B}_m$. Here we used the fact that $\UU_t(z_0,u),
\UU_{t'}(z_0,u')\in
  Q$.
 Appying Lemmas \ref{L:KE} and \ref{L:W11} and of the
fact that $Q\cap S\subset\UU (\tilde{B}_m)$, we obtain 
\begin{align}
H_\e(Q\cap S, H^{k-1})&\le H_\e(\UU (\tilde{B}_m), H^{k-1}) \le
C H_{\e^k}(\tilde{B}_m, [0,m]\times L^1([0,m],\R))\nonumber\\&
  \le C  H_{\e^k}( {B}_m, [0,m]\times
L^1([0,m],\R))\nonumber\\&\le \frac{C}{{\e^k}}\ln
\frac{1}{{\e^k}}.\nonumber
\end{align}This estimate contradicts Lemma \ref{L:Hk} and proves the theorem.
 \begin{remark}
The same proof works also in the case of Schr\"odinger equation with any finite number of controls:
 $$
 i\dot z    = -\Delta z+V(x)z+ u_1(t) Q_1(x)z+\ldots+ u_n(t)
 Q_n(x)z,
 $$ where $n\ge1$ is any integer, $Q_j \in
C^\ty(\overline{D},\R)$ are arbitrary functions and $u_j$ are
the controls
 $j=1,\ldots,n$.
 \end{remark}

\section{Appendix}

\subsection{Genericity of Condition \ref{C:p1}}\label{S:GEN}

Let us assume that $D=(0,1)^d$ and introduce the space
\begin{align*} \GG:=\{V\in
C^\ty(D,\R):V(x_1,&\dots,x_d)=V_1(x_1)+\ldots+V_d(x_d)\nonumber\\&\text{for
some $V_k\in C^\ty([0,1],\R),k=1,\ldots,d$}\}. \end{align*} Then
$\GG$, endowed with the metric of   $  C^\ty(\overline{D},\R)$, is a closed subspace in   $  C^\ty(\overline{D},\R)$. By Lemma 3.12 in \cite{VN}, the set $\aA$ of all functions $V\in \GG$ such that property (ii) in Condition \ref{C:p1} is verified  is   $G_\delta$ set (i.e.,   countable intersection of dense open sets).  
 First let us prove genericity of property (i) in the case $d=1$.
 \begin{lemma}\label{L:aap}
 For any $V\in C^\ty([0,1],\R)$, the set of functions $Q\in   C^\ty([0,1],\R)$ such that  
 \begin{equation}\label{E:yu}
 \inf_{p , j \ge1}|{{ p^3 j  ^3}}    \langle Qe_{p ,V } ,e_{j ,V } \rangle |>0\end{equation} is dense  in $C^\ty([0,1],\R)$.      
 \end{lemma}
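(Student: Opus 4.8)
The plan is to reduce the infimum condition (\ref{E:yu}) to a generic condition on the \emph{boundary derivatives} of $Q$, together with finitely many non‑degeneracy conditions, and then conclude by standard density arguments. Throughout write $e_p:=e_{p,V}$, $\la_p:=\la_{p,V}$ and $Q_{pj}:=\lag Qe_p,e_j\rag$. The starting point is the exact commutator (Wronskian) identity, valid for $p\neq j$,
\[
(\la_p-\la_j)\,Q_{pj}=-\int_0^1 Q'(x)\big(e_p'(x)e_j(x)-e_p(x)e_j'(x)\big)\,\dd x .
\]
It follows from the expansion $(-\p_x^2+V)(Qe_p)=-Q''e_p-2Q'e_p'+\la_p\,Qe_p$, the self‑adjointness of $-\p_x^2+V$ together with the Dirichlet conditions $e_p(0)=e_p(1)=0$ (which kill all boundary terms), and a symmetrization in $p,j$ that cancels the $\langle Q''e_p,e_j\rangle$ contribution.

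The first step is an asymptotic analysis of the right–hand side. Using (\ref{E:app1})--(\ref{E:app3}) to replace $e_p,e_j$ by $e_{p,0}=\sqrt2\sin(p\pi x)$ and $e_{j,0}=\sqrt2\sin(j\pi x)$, expanding the Wronskian, applying product‑to‑sum formulas and integrating by parts, the resulting oscillatory integrals localize at the endpoints $x=0,1$, yielding uniformly as $p+j\to\infty$, $p\neq j$,
\[
p^3j^3Q_{pj}=\phi(p,j)\,\big[(-1)^{p+j}Q'(1)-Q'(0)\big]\,(1+o(1)),
\qquad \phi(p,j):=\frac{4\,p^4j^4}{\pi^2(p^2-j^2)^2}.
\]
Since $(p^2-j^2)^2\le\max(p,j)^4$ one has $\phi(p,j)\ge\frac{4}{\pi^2}\min(p,j)^4\ge\frac{4}{\pi^2}$ for all $p\neq j$, while the bracket takes only the two values $\pm Q'(1)-Q'(0)$. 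Hence
\[
\liminf_{\substack{p+j\to\infty\\ p\neq j}}\big|p^3j^3Q_{pj}\big|
\ \ge\ \frac{4}{\pi^2}\,\min\big(|Q'(1)-Q'(0)|,\ |Q'(1)+Q'(0)|\big)
=\frac{4}{\pi^2}\,\big|\,|Q'(1)|-|Q'(0)|\,\big| .
\]
On the diagonal, (\ref{E:app2}) gives $Q_{pp}\to\int_0^1Q\,\dd x$, so $p^6Q_{pp}\to+\ty$ whenever $\int_0^1Q\,\dd x\neq0$. Thus the whole infimum in (\ref{E:yu}) is positive provided $|Q'(0)|\neq|Q'(1)|$, $\int_0^1Q\,\dd x\neq0$, and none of the finitely many remaining $Q_{pj}$ vanishes.

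The second step is density. Given $Q_0\in C^\ty([0,1],\R)$ and $\e>0$, first add a small smooth perturbation so that the new $Q$ satisfies $|Q'(0)|\neq|Q'(1)|$ and $\int_0^1Q\,\dd x\neq0$; these fail only on a closed nowhere‑dense set, so the correction is arbitrarily small in every $C^k$ norm. By the first step there are then $L>0$ and $N\in\N$ with $|p^3j^3Q_{pj}|\ge L$ for $\max(p,j)\ge N$. The finitely many pairs with $p,j<N$ impose the conditions $Q_{pj}\neq0$; each map $Q\mapsto Q_{pj}$ is a nonzero continuous linear functional on $C^\ty([0,1],\R)$, so $\{Q_{pj}=0\}$ is nowhere dense, and a further perturbation—small enough to preserve $|Q'(0)|\neq|Q'(1)|$, $\int_0^1Q\,\dd x\neq0$ and the floor $L$—makes all of them nonzero. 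The resulting $Q$ satisfies (\ref{E:yu}) and lies within $\e$ of $Q_0$, which proves density.

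The main obstacle is the uniformity of the $o(1)$ in the asymptotic of the first step, precisely in the regime where $p$ and $j$ are both large and comparable, so that $\la_p-\la_j$ is \emph{not} large. There the crude $L^\ty$ bounds on the correction $e_{p,V}-e_{p,0}$ supplied by (\ref{E:app2})--(\ref{E:app3}) are of the same order as the leading term, so one must instead exploit the oscillation of the products $e_{p,0}e_{j,0}$ against the slowly varying factor $Q'$ and extract the endpoint contributions after an additional integration by parts, keeping the explicit denominator $(\la_p-\la_j)$ throughout. By contrast, the complementary regime, where one index stays bounded, is already covered by the single‑sided integration by parts behind the estimate (\ref{E:rezag1}) in Lemma~\ref{L:gc1}.
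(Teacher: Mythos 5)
Your skeleton matches the paper's (endpoint term $\bigl[(-1)^{p+j}Q'(1)-Q'(0)\bigr]$ governs the off-diagonal entries, $\int_0^1Q\,\dd x\neq0$ handles the diagonal, then perturb), and the symmetrized Wronskian identity is a clean starting point (up to a harmless sign: comparison with the paper's exact computation for $V=0$, $Q=x^2$ shows it should read $(\la_p-\la_j)Q_{pj}=+\int_0^1Q'(e_p'e_j-e_j'e_p)\,\dd x$). But there is a genuine gap at the central step, and it is not only the unproven uniformity you flag: the Step 1 asymptotic is \emph{false} for general $V$ in precisely the regime you dismiss, namely $j$ fixed and $p\to\infty$. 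Carrying out your own scheme there, the exactly computable sine part gives, along each parity of $p$,
\begin{equation*}
p^3j^3Q_{pj}\ \longrightarrow\ c\,j^3\bigl[(-1)^{p}Q'(1)\,e_{j,V}'(1)-Q'(0)\,e_{j,V}'(0)\bigr],\qquad c\neq0,
\end{equation*}
i.e.\ the limit involves the \emph{exact} boundary derivatives of $e_{j,V}$, which (\ref{E:app3}) determines only up to $O(1)$ errors --- errors of the same size as the values $\sqrt2\,j\pi(\pm1)$ your universal bracket substitutes for them when $j$ stays bounded. Hence the bracket $[(-1)^{p+j}Q'(1)-Q'(0)]$ is not the right limit in that regime, your hypothesis $|Q'(0)|\neq|Q'(1)|$ does not prevent these limits from vanishing (whether they vanish depends on $V$), and your claim that this regime ``is already covered by the estimate (\ref{E:rezag1})'' is a non sequitur: (\ref{E:rezag1}) is an \emph{upper} bound and contributes nothing to the lower bound (\ref{E:yu}). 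The defect propagates to Step 2 and breaks it: since your asymptotic can only be valid as $\min(p,j)\to\infty$, the uncontrolled pairs are not finitely many --- they include all $(p,j)$ with one index below a threshold --- so a single small perturbation killing finitely many nonzero linear functionals cannot close the argument (note also that a perturbation $\de Q_1$ shifts $p^3j^3Q_{pj}$ by $\de\,p^3j^3(Q_1)_{pj}$, which is not uniformly small over infinitely many pairs).

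The paper is built to avoid exactly the step that fails for you: it never replaces $e_{p,V}$ by $e_{p,0}$ inside an integral. It iterates integration by parts through the eigenvalue equation, so the boundary term $Q'e_p'e_j'\big|_0^1/(\la_j-\la_p)^2$ is isolated while \emph{every} remaining integral carries an extra factor $(\la_j-\la_p)^{-1}$ (restricting to polynomial $Q$ makes the iteration on the $\lag Q''e_p,e_j\rag$-terms terminate); the estimates (\ref{E:app1})--(\ref{E:app3}) are applied only to boundary values, where $O(1)$ errors are harmless against the $pj$ growth; and the pairs where the boundary term does not dominate are handled by imposing the $G_\de$-dense condition $\lag Qe_p,e_j\rag\neq0$ for \emph{all} $p,j$ (Section 3.4 of \cite{VN}), i.e.\ by a Baire-category argument rather than a finite perturbation. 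To repair your route you would need either that genericity ingredient, or eigenfunction asymptotics finer than (\ref{E:app2})--(\ref{E:app3}) (the two-term oscillatory expansion of $e_{p,V}-e_{p,0}$ from \cite{PT}) together with the $V$-dependent non-degeneracy conditions $(-1)^{p}Q'(1)e_{j,V}'(1)\neq Q'(0)e_{j,V}'(0)$ for the bounded values of $j$; your suggested fix --- more integration by parts on the products $e_{p,0}e_{j,0}$ --- acts on the part of the integral you can already compute exactly and leaves untouched the correction terms, which are the actual source of the loss.
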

 \begin{proof} 
     If $V=0$, then 
  a straightforward calculation gives
\begin{equation*} \lag x^2 e_{p,0} , e_{j,0} \rag=\begin{cases} \frac{(-1)^{p+j}8pj}{\pi^2(p^2-j^2)^2} , & \text{if } p\neq j, \\ \frac{2}{3}-\frac{1}{ p^2\pi^2}, & \text{if } p=j,\end{cases}
\end{equation*} which implies (\ref{E:yu}) for $Q=x^2$ and $V=0$. In the general case, taking any $p\neq j$, we integrate by parts  (we write $\la_j, e_{j}$ and $z'', z'$ instead of $\la_{j,V}, e_{j,V}$ and $\frac{\dd^2z}{\dd x^2}, \frac{\dd z}{\dd x}$, respectively) 
\begin{align*}
 \lag Q e_{p} , e_{j}  \rag= & \frac{1}{\la_{j} } \lag  (-\frac{\dd ^2}{\dd x^2}+V)(Q e_{p}), e_{j} \rag\\=&\frac{1}{\la_{j}} (\lag -   Q '' e_{p}, e_{j}\rag +\lag  - Q'  e_{p}', e_{j}\rag+\la_{p}\lag   Q e_{p}, e_{j}\rag).
\end{align*}  This implies that
\begin{align}\label{E:hhsq}
 \lag Q e_{p}, e_{j}\rag&=   - \frac{1}{\la_{j}-\la_{p}} (\lag  Q''  e_{p} , e_{j}\rag +\lag    Q ' e_{p} ', e_{j}\rag) .
\end{align}
Again integrating by parts, we get
 \begin{align}\label{E:hhs}
 \lag    Q '     e_{p}', e_{j}\rag = &\frac{1}{\la_{j}} \lag  Q '     e_{p}', (-\frac{\dd ^2}{\dd x^2}+V) e_{j}\rag\nonumber\\ = &-\frac{1}{\la_{j}}  Q '     e_{p}'   e_{j}'\Big|_{x=0}^{x=1}+\frac{1}{\la_{j}} \lag (-\frac{\dd ^2}{\dd x^2}+V) (  Q '     e_{p}', e_{j}\rag  .
\end{align}
Notice that
 \begin{align*}   \lag (-\frac{\dd ^2}{\dd x^2}+V) ( Q '     e_{p}') , e_{j}\rag=&\lag  V     Q '     e_{p}' , e_{j}\rag + \lag  -     Q'''  e_{p}' , e_{j}\rag +\ \lag  -     Q''  e_{p}'' , e_{j}\rag\nonumber\\& +
  {\la_{p}}
 \lag   Q '     e_{p}', e_{j}\rag - \lag        Q'   (V   e_{p})', e_{j}\rag.
\end{align*}Replacing this into (\ref{E:hhs}), we get
\begin{align}\label{E:hhsa}
 \lag   Q '     e_{p}' , e_{j}\rag = &  \frac{1}{\la_{j}-\la_{p} } ( -  Q '     e_{p}'   e_{j}'\Big|_{x=0}^{x=1} + \lag  V     Q '     e_{p}' , e_{j}\rag+ \lag  -     Q'''  e_{p}' , e_{j}\rag  \nonumber\\&+ \lag  -     Q '' e_{p} '', e_{j}\rag    -\lag       Q '  (V   e_{p}  )', e_{j}\rag 
 ).
\end{align} Using (\ref{E:hhsq}) and  (\ref{E:hhsa}) and the fact that 
$$\lag  -   Q'' e_{p} '' , e_{j}\rag= -\lag  Q'' V e_{p}  , e_{j}\rag+\la_{p}\lag   Q ''  e_{p}, e_{j}\rag, $$ we obtain
\begin{align*}  \lag Q e_{p}, e_{j}\rag= &(- \frac{1}{\la_{j}-\la_{p}}  \lag  Q '' e_{p}, e_{j}\rag  -    \frac{\la_{p}}{(\la_{j}-\la_{p})^2}\lag   Q''   e_{p}, e_{j} \rag)  \nonumber\\&   -\frac{1}{(\la_{j}-\la_{p})^2}   ( -   Q '     e_{p}'  e_{j}'\Big|_{x=0}^{x=1}+ \lag  V          Q'       e_{p}', e_{j}\rag \nonumber\\& + \lag  -     Q'''  e_{p}' , e_{j}\rag  -\lag  Q'' V e_{p} , e_{j} \rag -\lag        Q'    (V   e_{p,} )', e_{j}\rag 
 ) \nonumber\\  
   =:&I_1+I_2.
\end{align*}Let $Q$ be such that $A:=     Q'(x)      \cos(p\pi x)     \cos(j\pi x)   \Big|_{x=0}^{x=1}\neq 0$. Clearly, this is verified for almost any $Q$, since $A$ depends only on the parity of $p$ and $j$. 
Let us choose  $Q$ such that $\lag Q e_{p}, e_{j}\rag \neq 0$ for all $p,j\ge1$; the set of such functions $Q$ is $G_\de$, by   Section 3.4 in \cite{VN}.
Using the estimates (\ref{E:app1})-(\ref{E:app3}), it is easy to see that $\inf_{p , j \ge1,p\neq j}|{{ p^3 j  ^3}}    I_2 |>0 $. Iterating the same arguments for $I_1$, we see that $\inf_{p , j \ge1,p\neq j}|{{ p^3 j  ^3}}   \langle Qe_{p ,V } ,e_{j ,V } \rangle   |>0 $ for almost any polynomial $Q$.

If $p=j$, using (\ref{E:app2}), we get
\begin{align*}  \lag Q e_{p} , e_{p}\rag= 2 \lag Q  ,  \sin^2  ( {p\pi} x  )\rag+ s_p,
 \end{align*}where $s_p\ri 0$. Thus
 \begin{align*}  \lag Q e_{p}, e_{p}\rag=  \lag Q  ,  1-\cos 2p\pi x\rag+ s_p=\int_0^1Q \dd x-\lag Q 
 , \cos 2p\pi x\rag+s_p.
 \end{align*}
 Taking $Q$ such that    $\int_0^1Q \dd x \neq 0$, we complete the proof of the lemma.
 
 \end{proof}
 Take any functions $Q_k\in C^\ty([0,1],\R)$, $k=1,\ldots, d$ in the dense set of Lemma \ref{L:aap} corresponding to $V_k\in C^\ty([0,1],\R)$, $k=1,\ldots, d$. Then $Q(x_1,\ldots, x_d)$ $:=Q_1(x_1)\cdot\ldots\cdot Q_d(x_d)$ satisfies (i) with $V(x_1,\ldots, x_d):=V_1(x_1)+\ldots+ V_d(x_d)$.

\subsection{Inverse mapping theorem for multifunctions}\label{S:IMT}

In this section, we recall the statement of the inverse mapping theorem for multivalued functions or multifunctions. We refer the reader to the paper \cite{KNJP} by Nachi and  Penot  for details and for a review of the literature on this subject.  

Let $X$ and $Y$ be Banach spaces. For any non-empty sets $C,D\subset X$, define the Hausdorff distance 
\begin{align}
d(x,D)&=\inf_{y\in D} \|x-y\|_X,\nonumber\\
e(C,D)&=\sup_{x\in C}d(x,D).\nonumber
\end{align}
We call a multifunction from $X$ to $Y$ any mapping $F$ from $X$ to $2^Y$.
\begin{definition}\label{D:1} A multifunction $F$   from an open set  $X_0\subset X$ to $Y$  is said to
be strictly differentiable at $(x_0, y_0)$ if there exists some continuous linear map $A : X \ri Y$ such that for
any $\e> 0$ there exist $\beta,\de > 0$ for which
\begin{equation} e(F(x)\cap B_Y(y_0,\beta)-A(x), F(x')-A(x'))\le \e  \|x-x'\|_X,\nonumber
\end{equation}
whenever $x,x'\in B(x_0,\de)$. The   map  $A$ is called a derivative of $F$ at $(x_0, y_0)$.
\end{definition} The following theorem is a generalization of the classical inverse function theorem to the case of multifunctions.
\begin{theorem}\label{T:KP}
Let $F$ be a multifunction  from an open set  $X_0\subset X$ to $Y$ with closed non-empty values. Suppose $F$ is strictly differentiable at $(x_0, y_0)\in  Gr(F)$, and
 some derivative $A$ of $ F$ at $(x_0, y_0)$ has a right inverse. Then   for
any neighborhood $U$ of  $ x_0$ there exists a neighborhood $ V$ of $y_0$ such that $V \subset F(U)$.
\end{theorem}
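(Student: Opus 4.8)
The plan is to prove the conclusion by a Newton-type iteration in which the right inverse of $A$ plays the role of the inverse derivative, exactly as in the single-valued inverse function theorem, with the strict-differentiability excess estimate replacing the usual control on the remainder. First I would fix a continuous linear map $B:Y\ri X$ with $A\circ B=\mathrm{id}_Y$ and set $\kappa=\|B\|$. Then I choose $\e>0$ so small that $q:=2\e\kappa<1$, and let $\beta,\de>0$ be the constants furnished by strict differentiability of $F$ at $(x_0,y_0)$ for this $\e$; shrinking $\de$, I may assume $B(x_0,\de)\subset X_0\cap U$. The goal is to produce, for every $y$ in a sufficiently small ball $V=B_Y(y_0,r)$, a point $x_\ast\in B(x_0,\de)$ with $y\in F(x_\ast)$.

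Given such a $y$, I would construct sequences $x_n\in X$ and $y_n\in F(x_n)$, starting from $(x_0,y_0)$, by the rule $x_{n+1}=x_n+B(y-y_n)$, so that $A(x_{n+1}-x_n)=y-y_n$. Assuming inductively that $x_n,x_{n+1}\in B(x_0,\de)$ and $y_n\in F(x_n)\cap B_Y(y_0,\beta)$, strict differentiability applied to the pair $(x_n,x_{n+1})$ gives $d(y_n-Ax_n,\,F(x_{n+1})-Ax_{n+1})\le\e\|x_{n+1}-x_n\|$; since this distance is an infimum, I may pick $y_{n+1}\in F(x_{n+1})$ realizing it up to a slack bounded by $\e\|x_{n+1}-x_n\|$. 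The identity $y-y_{n+1}=(y_n-Ax_n)-(y_{n+1}-Ax_{n+1})$ then yields $\|y-y_{n+1}\|\le 2\e\|x_{n+1}-x_n\|\le 2\e\kappa\|y-y_n\|=q\|y-y_n\|$, so the residuals $\|y-y_n\|$ contract geometrically and $\|x_{n+1}-x_n\|\le\kappa\|y-y_n\|$ is summable; hence $x_n$ is Cauchy with some limit $x_\ast$, and $y_n\ri y$.

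The step I expect to be the most delicate is the bookkeeping that keeps the whole orbit inside the region $B(x_0,\de)\times B_Y(y_0,\beta)$ where the excess estimate is licensed, since the estimate is only available there. The total displacement is controlled by $\|x_\ast-x_0\|\le\sum_n\|x_{n+1}-x_n\|\le\frac{\kappa}{1-q}\|y-y_0\|$, while $\|y_n-y_0\|\le\|y-y_0\|+\|y-y_n\|$. Choosing $r$ small enough that $\frac{\kappa}{1-q}r<\de$, and that the geometric residual bound forces every $y_n$ to stay in $B_Y(y_0,\beta)$, simultaneously closes the induction and guarantees $x_\ast\in B(x_0,\de)\subset U$. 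This choice of $r$ fixes the neighborhood $V=B_Y(y_0,r)$.

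Finally I would pass to the limit and conclude $y\in F(x_\ast)$, which is precisely where the closedness of the values of $F$ and the one-sided (excess) form of the estimate are needed. Applying strict differentiability once more to the pair $(x_n,x_\ast)$, with the known point $y_n\in F(x_n)\cap B_Y(y_0,\beta)$, produces $\tilde y_n\in F(x_\ast)$ with $\|\tilde y_n-(y_n-Ax_n+Ax_\ast)\|\le\e\|x_n-x_\ast\|+\sigma_n'$ for a slack $\sigma_n'$ I take tending to $0$. Since $x_n\ri x_\ast$ gives $Ax_n\ri Ax_\ast$ and $y_n\ri y$, the vector $y_n-Ax_n+Ax_\ast\ri y$ and the right-hand side tends to $0$, so $\tilde y_n\ri y$; as each $\tilde y_n$ lies in the closed set $F(x_\ast)$, the limit satisfies $y\in F(x_\ast)$. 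Therefore $V\subset F(B(x_0,\de))\subset F(U)$, which is the assertion of the theorem.
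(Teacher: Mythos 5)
The paper does not actually prove Theorem \ref{T:KP}: it is quoted verbatim from Nachi and Penot \cite{KNJP} (Theorem 22 there), and the text simply refers the reader to that paper for the proof. So there is no internal proof to compare against; your proposal supplies a self-contained argument where the article only gives a citation. Your argument is correct, and it is the natural one: a Newton--Lyusternik--Graves iteration $x_{n+1}=x_n+B(y-y_n)$, in which the excess estimate of Definition \ref{D:1} plays the role of the remainder bound in the classical inverse function theorem. The key identity $y-y_{n+1}=(y_n-Ax_n)-(y_{n+1}-Ax_{n+1})$ (valid because $A(x_{n+1}-x_n)=ABy-ABy_n=y-y_n$), the geometric contraction of the residuals with ratio $q=2\varepsilon\kappa<1$, the bookkeeping that keeps $(x_n,y_n)$ inside $B(x_0,\delta)\times B_Y(y_0,\beta)$ via the choice $\frac{\kappa}{1-q}r<\delta$ and $2r<\beta$, and the final passage to the limit using closedness of $F(x_\ast)$ are all sound, and this is exactly where the hypotheses (closed values, excess rather than full Hausdorff distance, right inverse of $A$) enter.

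One small point you should make explicit: selecting $y_{n+1}\in F(x_{n+1})$ ``up to a slack bounded by $\varepsilon\|x_{n+1}-x_n\|$'' is only possible when $x_{n+1}\neq x_n$, since the infimum defining $d(\cdot,\cdot)$ need not be attained and the allowed slack must be strictly positive. But this degenerate case is harmless: $B$ is injective (because $AB=\mathrm{id}_Y$), so $x_{n+1}=x_n$ forces $y=y_n\in F(x_n)$, and the iteration terminates with $x_\ast=x_n$. With that remark inserted, the induction is airtight and the proof is complete.
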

See Theorem 22 in \cite{KNJP} for the proof.
\addcontentsline{toc}{section}{References}

\end{document}